\newcommand{\HE}{Name of Handling Editor}
\newcommand{\DoS}{Month/Day/Year}
\newcommand{\DoA}{Month/Day/Year}
\newcommand{\CA}{Hermie Monterde}
\newcommand{\Names}{Hermie Monterde}
\newcommand{\Title}{Strong cospectrality and twin vertices in weighted graphs}
\newtheorem{example}[theorem]{Example}
\newcommand{\reals}{\mathbb{R}}
\newcommand{\complex}{\mathbb{C}}
\begin{document}

\bibliographystyle{plain}

\setcounter{page}{1}

\thispagestyle{empty}

 \title{\Title\thanks{Received
 by the editors on \DoS.
 Accepted for publication on \DoA. 
 Handling Editor: \HE. Corresponding Author: \CA}}

\author{
Hermie Monterde\thanks{Department of Mathematics,
University of Manitoba, Winnipeg, MB, Canada R3T 2N2,
(monterdh@myumanitoba.ca).}}

\markboth{\Names}{\Title}

\maketitle

\begin{abstract}
We explore algebraic and spectral properties of weighted graphs containing twin vertices that are useful in quantum state transfer. We extend the notion of adjacency strong cospectrality to Hermitian matrices, with focus on the generalized adjacency matrix and the generalized normalized adjacency matrix. We then determine necessary and sufficient conditions such that a pair of twin vertices in a weighted graph exhibits strong cospectrality with respect to the above-mentioned matrices. We also determine when strong cospectrality is preserved under Cartesian and direct products of graphs. Moreover, we generalize known results about equitable and almost equitable partitions, and use these to determine which joins of the form $X\vee H$, where $X$ is either the complete or empty graph, exhibit strong cospectrality.
\end{abstract}

\begin{keywords}
Strong cospectrality, Twin vertices, Graph spectra, Adjacency matrix, Graph Laplacians, Quantum state transfer.
\end{keywords}
\begin{AMS} 
05C50, 05C22, 81P45, 81A10.
\end{AMS}




\section{Introduction} \label{intro-sec}

In the field of quantum state transfer, various types of useful quantum phenomena occur between twin vertices in graphs. For instance, Angeles-Canul et al.\ showed that the apexes of a double cone on an $n$-vertex $k$-regular graph exhibit perfect state transfer with respect to the adjacency matrix for infinitely many $n$ and $k$ \cite{AC2}. For the Laplacian case, Alvir et al.\ proved that the disconnected double cone on an $n$-vertex graph (not necessarily regular) exhibits perfect state transfer between apexes if and only if $n\equiv 2$ (mod 4) \cite{Alvir2016}. The apexes of a double cone are also known to admit fractional revival with respect to the adjacency and Laplacian matrices \cite{Chan2020,Chan2021}. Moreover, it was shown by Kempton et al.\ that for two non-adjacent vertices with the same neighbourhoods, adding a suitable potential (weighted loops) on the vertices yields adjacency perfect state transfer between them \cite{Kempton}. A similar result also holds for double cones as shown by Angeles-Canul et al.: adjacency perfect state transfer can be achieved between the apexes by adding loops with suitable weights on the apexes, or by weighting the edges incident to the apexes \cite{AC1}.

Despite its presence in literature, the role of twin vertices in quantum state transfer has not yet been thoroughly explored. This motivates our work in providing a systematic approach to analyzing the algebraic and spectral properties of graphs containing twin vertices that have significant consequences in quantum state transfer. In this paper, we consider connected weighted undirected graphs with possible loops but no multiple edges. We focus primarily on the two types of matrices associated to graphs, namely, the generalized adjacency matrix, and the generalized normalized adjacency matrix, with emphasis on the generalized adjacency matrix in the last two sections. These matrices are generalizations of the four most common types of matrices associated to weighted graphs, namely, the adjacency matrix and graph Laplacians (Laplacian matrix, signless Laplacian matrix, and normalized Laplacian matrix).

This paper is organized as follows. In Section \ref{spectralproperties}, we mainly extend the notions of cospectral, parallel and strongly cospectral vertices to columns of Hermitian matrices with special attention to generalized adjacency and generalized normalized adjacency matrices. Our extension preserves a well-known fact relative to the adjacency matrix of a graph: two columns of a Hermitian matrix are strongly cospectral if and only if they are cospectral and parallel. We provide necessary and sufficient conditions for two columns of a Hermitian matrix to be cospectral and parallel. We also look at the spectral and algebraic properties of these matrices whenever the underlying graph contains twin vertices. In particular, we show that a weighted graph contains two twin vertices if and only if there is an involution that switches them and fixes all other vertices. An important consequence of this fact is that twin vertices are cospectral, and thus, twin vertices are strongly cospectral if and only if they are parallel. In Section \ref{strongcospectrality}, we present spectral properties of strongly cospectral vertices. We prove our main result, which is a characterization of twin vertices that are strongly cospectral and its implications on the columns of the orthogonal projection matrices in the spectral decompositions of the generalized adjacency and generalized normalized adjacency matrices. A characterization of strong cospectrality between columns of Hermitian matrices is also given. Moreover, we show that strong cospectrality in simple unweighted graphs is closed under complementation. In Section \ref{prod}, we determine under which conditions is strong cospectrality preserved under Cartesian and direct products. Section \ref{eqtp} is dedicated to equitable and almost equitable partitions. In particular, we establish that strong cospectrality between two vertices which are singleton cells in either an equitable or almost equitable partition is preserved in the quotient graph, and vice versa. We also extend a result of Bachman et al.\ in \cite[Theorem 2]{Bachman2011}. Finally, in Section \ref{joins}, we identify which joins of the form $X\vee H$, where $X$ is either a weighted complete graph or a weighted empty graph and $H$ is an $m$-vertex graph, exhibit strong cospectrality between the vertices of $X$. As it turns out, double cones, which are joins of this form, provide plenty of examples of strongly cospectral vertices.

Some results in this paper as well as an extensive treatment of twin vertices in quantum state transfer can be found in the work of Monterde \cite[MSc. Thesis]{Monterde}. For the basics of graph theory and matrix
theory, we refer the reader to Godsil and Royle \cite{Godsil:AlgebraicGraph}, and Horn and Johnson \cite{Horn:MatrixAnalysis,horn94}. For more background on quantum state transfer, see Godsil \cite{Godsil2012a}, and Coutinho and Godsil \cite{Coutinho2021}.

Throughout this paper, $X$ denotes a connected weighted undirected graph with possible loops but no multiple edges. The edges of $X$ have non-zero real weights, and we denote the vertex set of $X$ by $V(X)$. If $X$ is disconnected, then we apply our results to the components of $X$.  We say that $X$ is \textit{simple} if $X$ has no loops, $X$ is \textit{positively weighted} (resp., \textit{negatively weighted}) if all edge weights in $X$ are positive (respectively, negative), and $X$ is \textit{unweighted} if all edges of $X$ have weight one. If $X$ is simple and unweighted, then we denote the complement of $X$ by $\overline{X}$. For $u\in V(X)$, we denote the subgraph of $X$ induced by $V(X)\backslash\{u\}$ as $X\backslash u$, the set of neighbours of $u$ in $X$ as $N_X(u)$, and the characteristic vector of $u$ as $\textbf{e}_u$, which is a vector with a $1$ on the entry indexed by $u$ and $0$'s elsewhere. The all ones vector of order $n$, the zero vector of order $n$, the $n\times n$ all ones matrix, and the $n\times n$ identity matrix are denoted by $\textbf{1}_n$, $\textbf{0}_n$, $J_n$ and $I_n$, respectively. If the context is clear, then we write $\textbf{1}$, $\textbf{0}$, $J$ and $I$, respectively. If $Y$ is another graph, then we use $X\cong Y$ to denote the fact that $X$ and $Y$ are isomorphic, and adopt the notation $X\vee Y$ for the join of $X$ and $Y$. We also represent the conjugate transpose and the transpose of a matrix $M$ respectively by $M^*$ and $M^T$. Moreover, we write the characteristic polynomial of a square matrix $M$ in the variable $t$ as $\phi(M,t)$, and the spectrum of $M$ as $\sigma(M)$. Lastly, we denote the simple unweighted empty, cycle, complete and path graphs on $n$ vertices as $O_n$, $C_n$, $K_n$ and $P_n$, and the simple unweighted complete graph minus an edge by $K_n\backslash e$.

Two vertices $u$ and $v$ of $X$ are \textit{twins} if the following conditions hold.
\begin{enumerate}
\item $N_X(u)\backslash \{u,v\}=N_X(v)\backslash \{u,v\}$.
\item The edges $(u,w)$ and $(v,w)$ have the same weight for each $w\in N_X(u)\backslash \{u,v\}$.
\item If there are loops on $u$ and $v$, then they have the same weight.
\end{enumerate}
In addition, if $u$ and $v$ are adjacent, then $u$ and $v$ are \textit{true twins}. Otherwise, $u$ and $v$ are \textit{false twins}. Note that our definition generalizes the definition of twin vertices from simple unweighted graphs to weighted graphs with possible loops. In literature, false and true twins in simple unweighted graphs are also known as duplicates and co-duplicates, respectively \cite{Briffa2020}. Observe that if either $u$ and $v$ are false twins or if $u$ and $v$ are true twins with loops, then $N_X(u)=N_X(v)$, i.e., $u$ and $v$ have the same open and closed neighbourhoods, respectively. However, this is not true if either $u$ and $v$ are true twins without loops or if $u$ and $v$ are false twins with loops. We also add that if $u$ and $v$ are twins, then $\operatorname{deg}(u)=\operatorname{deg}(v)$, and if $X$ is simple and unweighted, then $u$ and $v$ are twins if and only if $N_X(u)\backslash \{v\}=N_X(v)\backslash \{u\}$.

Let $\omega,\eta\in\reals$. A subset $T=T(\omega,\eta)$ of $V(X)$ is a \textit{set of twins} in $X$ if any two vertices in $T$ are pairwise twins, where each vertex in $T$ has a loop of weight $\omega$, and the loops are absent if $\omega=0$, and every pair of vertices in $T$ are connected by an edge with weight $\eta$, and every pair of vertices in $T$ are not adjacent whenever $\eta=0$. Note that if $T$ is a set of twins in $X$, then either every pair of distinct vertices in $T$ are true twins, in which case $\eta\neq 0$, or every pair of distinct vertices in $T$ are false twins, in which case $\eta=0$. In particular, if $X$ is a simple unweighted graph, then $\omega=0$ and $\eta\in\{0,1\}$.

\begin{example}
In $K_n$, any pair of vertices are true twins. Meanwhile, in $K_n\backslash e$, the two non-adjacent vertices form a set of false twins, while the rest of the $n-2$ adjacent vertices form a set of true twins. 
\end{example}

The adjacency, Laplacian, signless Laplacian, and normalized Laplacian matrices of $X$, denoted $A(X)$, $L(X)$, $Q(X)$ and $\mathcal{L}(X)$, are fundamental in quantum state transfer because they serve as Hamiltonians for nearest-neighbour interactions of qubits in a quantum spin system represented by $X$. These Hamiltonians are then used to define continuous-time quantum walks on $X$ \cite{Alvir2016,Godsil2012a,Kendon2003,Kendon2011}. In general, a Hamiltonian is a Hermitian matrix associated to a graph that is known to have the property that its entry indexed by two vertices is zero if and only if there is no edge between them. For this reason, we consider generalizations of these four matrices that respect the adjacencies of the vertices in $X$, regardless of edge weights. To do this, we recall that the adjacency matrix $A(X)$ of $X$ is the matrix such that
\begin{equation*}
(A(X))_{j,\ell}=
\begin{cases}
 \omega_{j,\ell}, &\text{if $j$ is adjacent to $\ell$}\\
 0, &\text{otherwise},
\end{cases}
\end{equation*}
where $0\neq \omega_{j,\ell}\in\mathbb{R}$ is the weight of the edge $(j,\ell)$. The degree matrix $D(X)$ of $X$ is the diagonal matrix of vertex degrees of $X$, where $\operatorname{deg}(u)=2\omega_{u,u}+\sum_{j\neq u}\omega_{u,j}$ for each $u\in V(X)$. Note that $\operatorname{deg}(u)=0$ is possible without $u$ being an isolated vertex, and it can also happen that the vertex degrees in the graph can be all positive, or all negative, even if the edge weights themselves may have mixed signs. If $\operatorname{deg}(u)\neq 0$ for each $u\in V(X)$, then we define $D(X)^{-\frac{1}{2}}$ as the diagonal matrix such that $(D(X))_{u,u}=1/\sqrt{\operatorname{deg}(u)}$ if $\operatorname{deg}(u)>0$, while $(D(X))_{u,u}=-i/\sqrt{\operatorname{deg}(u)}$ if $\operatorname{deg}(u)<0$, where $i^2=-1$.

\begin{definition}
Let $\alpha,\beta,\gamma\in\reals$ with $\gamma\neq 0$.
\begin{enumerate}
\item A \textit{generalized adjacency matrix} $\textbf{A}(X)$ of $X$ is a matrix of the form
\begin{equation}
\label{gam}
\textbf{A}(X)=\alpha I+\beta D(X)+\gamma A(X).
\end{equation}
\item If either $\operatorname{deg}(u)>0$ for each $u\in V(X)$ or $\operatorname{deg}(u)<0$ for each $u\in V(X)$, then a \textit{generalized normalized adjacency matrix} $\mathcal{A}(X)$ of $X$ is a matrix of the form
\begin{equation}
\label{gnam}
\mathcal{A}(X)=\alpha I+\gamma D(X)^{-\frac{1}{2}}AD(X)^{-\frac{1}{2}}.
\end{equation} 
\end{enumerate}
\end{definition}

Generalized adjacency matrices were introduced by van Dam and Haemers in \cite{VanDam} as linear combinations of $I$, $J$, $D(X)$ and $A(X)$, with the goal of characterizing simple unweighted graphs that are determined by their spectrum. But since we deal with weighted graphs, we omit $J$ in our definition. If $\alpha=0$, $0\leq \beta\leq 1$, and $\gamma=1-\beta$, then our definition of a generalized adjacency matrix coincides with the matrix $A_{\beta}$ proposed by Nikiforov to merge the study of adjacency and signless Laplacian matrices \cite{Nikiforov}. We also note that in (\ref{gnam}), $X$ need be neither positively nor negatively weighted, and the connectedness of $X$ implies that the diagonal entries of $D(X)^{-\frac{1}{2}}$ are either all positive or all purely imaginary, and so $\mathcal{A}(X)$, like $\textbf{A}(X)$, is Hermitian.

We reserve the symbols $\alpha$, $\beta$ and $\gamma$ to denote the parameters of $\textbf{A}(X)$ and $\mathcal{A}(X)$ in (\ref{gam}) and (\ref{gnam}). Unless otherwise specified, we use $M(X)$ to denote $\textbf{A}(X)$ or $\mathcal{A}(X)$. If the context is clear, then we simply write our matrices as $M$, $A$, $L$, $Q$, $\mathcal{L}$, $\textbf{A}$, $\mathcal{A}$, and $D$ for brevity. Note that if $\alpha=\beta=0$ and $\gamma=1$, then $\textbf{A}=A$, while if $\alpha=0$, $\beta=1$ and $\gamma=\mp 1$, then $\textbf{A}=L$ or $Q$. Also, if $\alpha=1$ and $\gamma=-1$, then $\mathcal{A}=\mathcal{L}$. That is, $A$, $L$, and $Q$ are generalized adjacency matrices, while $\mathcal{L}$ is a generalized normalized adjacency matrix. It is also worth mentioning that $X\cong Y$ if and only if there exists a permutation matrix $P$ such that $M(X)=P^TM(Y)P$ whenever $M=\textbf{A}$ or $\mathcal{A}$. Lastly, if $X$ is a weighted $k$-regular graph, then $\alpha+(\beta+\gamma) k$ and $\alpha+\gamma$ are simple eigenvalues of $\textbf{A}$ and $\mathcal{A}$, resp., both with eigenvector $\textbf{1}$.


\section{Spectral and algebraic properties}\label{spectralproperties}

Let $H$ be an $m\times m$ Hermitian matrix. Then $\sigma(H)\subseteq \reals$ and $H$ admits a spectral decomposition
\begin{equation*}
H=\sum_{j}\lambda_jE_j
\end{equation*} 
where the $\lambda_j$'s are the distinct eigenvalues of $H$ and each $E_j$ is the orthogonal projection matrix onto the eigenspace associated with $\lambda_j$. Each $E_j$ can be uniquely determined by taking any orthonormal set of eigenvectors $\{\textbf{v}_1,\ldots\textbf{,v}_{\ell}\}$ for $H$ corresponding to the eigenvalue $\lambda_j$ and setting $E_j=\sum_{k=1}^{\ell}\textbf{v}_k\textbf{v}_k^*$.
The $E_j$'s are Hermitian, idempotent, and pairwise multiplicatively orthogonal. They also commute with $A$, and they sum to identity. In addition, if $f$ is a function that is analytic on its domain $D$ and $\lambda_j\in D$ for each $j$, then
\begin{equation}
\label{anal}
f(H)=\sum_{j}f(\lambda_j)E_j.
\end{equation}
For more about this fact, see \cite[Definition 6.2.4]{horn94}. In particular, if $p(x)$ is a polynomial satisfying $p(\lambda_j)=0$ for $j\neq \ell$ and $p(\lambda_{\ell})=1$, then $p(H)=\sum_jp(\lambda_j)E_j=E_{\ell}$. That is, each $E_j$ is a polynomial in $H$.

Let $u,v\in\{1,\ldots,m\}$ be columns of $H$. The \textit{eigenvalue support} $\sigma_u(H)$ of $u$ with respect to $H$ is the set $\sigma_u(H)=\{\lambda_j:E_j\textbf{e}_u\neq 0\}$. With respect to $H$, we say that $u$ and $v$ are
\begin{enumerate}
\item \textit{cospectral} if $(E_j)_{u,u}=(E_j)_{v,v}$ for each $j$,
\item \textit{parallel} if $E_j\textbf{e}_u$ and $E_j\textbf{e}_v$ are parallel vectors for each $j$, i.e., for each $j$, there exists $c\in\complex$ such that
\begin{equation}
\label{parallel}
E_j\textbf{e}_u=cE_j\textbf{e}_v,
\end{equation}
\item \textit{strongly cospectral} if for each $j$, (\ref{parallel}) holds for some $c\in\complex$ with $|c|=1$. 
\end{enumerate}
Note that cospectrality, parallelism and strong cospectrality with respect to $H$ are equivalence relations. If $H$ is real symmetric, then $u$ and $v$ are parallel with respect to $H$ if the constant $c$ in (\ref{parallel}) is real, and strongly cospectral with respect to $H$ if $E_j\textbf{e}_u=\pm E_j\textbf{e}_v$ for each $j$, in which case we define the sets
\begin{equation*}
\sigma_{uv}^+(H)=\{\lambda_j:E_j\textbf{e}_u=E_j\textbf{e}_v\}\ \text{and}\ \sigma_{uv}^-(H)=\{\lambda_j:E_j\textbf{e}_u=-E_j\textbf{e}_v\}.
\end{equation*}

Cospectrality is a concept that dates back to Schwenk \cite{Schwenk}. In contrast, strong cospectrality is a more recent concept that appeared in \cite[Lemma 11.1]{Godsil2012a}, and possibly earlier, but was only defined explicitly in \cite{God,Fan2013}. Parallelism was introduced by Godsil and Smith \cite{Godsil2017}. In this section, we extend these notions to any two columns of a Hermitian matrix, and generalize known results about adjacency and Laplacian strong cospectrality. In later chapters, we mostly deal with the case $H=M(X)$, where $M=\textbf{A}$ or $M=\mathcal{A}$. In Section \ref{eqtp}, we also handle the case when $H=\mathcal{M}(X/\pi)$, where $\mathcal{M}$ is a matrix associated to the quotient graph $X/\pi$. In particular, if $H=A(X)$, we use the terms adjacency strongly cospectral, adjacency cospectral, and adjacency parallel interchangeably with strongly cospectral with respect to $A(X)$, cospectral with respect to $A(X)$, and parallel with respect to $A(X)$, respectively. Similarly, for $H=L(X),\ Q(X)$, or $\mathcal{L}(X)$.

If $u$ and $v$ are cospectral with respect to $H$, then $\sigma_u(H)=\sigma_v(H)$, while if $u$ and $v$ are strongly cospectral with respect to a real symmetric $H$, then $\sigma_u(H)=\sigma_{uv}^+(H)\cup \sigma_{uv}^-(H)$. Strong cospectrality is relevant in quantum state transfer because it is a condition necessary for some quantum phenomena to occur between two vertices in a graph, such as pretty good state transfer \cite[Lemma 13.1]{Godsil2012a}, as well as Laplacian fractional revival \cite[Theorem 7.6]{Chan2020}. For more about adjacency strong cospectrality in unweighted graphs, see \cite{Godsil2017}.
 
We now provide a characterization of cospectrality between columns of Hermitian matrices which is based closely on \cite[Theorem 3.1]{Godsil2017}. For $S\subseteq\{1,\ldots,m\}$, we denote by $H[S]$ the submatrix of $H$ obtained by deleting its row and column indexed by elements in $S$ and $\phi_S(H,t):=\text{det}((tI-H)[S])$. If $S=\{u\}$, then we write $H[S]$ and $\phi_S(H,t)$ as $H[u]$ and $\phi_u(H,t)$, respectively.

\begin{theorem}
\label{cosp}
Let $H$ be an $m\times m$ Hermitian matrix. Then the elements of $\sigma_u(H)$ are the poles of $\phi_u(H,t)/\phi(H,t)$. Moreover, the following are equivalent.
\begin{enumerate}
\item $u$ and $v$ are cospectral with respect to $H$.
\item $(E_j)_{u,u}=(E_j)_{v,v}$ for each orthogonal projection matrix $E_j$ of $H$.
\item $\phi_u(H,t)=\phi_v(H,t)$.
\item $(f(H))_{u,u}=(f(H))_{v,v}$ for any function $f$ that is analytic on its domain $D$ and each eigenvalue of $H$ is contained in $D$.
\end{enumerate}
\end{theorem}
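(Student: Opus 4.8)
The plan is to derive everything from the resolvent identity. Writing the spectral decomposition $H=\sum_j\lambda_jE_j$ and using that the $E_j$ are pairwise orthogonal Hermitian idempotents summing to $I$, for $t\notin\sigma(H)$ one gets
\begin{equation*}
(tI-H)^{-1}=\sum_j\frac{1}{t-\lambda_j}E_j.
\end{equation*}
On the other hand, the adjugate formula for the inverse gives $\big((tI-H)^{-1}\big)_{u,u}=\phi_u(H,t)/\phi(H,t)$, since deleting row and column $u$ contributes the sign $(-1)^{u+u}=1$. Comparing the $(u,u)$ entries yields
\begin{equation*}
\frac{\phi_u(H,t)}{\phi(H,t)}=\sum_j\frac{(E_j)_{u,u}}{t-\lambda_j}.
\end{equation*}
Because each $E_j$ is a Hermitian idempotent, $(E_j)_{u,u}=\textbf{e}_u^*E_j^*E_j\textbf{e}_u=\|E_j\textbf{e}_u\|^2\ge 0$, and this vanishes exactly when $E_j\textbf{e}_u=\textbf{0}$. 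Hence $\lambda_j$ is a pole of $\phi_u(H,t)/\phi(H,t)$ if and only if $(E_j)_{u,u}\neq 0$, i.e., if and only if $\lambda_j\in\sigma_u(H)$, which is the first assertion.

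For the equivalences, statements (1) and (2) are literally the definition of cospectrality, so nothing is required there. To see (2)$\Leftrightarrow$(3), I would appeal to the uniqueness of the partial fraction expansion of the rational function $\phi_u(H,t)/\phi(H,t)$ displayed above: its residue at $\lambda_j$ is precisely $(E_j)_{u,u}$. Since $u$ and $v$ share the common denominator $\phi(H,t)$, the identity $\phi_u(H,t)=\phi_v(H,t)$ is equivalent to the equality of the two rational functions, which by uniqueness of residues is equivalent to $(E_j)_{u,u}=(E_j)_{v,v}$ for every $j$.

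Finally, for (2)$\Leftrightarrow$(4) I would invoke the analytic functional calculus (\ref{anal}), which gives $(f(H))_{u,u}=\sum_jf(\lambda_j)(E_j)_{u,u}$ and likewise for $v$; thus (2) immediately forces $(f(H))_{u,u}=(f(H))_{v,v}$. Conversely, since the $\lambda_j$ are distinct, Lagrange interpolation produces a polynomial $p_\ell$ with $p_\ell(\lambda_j)=\delta_{j\ell}$, so that $p_\ell(H)=E_\ell$ as already noted in the excerpt; as polynomials are entire, applying (4) to $f=p_\ell$ recovers $(E_\ell)_{u,u}=(E_\ell)_{v,v}$, giving (2).

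The only genuinely delicate point is the partial-fraction step: a priori $\phi(H,t)$ carries higher-order factors $(t-\lambda_j)^{m_j}$, so one must justify that $\phi_u(H,t)/\phi(H,t)$ collapses to a sum of simple poles with residues $(E_j)_{u,u}$. However, the resolvent identity already exhibits exactly this expansion, so uniqueness of partial fractions applies without further work, and everything else reduces to bookkeeping with the spectral decomposition.
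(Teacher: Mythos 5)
Your proposal is correct, and it follows essentially the same route the paper relies on: the paper states this theorem without a written proof, deferring to Godsil--Smith, and the key identity you use, $\bigl((tI-H)^{-1}\bigr)_{u,u}=\phi_u(H,t)/\phi(H,t)$ combined with the resolvent expansion $\sum_j (E_j)_{u,u}/(t-\lambda_j)$, is exactly the machinery the paper itself records later in equation (\ref{yo}). Your handling of the delicate points --- $(E_j)_{u,u}=\|E_j\textbf{e}_u\|^2\ge 0$ so that poles are detected by nonvanishing residues, and the Lagrange-interpolation recovery of $E_\ell$ from statement (4) --- is sound, so nothing further is needed.
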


The term \textit{cospectral} vertices comes from the fact that if $H=A(X)$ and $u$ and $v$ are vertices of $X$, then $\phi_u(A(X),t)=\phi_v(A(X),t)$ is equivalent to $\phi(A(X\backslash u),t)=\phi(A(X\backslash v),t)$ so that $u$ and $v$ are adjacency cospectral if and only if the vertex deleted graphs $X\backslash u$ and $X\backslash v$ have the same adjacency spectrum. However, we point out that this need not apply whenever $H=\textbf{A}(X)$ and $\beta\neq 0$, or $H=\mathcal{A}(X)$.

For $u,v\in \{1,\ldots,m\}$, the \textit{walk matrix} of $H$ relative to $u$ is
\begin{equation*}
W_u(H)=\left[\textbf{e}_u\quad H\textbf{e}_u\quad \ldots\quad H^{n-1}\textbf{e}_u\right].
\end{equation*}
In particular, we call $W_u(H)-W_v(H)$ and $W_u(H)+W_v(H)$ as the weighted walk matrices relative to $\textbf{e}_u-\textbf{e}_v$ and $\textbf{e}_u+\textbf{e}_v$, respectively. The column space of $W_u(H)$ is the $H$-invariant subspace of $\complex^n$ generated by $\textbf{e}_u$. It is in fact the $\complex[H]$-module generated by $\textbf{e}_u$, where $\complex[H]$ denotes the ring of polynomials in $H$ with complex coefficients. In particular, if $H$ is real symmetric, then the column space of $W_u(H)$ is the $H$-invariant subspace of $\mathbb{R}^n$ generated by $\textbf{e}_u$, and is an $\mathbb{R}[H]$-module generated by $\textbf{e}_u$. For brevity, we simply refer to the column space of $W_u(H)$ as the $H$-module generated by $\textbf{e}_u$. We say that the $H$-modules generated by $\textbf{e}_u-\textbf{e}_v$ and $\textbf{e}_u+\textbf{e}_v$ are orthogonal if and only if any two columns of $W_u(H)-W_v(H)$ and $W_u(H)+W_v(H)$ are orthogonal, i.e., $\langle H^j(\textbf{e}_u-\textbf{e}_v),H^{\ell}(\textbf{e}_u+\textbf{e}_v)\rangle=0$. Equivalently, $(E_j)_{u,u}+2\operatorname{Im}(E_j)_{u,v}-(E_j)_{v,v}=0$ for each $j$. In the context of simple unweighted graphs where $H=A(X)$, we call $W_u(A(X))$ as the \textit{walk matrix} relative to $\textbf{e}_u$, and we call the $A$-module generated by $\textbf{e}_u$ as a \textit{walk module}. For more about walk matrices and walk modules in simple unweighted graphs, see \cite{Coutinho2021}.

The following is another characterization of cospectral columns in real symmetric matrices, which can be considered as an extension of some results in Theorem 3.1 and Lemma 11.1 in \cite{Godsil2017}.

\begin{theorem}
\label{cosp}
Let $H$ be an $m\times m$ Hermitian matrix. The $H$-modules generated by $\textbf{e}_u-\textbf{e}_v$ and $\textbf{e}_u+\textbf{e}_v$ are orthogonal subspaces of $\mathbb{C}^m$ if and only if $(E_j)_{u,u}+2\operatorname{Im}(E_j)_{u,v}-(E_j)_{v,v}=0$ for each $j$. If the $H$-modules generated by $\textbf{e}_u-\textbf{e}_v$ and $\textbf{e}_u+\textbf{e}_v$ are orthogonal subspaces of $\mathbb{C}^m$, then $u$ and $v$ are cospectral with respect to $H$ if and only if $(E_j)_{u,v}\in\mathbb{R}$  for each $j$. If we add that $H$ is real symmetric, then the following are equivalent.
\begin{enumerate}
\item $u$ and $v$ are cospectral with respect to $H$.
\item The $H$-modules generated by $\textbf{e}_u-\textbf{e}_v$ and $\textbf{e}_u+\textbf{e}_v$ are orthogonal subspaces of $\mathbb{R}^m$.
\item \label{6} There is a real symmetric matrix $R$ such that $R^2=I$, $R\textbf{e}_u=\textbf{e}_v$, and $RH=HR$.
\end{enumerate}
\end{theorem}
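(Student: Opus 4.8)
The plan is to work throughout from the spectral decomposition $H=\sum_j\lambda_jE_j$ and the fact, recorded in the preamble, that each $E_j$ is a polynomial in $H$. The key preliminary observation is that the $H$-module generated by a vector $\textbf{z}$ equals $\operatorname{span}\{E_j\textbf{z}:E_j\textbf{z}\neq\textbf{0}\}$: each $E_j\textbf{z}$ lies in the module since $E_j\in\complex[H]$, and conversely $H^k\textbf{z}=\sum_j\lambda_j^kE_j\textbf{z}$ lies in the span of the $E_j\textbf{z}$. Applying this to $\textbf{z}=\textbf{e}_u\mp\textbf{e}_v$ identifies the two modules as $\operatorname{span}\{E_j(\textbf{e}_u-\textbf{e}_v)\}$ and $\operatorname{span}\{E_j(\textbf{e}_u+\textbf{e}_v)\}$.

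For the first equivalence, I would reduce orthogonality of the two modules to a per-eigenvalue condition. Since the $E_j$ are pairwise orthogonal and Hermitian, $\langle E_j\textbf{x},E_k\textbf{y}\rangle=\textbf{x}^*E_jE_k\textbf{y}=0$ whenever $j\neq k$, so the only cross-terms that can survive are the diagonal ones. Writing $\langle H^a(\textbf{e}_u-\textbf{e}_v),H^b(\textbf{e}_u+\textbf{e}_v)\rangle=\sum_j\lambda_j^{a+b}\langle E_j(\textbf{e}_u-\textbf{e}_v),E_j(\textbf{e}_u+\textbf{e}_v)\rangle$ and applying a Vandermonde argument to the distinct $\lambda_j$, orthogonality of the modules is equivalent to $\langle E_j(\textbf{e}_u-\textbf{e}_v),E_j(\textbf{e}_u+\textbf{e}_v)\rangle=0$ for every $j$. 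A direct entrywise expansion of this inner product, using $E_j^2=E_j=E_j^*$ and $(E_j)_{v,u}=\overline{(E_j)_{u,v}}$, rewrites it purely in terms of $(E_j)_{u,u}$, $(E_j)_{v,v}$ and $\operatorname{Im}(E_j)_{u,v}$, which is the stated characterization. The second equivalence is then immediate algebra: assuming module orthogonality, substituting the cospectrality relation $(E_j)_{u,u}=(E_j)_{v,v}$ into this per-$j$ identity leaves exactly $\operatorname{Im}(E_j)_{u,v}=0$, i.e.\ $(E_j)_{u,v}\in\reals$, and conversely.

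For the real symmetric case I would establish the cycle $(1)\Rightarrow(3)\Rightarrow(1)$ together with $(1)\Leftrightarrow(2)$. When $H$ is real the $E_j$ are real symmetric, so $(E_j)_{u,v}\in\reals$ automatically and the per-$j$ inner product above collapses to $(E_j)_{u,u}-(E_j)_{v,v}$; hence orthogonality of the real modules is equivalent to $(E_j)_{u,u}=(E_j)_{v,v}$ for all $j$, which is precisely cospectrality, giving $(1)\Leftrightarrow(2)$. For $(3)\Rightarrow(1)$, if such an $R$ exists then $RH=HR$ forces $R$ to commute with every $E_j$ (each being a polynomial in $H$); using $\textbf{e}_v=R\textbf{e}_u$, $R^T=R$ and $R^2=I$ yields $(E_j)_{v,v}=\textbf{e}_u^TRE_jR\textbf{e}_u=\textbf{e}_u^TE_j\textbf{e}_u=(E_j)_{u,u}$, so $u$ and $v$ are cospectral.

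The substantive direction, and the step I expect to be the main obstacle, is $(1)\Rightarrow(3)$: the explicit construction of the involution. Here I would build $R$ block-wise across the eigenspaces. Cospectrality gives $\|E_j\textbf{e}_u\|^2=(E_j)_{u,u}=(E_j)_{v,v}=\|E_j\textbf{e}_v\|^2$, so within the $\lambda_j$-eigenspace the images $E_j\textbf{e}_u$ and $E_j\textbf{e}_v$ have equal norm. I would let $R$ act on that eigenspace as the identity when these two vectors coincide (including when both vanish) and otherwise as the reflection across the hyperplane orthogonal to $E_j(\textbf{e}_u-\textbf{e}_v)$, which swaps the two equal-norm vectors. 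The resulting $R$ preserves each eigenspace, hence commutes with $H$; it is a real symmetric involution because each block is; and $R\textbf{e}_u=\sum_jE_j\textbf{e}_v=\textbf{e}_v$. The care required is to check that these block reflections assemble into a single globally defined symmetric operator commuting with $H$ and that the degenerate blocks are handled so that $R^2=I$ holds exactly; verifying that the localized reflection on the $\lambda_j$-eigenspace commutes with $E_j$ (so that $R=\sum_jR_jE_j$ is genuinely symmetric) is the technical heart of this step.
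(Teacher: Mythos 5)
The paper states this theorem without proof (it is offered as an extension of results of Godsil and Smith), so there is no in-paper argument to compare against; your outline follows the natural route, and its architecture --- identifying each module with $\operatorname{span}\{E_j\textbf{z}\}$, reducing orthogonality to the per-eigenvalue inner products by a Vandermonde argument, collapsing to $(E_j)_{u,u}-(E_j)_{v,v}$ in the real case, and closing the cycle $(1)\Rightarrow(3)\Rightarrow(1)$ with a Householder-type involution --- is sound. The step you single out as the technical heart of $(1)\Rightarrow(3)$ is actually the easiest part to make airtight: set $w_j=E_j(\textbf{e}_u-\textbf{e}_v)$, let $S=\{j:w_j\neq\textbf{0}\}$, and take $R=I-2P$ with $P=\sum_{j\in S}\|w_j\|^{-2}w_jw_j^T$. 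The $w_j$ lie in distinct eigenspaces and are pairwise orthogonal, so $P$ is an orthogonal projection and $R^2=I$; each $w_jw_j^T$ commutes with $H$ because $Hw_j=\lambda_jw_j$; and cospectrality gives $\|E_j\textbf{e}_u\|=\|E_j\textbf{e}_v\|$, whence $w_j^TE_j\textbf{e}_u=\frac{1}{2}\|w_j\|^2$ and $R\textbf{e}_u=\sum_j(E_j\textbf{e}_u-w_j\cdot[j\in S])=\sum_jE_j\textbf{e}_v=\textbf{e}_v$. No block-by-block assembly or separate commutation check is needed.

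The one place your write-up would not survive scrutiny is the claim that the entrywise expansion of $\langle E_j(\textbf{e}_u-\textbf{e}_v),E_j(\textbf{e}_u+\textbf{e}_v)\rangle$ ``is the stated characterization.'' The expansion gives $(E_j)_{u,u}-(E_j)_{v,v}+2i\operatorname{Im}(E_j)_{u,v}$ (up to the sign convention on the middle term): a complex number whose vanishing is equivalent to the \emph{two} real conditions $(E_j)_{u,u}=(E_j)_{v,v}$ and $(E_j)_{u,v}\in\reals$, not to the single real equation $(E_j)_{u,u}+2\operatorname{Im}(E_j)_{u,v}-(E_j)_{v,v}=0$ printed in the theorem, which can hold with both terms nonzero and cancelling. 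You should either record this as a typo in the statement (the middle term wants a factor of $i$, or the condition should be written as the conjunction of the two real equations) or justify the convention under which the printed equation is intended. The discrepancy propagates to the second sentence of the theorem: under the corrected reading, orthogonality of the modules already forces cospectrality and the realness of each $(E_j)_{u,v}$ simultaneously, so the asserted biconditional is vacuously true, whereas your derivation of it --- substituting $(E_j)_{u,u}=(E_j)_{v,v}$ into the single real identity to isolate $\operatorname{Im}(E_j)_{u,v}=0$ --- only makes sense under the literal printed equation. Everything else, including the equivalence of (1), (2), (3) in the real symmetric case, is correct as you have it.
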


We also mention a useful sufficient condition for two columns of a Hermitian matrix to be cospectral.

\begin{corollary}
\label{cospaut}
Let $H$ be a Hermitian matrix. If $P$ is a permutation matrix such that $H=P^THP$ and $P\textbf{e}_u=\textbf{e}_v$, then $u$ and $v$ cospectral with respect to $H$.
\end{corollary}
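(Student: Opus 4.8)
The plan is to exploit the fact, recalled earlier in this section, that each spectral idempotent $E_j$ of a Hermitian matrix is a polynomial in that matrix. First I would recast the hypothesis $H=P^THP$ in commutator form: since $P$ is a permutation matrix it is orthogonal, so $PP^T=I$, and multiplying $H=P^THP$ on the left by $P$ gives $PH=HP$. Thus $P$ commutes with $H$. Because every $E_j$ is a polynomial in $H$, $P$ must commute with each of them as well, i.e.\ $PE_j=E_jP$, which is equivalent to $P^TE_jP=E_j$ for every $j$.

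With this commutation in hand, the second step is a short computation of the diagonal entries. Using $\textbf{e}_v=P\textbf{e}_u$ together with the fact that $P$ is real (so $P^*=P^T$), I would write
\begin{equation*}
(E_j)_{v,v}=\textbf{e}_v^*E_j\textbf{e}_v=(P\textbf{e}_u)^*E_j(P\textbf{e}_u)=\textbf{e}_u^*\bigl(P^TE_jP\bigr)\textbf{e}_u=\textbf{e}_u^*E_j\textbf{e}_u=(E_j)_{u,u}
\end{equation*}
for every $j$. By the definition of cospectral columns, the equality $(E_j)_{u,u}=(E_j)_{v,v}$ for all $j$ is exactly the assertion that $u$ and $v$ are cospectral with respect to $H$, which finishes the argument.

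I do not anticipate a genuine obstacle; the only subtlety worth flagging is that the permutation $P$ need not be an involution, nor symmetric, so one cannot simply appeal to the real symmetric matrix $R$ furnished by item \ref{6} of the preceding theorem (which in any event presumes $H$ to be real symmetric). Working directly with the idempotents circumvents this entirely and keeps the proof valid for arbitrary Hermitian $H$. In essence, the whole content of the corollary is that a symmetry commuting with $H$ descends to each $E_j$, and diagonal entries of $E_j$ are preserved by conjugation with that symmetry.
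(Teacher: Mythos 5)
Your argument is correct: $PH=HP$ follows from $H=P^THP$ and $PP^T=I$, each $E_j$ is a polynomial in $H$ (as recalled at the start of Section \ref{spectralproperties}), so $P^TE_jP=E_j$ and the diagonal computation gives $(E_j)_{u,u}=(E_j)_{v,v}$. The paper states this corollary without proof, but your route is essentially the one it intends --- the same commutation argument applied to functions of $H$ underlies condition (4) of Theorem \ref{cosp} --- so nothing further is needed.
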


If there is an automorphism of $X$ that sends $u$ to $v$, then they are automatically cospectral. Now, if $\beta=0$, then $\phi_u(\textbf{A}(X),t)=\phi(\textbf{A}(X\backslash u),t)$, and so even if no automorphism sends $u$ to $v$, $X\backslash u\cong X\backslash v$ yields adjacency cospectrality between $u$ and $v$ (see Schwenk's tree in \cite{Godsil2017} for example). However, the converse is not true. Indeed, if $X\cong Y(1,-1)$ in Figure \ref{plantsie}, then vertices $u$ and $v$ marked blue are adjacency cospectral but $X\backslash u\not\cong X\backslash v$. While vertices $u$ and $v$ can be cospectral despite the absence of an automorphism that sends one to the other, Theorem \ref{cosp}(\ref{6}) guarantees a matrix $R$ that behaves like an involution that switches $u$ and $v$. However, as pointed out in \cite{Godsil2017}, $R$ need not be related to any automorphism of $X$.

Now, let $M=\textbf{A}$ with $\beta\neq 0$ or $M=\mathcal{A}$. Then $\phi_u(M(X),t)$ and $\phi(M(X\backslash u),t)$ need not be equal, i.e., $X\backslash u\cong X\backslash v$ does not guarantee cospectrality with respect to $M$. Indeed, if $X\cong T$ in Figure \ref{plantsie}, then $X\backslash u\cong X\backslash v$, but $u$ and $v$ are not cospectral with respect to $M=\textbf{A}$ whenever $\beta\neq 0$, as well as $M=\mathcal{A}$. As a result, if $X$ is fixed and $u$ and $v$ are cospectral with respect to $\textbf{A}$ for some $\alpha,\beta$ and $\gamma$, then $u$ and $v$ need not be cospectral with respect to $\textbf{A}$ when at least one of $\beta$ and $\gamma$ is changed. Nonetheless, since $M$ and $aI+bM$ have the same set of orthogonal projection matrices, cospectrality and parallelism are preserved with respect to $\textbf{A}$ if $\alpha$ is changed, and with respect to $\mathcal{A}$ if both $\alpha$ and $\gamma$ are changed.

Next, we state the following variations of Corollaries 6.3 and 6.4 in \cite{Godsil2017}, respectively.

\begin{theorem}
\label{paryup1}
Let $H$ be an $m\times m$ Hermitian matrix and $u,v\in\{1,\ldots,m\}$. If $u$ and $v$ are parallel with respect to $H$, $\sigma_u(H)=\sigma_v(H)$, and $R$ is a matrix such that $RH=HR$ and $R\textbf{e}_u=\textbf{e}_u$, then $R\textbf{e}_v=\textbf{e}_v$. 
\end{theorem}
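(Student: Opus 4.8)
The plan is to exploit the fact that $R$ commutes with the entire spectral decomposition of $H$. Write $H=\sum_j \lambda_j E_j$. Since each $E_j$ is a polynomial in $H$ (as noted after \eqref{anal}), the hypothesis $RH=HR$ upgrades to $RE_j=E_jR$ for every $j$. From $R\textbf{e}_u=\textbf{e}_u$ I would then immediately obtain, for each $j$,
\begin{equation*}
R(E_j\textbf{e}_u)=E_j(R\textbf{e}_u)=E_j\textbf{e}_u,
\end{equation*}
so that every projection $E_j\textbf{e}_u$ is fixed by $R$. The whole proof is about transferring this fixed-point property from the $u$-projections to the $v$-projections.

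To carry out the transfer I would split on whether $\lambda_j$ lies in the common support. If $\lambda_j\in\sigma_u(H)=\sigma_v(H)$, then both $E_j\textbf{e}_u\neq 0$ and $E_j\textbf{e}_v\neq 0$; parallelism gives $E_j\textbf{e}_u=cE_j\textbf{e}_v$ with necessarily $c\neq 0$, so $E_j\textbf{e}_v=c^{-1}E_j\textbf{e}_u$, and applying $R$ yields $R(E_j\textbf{e}_v)=c^{-1}R(E_j\textbf{e}_u)=c^{-1}E_j\textbf{e}_u=E_j\textbf{e}_v$. If instead $\lambda_j\notin\sigma_v(H)$, then $E_j\textbf{e}_v=0$ and the relation $R(E_j\textbf{e}_v)=E_j\textbf{e}_v$ holds trivially. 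In all cases $E_j\textbf{e}_v$ is fixed by $R$.

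Finally I would sum over $j$ and invoke completeness $\sum_j E_j=I$: since $R E_j=E_j R$,
\begin{equation*}
R\textbf{e}_v=R\Big(\sum_j E_j\textbf{e}_v\Big)=\sum_j R(E_j\textbf{e}_v)=\sum_j E_j\textbf{e}_v=\textbf{e}_v,
\end{equation*}
which is the desired conclusion.

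The one genuinely delicate point, and where I expect the main obstacle to lie, is the case distinction in the second paragraph. Parallelism as defined allows the degenerate situation $E_j\textbf{e}_u=0$ while $E_j\textbf{e}_v\neq 0$ (the zero vector is parallel to anything, so $c=0$ is admissible), and in that situation the argument would fail because $E_j\textbf{e}_v$ need not be $R$-fixed. The hypothesis $\sigma_u(H)=\sigma_v(H)$ is precisely what rules out this asymmetry, guaranteeing that $E_j\textbf{e}_u$ and $E_j\textbf{e}_v$ vanish simultaneously and that the scalar $c$ is invertible whenever both are nonzero. I would therefore emphasize in the write-up exactly where the equal-support assumption is used, since that is the crux distinguishing this statement from the purely parallel case.
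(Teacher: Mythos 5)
Your proof is correct. The paper itself states Theorem \ref{paryup1} without proof, presenting it only as a variation of Corollaries 6.3 and 6.4 of Godsil and Smith, so there is no in-text argument to compare against; your projection-by-projection argument is the natural one and is complete. You correctly identify the one delicate point: the definition of parallelism, $E_j\textbf{e}_u=cE_j\textbf{e}_v$, is asymmetric and would permit $E_j\textbf{e}_u=0$ with $E_j\textbf{e}_v\neq 0$ via $c=0$, and the equal-support hypothesis is exactly what forces the two projections to vanish simultaneously and makes $c$ invertible on the common support. A slightly shorter route, closer in spirit to how the paper organizes this material, is to invoke Theorem \ref{parchar}(\ref{p2}): parallelism together with $\sigma_u(H)=\sigma_v(H)$ says the $H$-modules generated by $\textbf{e}_u$ and $\textbf{e}_v$ coincide; since $R$ commutes with $H$ and fixes $\textbf{e}_u$, it fixes every $H^k\textbf{e}_u$ and hence acts as the identity on that module, which contains $\textbf{e}_v$. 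Both arguments use the same underlying mechanism (transporting the fixed-point property along the spectral projections), so the difference is one of packaging rather than substance.
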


\begin{corollary}
\label{paryup}
If $u$ and $v$ are parallel with respect to $M$ and $\sigma_u(H)=\sigma_v(H)$, then any automorphism of $X$ that fixes $u$ must fix $v$.
\end{corollary}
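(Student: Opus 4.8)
The plan is to derive this directly from Theorem \ref{paryup1} by converting the given automorphism into a permutation matrix that commutes with $M$. The key observation, already recorded in the introduction, is that whenever $M=\textbf{A}$ or $M=\mathcal{A}$, a graph isomorphism corresponds to a permutation matrix $P$ satisfying $M(X)=P^TM(Y)P$. Specializing to an automorphism $\sigma$ of $X$ produces a permutation matrix $P$ (the permutation representation of $\sigma$ acting on $V(X)$) with $M=P^TMP$. Since $P$ is a permutation matrix, it is real orthogonal, so $P^T=P^{-1}$, and the relation $M=P^TMP$ is equivalent to $PM=MP$. Hence $P$ commutes with $H:=M$.

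First I would translate the hypothesis that $\sigma$ fixes $u$ into the matrix statement $P\textbf{e}_u=\textbf{e}_{\sigma(u)}=\textbf{e}_u$. With this, all hypotheses of Theorem \ref{paryup1} are in place with $H=M$ and $R=P$: the columns $u$ and $v$ are parallel with respect to $M$ by assumption, $\sigma_u(M)=\sigma_v(M)$ by assumption, $R=P$ commutes with $M$, and $R\textbf{e}_u=\textbf{e}_u$. Invoking Theorem \ref{paryup1} then yields $R\textbf{e}_v=P\textbf{e}_v=\textbf{e}_v$.

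Finally I would read this conclusion back at the level of the graph: since $P\textbf{e}_v=\textbf{e}_{\sigma(v)}$, the equality $\textbf{e}_{\sigma(v)}=\textbf{e}_v$ forces $\sigma(v)=v$, i.e., $\sigma$ fixes $v$, which is precisely what the corollary asserts. I expect no serious obstacle here, since the entire substantive content is carried by Theorem \ref{paryup1}. The only points requiring care are the standard dictionary between automorphisms of $X$ and permutation matrices commuting with $M(X)$, and the harmless identification $H=M$ in the hypothesis $\sigma_u(H)=\sigma_v(H)$.
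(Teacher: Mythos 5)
Your proof is correct and is exactly the route the paper intends: the corollary is stated as an immediate consequence of Theorem \ref{paryup1}, obtained by taking $R$ to be the permutation matrix of the automorphism, which commutes with $M$ and satisfies $R\textbf{e}_u=\textbf{e}_u$. The translation between automorphisms and permutation matrices commuting with $M(X)$ is handled correctly, so there is nothing to add.
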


Thus, for two strongly cospectral vertices, any automorphism of $X$ that fixes one must fix the other.

Let $H$ be an $m\times m$ Hermitian matrix and $S\subseteq\{1,\ldots,m\}$. Denote by $H_S$ the submatrix of $H$ whose entries are indexed by elements in $S$. Following the proof of \cite[Theorem 4.5.1]{Coutinho2021}, we get that
\begin{equation}
\label{yo}
\text{det}\left((tI-H)^{-1}\right)_{S}=\frac{\phi_S(H,t)}{\phi(H,t)}.
\end{equation}

\begin{figure}[h!]
\begin{center}
		\begin{tikzpicture}
		\tikzset{enclosed/.style={draw, circle, inner sep=0pt, minimum size=.25cm}}
 		
 		\node[enclosed,fill=cyan] (y_1) at (1.2,1.15) {};
		\node[enclosed] (y_2) at (0.7,-0.3) {};
		\node[enclosed,fill=cyan] (y_3) at (2.3,1.15) {};
		\node[enclosed] (y_4) at (2.8,-0.3) {};
		
		\draw (y_1) -- node[left] {$a$} (y_2);
		\draw (y_1) -- node[below,right] {$a$} (y_4);
		\draw (y_2) -- node[below,left] {$b$} (y_3); 	
 		\draw (y_3) -- node[right] {$b$} (y_4); 
 		\draw (y_3) -- node[above] {$b$} (y_1);
		\draw (y_3) to[in=90,out=0,loop,looseness=10] node[above] {$a$} (y_3);
		\draw (y_1) to[in=90,out=180,loop,looseness=10] node[above] {$a$} (y_1);
		
		\node[enclosed] (x_1) at (4.9,1.1) {};
		\node[enclosed] (x_2) at (4.9,0) {};
		\node[enclosed] (x_3) at (6,0) {};
		\node[enclosed,fill=cyan,label={above, yshift=0cm: $u$}] (x_4) at (7.1,0) {};
		\node[enclosed] (x_5) at (8.2,0) {};
		\node[enclosed] (x_6) at (9.3,0) {};
		\node[enclosed,fill=cyan,label={above, yshift=0cm: $v$}] (x_7) at (10.4,0) {};
		\node[enclosed] (x_8) at (11.5,0) {};
		\node[enclosed] (x_9) at (8.2,1.1) {};
		\node[enclosed] (x_10) at (11.5,1.1) {};
		\node[enclosed] (x_11) at (12.6,0) {};
		
		\draw (x_1) -- (x_2);
		\draw (x_2) -- (x_3);
 		\draw (x_3) -- (x_4);
 		\draw (x_4) -- (x_5);
 		\draw (x_5) -- (x_6);
 		\draw (x_6) -- (x_7);
 		\draw (x_5) -- (x_9);
 		\draw (x_7) -- (x_8);
 		\draw (x_10) -- (x_8);
 		\draw (x_11) -- (x_8);
		\end{tikzpicture}
	\end{center}
	\caption{The weighted graph $Y(a,b)$ (left), and an unweighted tree $T$ with no automorphism mapping $u$ to $v$ but $X\backslash u\cong X\backslash v$ (right)}\label{plantsie}
\end{figure}
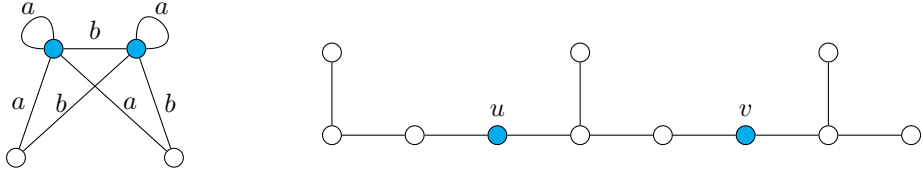

Our next result is a characterization of parallelism. We follow the work of Godsil and Smith \cite{Godsil2017}.

\begin{theorem}
\label{parchar}
Let $H$ be an $m\times m$ Hermitian matrix. The following hold.
\begin{enumerate}
\item \label{p1} $u$ and $v$ are parallel with respect to $H$ if and only if the poles of $\frac{\phi_S(H,t)}{\phi(H,t)}$ are simple, where $S=\{u,v\}$.
\item \label{p2} $u$ and $v$ are parallel with respect to $H$ and $\sigma_u(H)=\sigma_v(H)$ if and only if the $H$-modules generated by $\textbf{e}_u$ and $\textbf{e}_v$ are equal.
\end{enumerate}
\end{theorem}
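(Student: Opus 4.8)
The plan is to prove both parts from the spectral decomposition $H=\sum_j\lambda_jE_j$, using the resolvent $R(t)=(tI-H)^{-1}=\sum_j(t-\lambda_j)^{-1}E_j$ together with the identity (\ref{yo}), which for $S=\{u,v\}$ says that $\phi_S(H,t)/\phi(H,t)$ is exactly the $2\times 2$ principal minor $\det\big(R(t)_S\big)=R(t)_{u,u}R(t)_{v,v}-R(t)_{u,v}R(t)_{v,u}$. Since each entry $R(t)_{a,b}=\sum_j(t-\lambda_j)^{-1}(E_j)_{a,b}$ is a rational function with only simple poles, all located among the eigenvalues $\lambda_j$, the minor $\det\big(R(t)_S\big)$ has poles only at the $\lambda_j$, each of order at most two.

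For part (\ref{p1}) the key step is to extract the coefficient of $(t-\lambda_j)^{-2}$ in the Laurent expansion at a fixed $\lambda_j$. Writing $R(t)_{a,b}=(t-\lambda_j)^{-1}(E_j)_{a,b}+(\text{holomorphic at }\lambda_j)$ and multiplying out, this coefficient is
\begin{equation*}
(E_j)_{u,u}(E_j)_{v,v}-(E_j)_{u,v}(E_j)_{v,u}=\det\big((E_j)_S\big).
\end{equation*}
Because $E_j$ is an orthogonal projection it is positive semidefinite, and $(E_j)_{a,b}=\langle E_j\textbf{e}_a,E_j\textbf{e}_b\rangle$, so $(E_j)_S$ is the Gram matrix of the pair $E_j\textbf{e}_u,E_j\textbf{e}_v$. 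Its determinant vanishes precisely when these two vectors are linearly dependent, i.e. parallel. Hence every pole of $\phi_S(H,t)/\phi(H,t)$ is simple if and only if $\det\big((E_j)_S\big)=0$ for all $j$, which is exactly the assertion that $u$ and $v$ are parallel with respect to $H$.

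For part (\ref{p2}) I would first identify the $H$-module generated by $\textbf{e}_u$ (the column space of $W_u(H)$) with $\operatorname{span}\{E_j\textbf{e}_u:\lambda_j\in\sigma_u(H)\}$: each $E_j$ is a polynomial in $H$, so every $E_j\textbf{e}_u$ lies in the module, while $\textbf{e}_u=\sum_jE_j\textbf{e}_u$ and $H^k\textbf{e}_u=\sum_j\lambda_j^kE_j\textbf{e}_u$ show the module is contained in, hence equal to, that span. Since distinct $E_j$ have mutually orthogonal ranges, this is an orthogonal direct sum of one-dimensional spaces $\operatorname{span}(E_j\textbf{e}_u)$ indexed by $\sigma_u(H)$, and likewise for $v$. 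Intersecting each module with the eigenspace $\operatorname{range}(E_j)$ shows that the two modules coincide if and only if $\operatorname{span}(E_j\textbf{e}_u)=\operatorname{span}(E_j\textbf{e}_v)$ for every $j$, which is equivalent to requiring that $E_j\textbf{e}_u$ and $E_j\textbf{e}_v$ vanish for the same indices (that is, $\sigma_u(H)=\sigma_v(H)$) and are parallel whenever nonzero, i.e. parallelism together with equal supports.

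I expect the only delicate point to be the pole-order bookkeeping in part (\ref{p1}): one must confirm that the order-two coefficient is the sole obstruction to simplicity (no pole exceeds order two, and a vanishing order-two coefficient leaves at most a simple pole), and one must read parallelism as genuine \emph{linear dependence} of $E_j\textbf{e}_u$ and $E_j\textbf{e}_v$, allowing the degenerate case where one of them is zero, so that it aligns exactly with the vanishing of the positive-semidefinite Gram determinant $\det\big((E_j)_S\big)$. Once the module is split along the spectral projections, part (\ref{p2}) is essentially formal.
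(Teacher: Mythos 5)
Your proposal is correct. For part~(\ref{p2}) you take essentially the same route as the paper: identify the $H$-module generated by $\textbf{e}_u$ with the orthogonal span of the vectors $E_j\textbf{e}_u$, and then compare the two modules projection by projection; the paper's converse step (equating coefficients in two orthogonal expansions of the same vector) is just your ``intersect with $\operatorname{range}(E_j)$'' argument in different clothing. The real difference is in part~(\ref{p1}), where the paper simply cites \cite[Lemma 2.3]{CG} and you supply the proof: expanding $\det\bigl((tI-H)^{-1}_S\bigr)$ via the resolvent, extracting the $(t-\lambda_j)^{-2}$ Laurent coefficient as $\det\bigl((E_j)_S\bigr)$, and recognizing this as the Gram determinant of $E_j\textbf{e}_u$ and $E_j\textbf{e}_v$. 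That computation is right (the idempotence $E_j=E_j^*E_j$ is what makes $(E_j)_S$ a Gram matrix, and a $2\times 2$ minor of a function with simple poles can have poles of order at most two, so the vanishing of the order-two coefficient is exactly simplicity), and you are also right to flag the degenerate case: the Gram determinant vanishes when the two vectors are linearly dependent, which is the symmetric reading of ``parallel'' that the paper must intend, since it asserts parallelism is an equivalence relation. So your write-up proves strictly more than the paper's, at the cost of the pole-order bookkeeping you already identified as the only delicate point.
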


\begin{proof}
Note that 1 holds as a straightforward consequence of \cite[Lemma 2.3]{CG}. Now, let $H=\sum_{j=1}^r\lambda_j E_j$ be a spectral decomposition of $H$. For each $w$ in the column space of $W_u(H)$,
\begin{equation}
\label{11}
w=\sum_{\ell=1}^{n}a_{\ell}H^{\ell}\textbf{e}_u=\sum_{\ell=1}^n a_{\ell}\left(\sum_{j=1}^r\lambda_j^{\ell}E_j\right)\textbf{e}_u=\sum_{j=1}^r \left(\sum_{\ell=1}^n a_{\ell}\lambda_j^{\ell}\right)E_j\textbf{e}_u.
\end{equation}
Thus, $\{E_1\textbf{e}_u,\ldots,E_r\textbf{e}_u\}$ form an orthogonal basis for the $H$-module generated by $e_u$. If $u$ and $v$ are parallel with respect to $H$ and $\sigma_u(H)=\sigma_v(H)$, then the $H$-modules generated by $\textbf{e}_u$ and $\textbf{e}_v$ are equal. Conversely, if the $H$-modules generated by $\textbf{e}_u$ and $\textbf{e}_v$ are equal, then $\{E_1\textbf{e}_v,\ldots,E_r\textbf{e}_v\}$ is an orthogonal basis for the $H$-module generated by $e_u$. By (\ref{11}), $w=\sum_{j=1}^rb_jE_j\textbf{e}_u=\sum_{j=1}^rc_jE_j\textbf{e}_v$, and so $b_kE_k\textbf{e}_u=c_kE_k\textbf{e}_v$ for each $k\in\{1,\ldots,r\}$. That is, $u$ and $v$ are parallel with respect to $H$ and $\sigma_u(H)=\sigma_v(H)$. Hence, \ref{p2} holds.
\end{proof}

Let us now explore the connection between the degrees of cospectral vertices.

\begin{proposition}
\label{deg}
Let $u$ and $v$ be cospectral vertices in $X$ with respect to $M$.
\begin{enumerate}
\item If $M=\textbf{A}$, then $\beta\hspace{0.01in}\operatorname{deg}(u)+\gamma (A)_{u,u}=\beta\hspace{0.01in}\operatorname{deg}(v)+\gamma (A)_{v,v}$.
\item Let $M=\mathcal{A}$, Then $\operatorname{deg}(v) (A)_{u,u}=\operatorname{deg}(u) (A)_{v,v}$, and
\begin{equation}
\label{nl}
\sum_{j\in N_X(u)}((A)_{j,u})^2\frac{\operatorname{deg}(v)}{\operatorname{deg}(j)}=\sum_{j\in N_X(v)}((A)_{j,v})^2\frac{\operatorname{deg}(u)}{\operatorname{deg}(j)}.
\end{equation}
If we add that $X$ is unweighted, then
\begin{equation}
\label{unwnl}
\sum_{j\in N_X(u)}\frac{\operatorname{deg}(v)}{\operatorname{deg}(j)}=\sum_{j\in N_X(v)}\frac{\operatorname{deg}(u)}{\operatorname{deg}(j)},
\end{equation}
\end{enumerate} 
\end{proposition}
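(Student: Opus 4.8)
The plan is to exploit the fact that cospectrality of $u$ and $v$ with respect to $M$ forces $(g(M))_{u,u}=(g(M))_{v,v}$ for every polynomial $g$, and in particular $(M^k)_{u,u}=(M^k)_{v,v}$ for all $k\geq 0$. This is immediate from the definition of cospectrality together with the fact that each spectral idempotent $E_j$ is a polynomial in $M$: writing $M=\sum_j\lambda_j E_j$ gives $(M^k)_{u,u}=\sum_j\lambda_j^k(E_j)_{u,u}=\sum_j\lambda_j^k(E_j)_{v,v}=(M^k)_{v,v}$ (this is also Theorem \ref{cosp}(4) applied to $f(x)=x^k$). I will then simply read off the required identities from the diagonal entries of the first two powers of the relevant matrix.

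For Case 1 I take $k=1$. Since $\textbf{A}=\alpha I+\beta D+\gamma A$, the diagonal entry is $(\textbf{A})_{u,u}=\alpha+\beta\operatorname{deg}(u)+\gamma(A)_{u,u}$, so equating $(\textbf{A})_{u,u}=(\textbf{A})_{v,v}$ and cancelling $\alpha$ yields $\beta\operatorname{deg}(u)+\gamma(A)_{u,u}=\beta\operatorname{deg}(v)+\gamma(A)_{v,v}$. No higher power is needed, so this case is immediate. For Case 2 I set $B=D^{-\frac{1}{2}}AD^{-\frac{1}{2}}$, so that $\mathcal{A}=\alpha I+\gamma B$. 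Because $\mathcal{A}$ and $B$ differ only by a scalar shift and a nonzero scaling, they share the same spectral idempotents, so $u$ and $v$ are cospectral with respect to $\mathcal{A}$ if and only if they are cospectral with respect to $B$; hence $(B^k)_{u,u}=(B^k)_{v,v}$ for all $k$. Using $(D^{-\frac{1}{2}})_{w,w}^2=1/\operatorname{deg}(w)$ (which holds in both the real and the purely imaginary cases), the $k=1$ relation reads $(A)_{u,u}/\operatorname{deg}(u)=(A)_{v,v}/\operatorname{deg}(v)$, and clearing denominators gives $\operatorname{deg}(v)(A)_{u,u}=\operatorname{deg}(u)(A)_{v,v}$. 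For $k=2$ I expand $(B^2)_{u,u}=\sum_j(B)_{u,j}(B)_{j,u}=\sum_j|(B)_{u,j}|^2$, using that $B$ is Hermitian, and then $|(B)_{u,j}|^2=((A)_{j,u})^2/(|\operatorname{deg}(u)|\,|\operatorname{deg}(j)|)$, the index $j$ ranging exactly over the support of column $u$ of $A$, i.e.\ over $N_X(u)$.

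The one point requiring care—and the main (mild) obstacle—is the sign bookkeeping coming from the possibly purely imaginary diagonal of $D^{-\frac{1}{2}}$, which is why the $k=2$ computation naturally produces $|\operatorname{deg}(\cdot)|$ rather than $\operatorname{deg}(\cdot)$. Equating $(B^2)_{u,u}=(B^2)_{v,v}$ and multiplying through by $|\operatorname{deg}(u)|\,|\operatorname{deg}(v)|$ gives $|\operatorname{deg}(v)|\sum_j ((A)_{j,u})^2/|\operatorname{deg}(j)|=|\operatorname{deg}(u)|\sum_j((A)_{j,v})^2/|\operatorname{deg}(j)|$. Here I invoke the standing hypothesis for $\mathcal{A}$ that all vertex degrees share a common sign $\epsilon\in\{+1,-1\}$; writing $|\operatorname{deg}(w)|=\epsilon\operatorname{deg}(w)$ throughout, the factors of $\epsilon$ cancel (one from $|\operatorname{deg}(v)|$ or $|\operatorname{deg}(u)|$ against one from $|\operatorname{deg}(j)|$ in each term), so the absolute values may be dropped to recover (\ref{nl}) after absorbing $\operatorname{deg}(v)$ and $\operatorname{deg}(u)$ into the respective sums over $N_X(u)$ and $N_X(v)$. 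Finally, specializing to $(A)_{j,u}=(A)_{j,v}=1$ in the unweighted case immediately gives (\ref{unwnl}).
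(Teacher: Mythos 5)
Your proposal is correct and follows exactly the paper's argument: cospectrality gives $(M^k)_{u,u}=(M^k)_{v,v}$ via Theorem \ref{cosp}(4) with $f(x)=x^k$, and the identities are read off from $k=1$ for $\textbf{A}$ and $k=1,2$ for $\mathcal{A}$. The paper states this in three lines without expanding the diagonal entries; your careful handling of the possibly purely imaginary entries of $D^{-\frac{1}{2}}$ and the cancellation of the common degree sign is exactly the bookkeeping the paper leaves implicit.
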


\begin{proof}
Consider the analytic function $f(x)=x^k$ for any integer $k\geq 0$. Since $u$ and $v$ are cospectral with respect to $M$, Proposition \ref{cosp}(4) implies that $(M^k)_{u,u}=(M^k)_{v,v}$.
If $M=\textbf{A}$, then taking $k=1$ in the preceding equation proves (1), while if $M=\mathcal{A}$, then taking $k=1,2$ proves (2).
\end{proof}

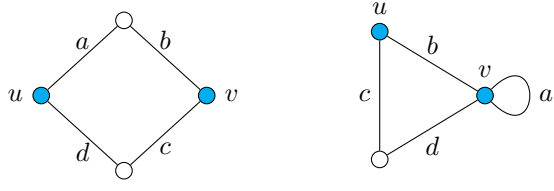
\begin{figure}[h!]
	\begin{center}
		\begin{tikzpicture}
		\tikzset{enclosed/.style={draw, circle, inner sep=0pt, minimum size=.22cm}}
 		
 		\node[enclosed,fill=cyan,label={left, yshift=0cm: $u$}] (y_1) at (0,0) {};
		\node[enclosed] (y_2) at (1.1,1) {};
		\node[enclosed] (y_3) at (1.1,-1) {};
		\node[enclosed,fill=cyan,label={right, yshift=0cm: $v$}] (y_4) at (2.2,0) {};
		
		\draw (y_1) -- node[above] {$a$} (y_2);
		\draw (y_2) -- node[above] {$b$} (y_4);
		\draw (y_3) -- node[below] {$c$} (y_4); 	
 		\draw (y_3) -- node[below] {$d$} (y_1); 
 		
 		\node[enclosed,fill=cyan,label={above, yshift=0cm: $u$}] (x_1) at (4.5,0.85) {};
		\node[enclosed] (x_3) at (4.5,-0.85) {};
		\node[enclosed,fill=cyan,label={above, yshift=0cm: $v$}] (x_4) at (5.9,0) {};
		
		\draw (x_4) to[in=45,out=-45,loop,looseness=15] node[right] {$a$} (x_4);
		\draw (x_3) -- node[below] {$d$} (x_4); 	
 		\draw (x_3) -- node[left] {$c$} (x_1); 
 		\draw (x_4) -- node[above] {$b$} (x_1); 
		\end{tikzpicture}
	\end{center}
	\caption{Weighted cycles $C_4(a,b,c,d)$ (left) and $C_3(a,b,c,d)$ (right)}\label{cycle}
\end{figure}

Let $u$ and $v$ be cospectral with respect to $M$. First, suppose $M=\textbf{A}$. If $\beta=0$, then the loops on $u$ and $v$ have equal weights. In particular, if $X$ is unweighted, then $((A)_{u,u})^2=\operatorname{deg}(u)-(A)_{u,u}$, and so $\operatorname{deg}(u)=\operatorname{deg}(v)$. But if $\beta\neq 0$, then $\operatorname{deg}(u)=\operatorname{deg}(v)$ if and only if the loops on $u$ and $v$ have equal weights. In particular, if $X$ has no loops, then $\operatorname{deg}(u)=\operatorname{deg}(v)$. Consequently, adjacency, Laplacian and signless Laplacian cospectral vertices in simple unweighted graphs have equal degrees. However, we note that it is possible for adjacency cospectral vertices in weighted graphs to have unequal degrees. For example, $X\cong C_4(-1,1,1,-1)$ in Figure \ref{cycle} has adjacency cospectral vertices $u$ and $v$ but $\operatorname{deg}(u)=-2$ while $\operatorname{deg}(v)=2$.

Now, let $M=\mathcal{A}$. If $\operatorname{deg}(u)=\operatorname{deg}(v)$, then the loops on $u$ and $v$ have equal weights. Conversely, if the loops on $u$ and $v$ have equal nonzero weights, then $\operatorname{deg}(u)=\operatorname{deg}(v)$. Moreover, one can use (\ref{unwnl}) to show that two given vertices are not cospectral with respect to $\mathcal{A}$. For instance, since a leaf and a vertex of degree three of $T$ in Figure \ref{plantsie} do not satisfy (\ref{unwnl}), they are not cospectral with respect to $\mathcal{A}$. Meanwhile, vertices $u$ and $v$ of $T$ in Figure \ref{plantsie} satisfy (\ref{unwnl}), but they are not cospectral with respect to $\mathcal{A}$.

We now give a lower bound on the sizes of eigenvalue supports of arbitrary vertices in connected weighted graphs. We denote the largest and smallest eigenvalues of $M$ by $\lambda_{\max}$ and $\lambda_{\min}$, respectively. 

\begin{proposition}
\label{esupp}
Let $H$ be an $m\times m$ irreducible Hermitian matrix. Then $|\sigma_u(H)|\geq 2$ for all $u\in\{1,\ldots,m\}$. In particular, $|\sigma_u(M)|\geq 2$, and the following hold.
\begin{enumerate}
\item \label{posneg} Let $\beta\geq 0$ and $\gamma>0$. If $X$ is positively weighted (resp., negatively weighted), then $\lambda_{\max}\in\sigma_u(M)$ (resp., $\lambda_{\min}\in\sigma_u(M)$). If $u$ and $v$ are strongly cospectral with respect to $M$, then $\lambda_{\max}\in \sigma_{uv}^+(M)$ (resp., $\lambda_{\min}\in \sigma_{uv}^+(M)$).
\item \label{yehh}
\begin{enumerate}
\item If $X$ is weighted $k$-regular, then $\alpha+(\beta+\gamma) k\in\sigma_u(\textbf{A})$ and $\alpha+\gamma\in\sigma_u(\mathcal{A})$. If $u$ and $v$ are strongly cospectral with respect to $\textbf{A}$ and $\mathcal{A}$, then $\alpha+(\beta+\gamma) k\in\sigma_{uv}^+(\textbf{A}(X))$ and $\alpha+\gamma\in\sigma_{uv}^+(\mathcal{A})$.
\item If $\beta=-\gamma$, then $\alpha\in\sigma_u(\textbf{A})$, while if $\alpha=-\gamma$, then $0\in\sigma_u(\mathcal{A})$. If $u$ and $v$ are strongly cospectral with respect to $\textbf{A}$ and $\mathcal{A}$, then $\alpha\in\sigma_{uv}^+(\textbf{A})$ and $0\in\sigma_{uv}^+(\mathcal{A})$.
\end{enumerate}
\end{enumerate} 
\end{proposition}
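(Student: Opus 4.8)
The plan is to prove the general bound $|\sigma_u(H)|\ge 2$ first, deduce $|\sigma_u(M)|\ge 2$ from the connectedness of $X$, and then handle the three support-membership claims by exhibiting in each case an eigenvector whose entries never vanish. For the general bound, recall that the orthogonal projections satisfy $\sum_j E_j=I$, so $\sum_j E_j\textbf{e}_u=\textbf{e}_u\neq \textbf{0}$ and hence $|\sigma_u(H)|\ge 1$. If $|\sigma_u(H)|=1$, then exactly one term survives and $\textbf{e}_u=E_j\textbf{e}_u$, so $\textbf{e}_u$ is an eigenvector of $H$; equivalently the $u$-th column $H\textbf{e}_u$ is a scalar multiple of $\textbf{e}_u$, forcing $H_{w,u}=0$ for all $w\neq u$, and by Hermiticity $H_{u,w}=\overline{H_{w,u}}=0$ as well. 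This exhibits $\{u\}$ as a block disconnected from its complement, contradicting irreducibility (for $m\ge 2$). Since $X$ is connected and $\gamma\neq 0$, the off-diagonal entries of $M$ vanish exactly on non-edges, so $M$ is irreducible and $|\sigma_u(M)|\ge 2$.

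For part 1 the tool is the Perron--Frobenius theorem. Take $M=\textbf{A}$ with $X$ positively weighted, $\beta\ge 0$, $\gamma>0$: then $\textbf{A}$ is real symmetric with nonnegative off-diagonal entries, so $\textbf{A}+cI$ is entrywise nonnegative and irreducible for large $c$, and Perron--Frobenius yields that $\lambda_{\max}$ is a simple eigenvalue with a strictly positive eigenvector $\textbf{w}$. If instead $X$ is negatively weighted, the off-diagonal entries are nonpositive, and applying the same reasoning to $-\textbf{A}$ makes $\lambda_{\min}$ the simple Perron value, again with a positive eigenvector. In both situations $E_0=\textbf{w}\textbf{w}^*/\|\textbf{w}\|^2$ for the extreme eigenvalue $\lambda_0$, so $E_0\textbf{e}_u=(w_u/\|\textbf{w}\|^2)\textbf{w}\neq \textbf{0}$ and $\lambda_0\in\sigma_u(\textbf{A})$. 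For $M=\mathcal{A}$ one first checks that $D^{-\frac12}AD^{-\frac12}$ has positive off-diagonal entries --- the two factors $-i$ arising from negative degrees cancel the sign of the edge weights --- so $\mathcal{A}$ is again of Metzler type and the Perron argument applies. I expect this sign/phase accounting, namely reconciling the imaginary normalization of $\mathcal{A}$ with the negatively weighted case and deciding which of $\lambda_{\max},\lambda_{\min}$ is the Perron value, to be the main obstacle of the whole proposition.

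For part 2, I would instead write down an explicit eigenvector. If $X$ is weighted $k$-regular, then $\textbf{A}\,\textbf{1}=(\alpha+(\beta+\gamma)k)\textbf{1}$ and $\mathcal{A}\,\textbf{1}=(\alpha+\gamma)\textbf{1}$, as recorded in the introduction; if $\beta=-\gamma$ then $\textbf{A}=\alpha I-\gamma L$ together with $L\textbf{1}=\textbf{0}$ gives $\textbf{A}\,\textbf{1}=\alpha\textbf{1}$; and if $\alpha=-\gamma$ then $\mathcal{A}=-\gamma\mathcal{L}$ together with $\mathcal{L}(D^{\frac12}\textbf{1})=\textbf{0}$ gives $\mathcal{A}(D^{\frac12}\textbf{1})=\textbf{0}$. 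In each case the relevant eigenvalue $\lambda_0$ has an eigenvector $\textbf{z}$ (namely $\textbf{1}$ or $D^{\frac12}\textbf{1}$) all of whose entries are nonzero. Letting $E_0$ be the projection onto the $\lambda_0$-eigenspace (no simplicity needed), Hermiticity gives $\langle E_0\textbf{e}_u,\textbf{z}\rangle=\langle \textbf{e}_u,E_0\textbf{z}\rangle=z_u\neq 0$, so $E_0\textbf{e}_u\neq\textbf{0}$ and $\lambda_0\in\sigma_u(M)$.

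Finally, the strong-cospectrality refinements follow by the same mechanism, since all matrices here are real symmetric, so strong cospectrality means $E_0\textbf{e}_u=\pm E_0\textbf{e}_v$ and it remains only to rule out the minus sign. In part 1 the eigenspace is spanned by the positive vector $\textbf{w}$, so $E_0\textbf{e}_u=(w_u/w_v)E_0\textbf{e}_v$; cospectrality gives $(E_0)_{u,u}=(E_0)_{v,v}$, that is $w_u^2=w_v^2$, and positivity forces $w_u=w_v$, whence the constant is $+1$ and $\lambda_0\in\sigma_{uv}^+(M)$. In part 2, pairing $E_0\textbf{e}_u=\pm E_0\textbf{e}_v$ with $\textbf{z}$ yields $z_u=\pm z_v$; since the entries of $\textbf{z}$ all share the same sign (or the same phase, in the purely imaginary case), the minus sign is impossible and again $\lambda_0\in\sigma_{uv}^+(M)$. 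These closing steps are short once the correct distinguished eigenvector has been identified.
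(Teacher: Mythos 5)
Your proposal is correct and follows the same route as the paper's (very terse) proof: the identical contradiction argument for $|\sigma_u(H)|\geq 2$, Perron--Frobenius for part 1, and the distinguished eigenvectors $\textbf{1}$ and $D^{\frac{1}{2}}\textbf{1}$ for part 2. In fact you supply several details the paper omits entirely: the paper's proof of part 1 is the single sentence ``Applying the Perron Frobenius Theorem yields (1),'' it gives no argument at all for the $\sigma_{uv}^+$ refinements, and for part 2 it asserts simplicity of the relevant eigenvalues, whereas your pairing $\langle E_0\textbf{e}_u,\textbf{z}\rangle=z_u\neq 0$ cleanly avoids needing simplicity. The one point where you hedge is genuinely the sore spot, and your instinct is right: for $M=\mathcal{A}$ on a negatively weighted graph, all degrees are negative, $D^{-\frac{1}{2}}$ is purely imaginary, and the factor $(-i)^2=-1$ flips the (negative) edge weights, so $D^{-\frac{1}{2}}AD^{-\frac{1}{2}}$ has \emph{positive} off-diagonal entries and Perron--Frobenius hands you $\lambda_{\max}\in\sigma_u(\mathcal{A})$ --- not $\lambda_{\min}$ as the statement asserts for the negatively weighted case. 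Your method proves the correct conclusion there; the discrepancy lies with the statement (which appears to be calibrated to $\textbf{A}$, where no sign flip occurs), and the paper's one-line proof simply does not engage with it. Everything else in your write-up, including the positivity argument forcing the constant $c=+1$ in the strong cospectrality refinements, is sound.
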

\begin{proof}
To prove (1), suppose $\sigma_u(H)$ has only one element, say $\lambda_1$. Since the $E_j$'s sum to identity, $\textbf{e}_u=\sum_{j}E_{j}\textbf{e}_u
=E_{1}\textbf{e}_u$, and hence, $H\textbf{e}_u=\sum_{j}\lambda_j E_{j}\textbf{e}_u=\lambda_1 E_{1}\textbf{e}_u=\lambda_1\textbf{e}_u$. Equivalently, $(H)_{u,j}=0$ for $j\neq u$, which contradicts the irreducibility of $H$. Hence, $|\sigma_u(H)|\geq 2$. Next, since $X$ is connected, $M$ is an irreducible Hermitian matrix, and so $|\sigma_u(M)|\geq 2$. Applying the Perron Frobenius Theorem yields \ref{posneg}. Let us prove 2. If $X$ is weighted $k$-regular, then $\alpha+(\beta+\gamma) k$ and $\alpha+\gamma$ are simple eigenvalues of $\textbf{A}$ and $\mathcal{A}$, resp., both with eigenvector $\textbf{1}$. On the other hand, if $\beta=-\gamma$ (resp., $\alpha=-\gamma$), then the connectedness of $X$ then implies that $\alpha$ (resp., $0$) is a simple eigenvalue of $\textbf{A}$ (resp., $\mathcal{A}$) with eigenvector $\textbf{1}$ (resp., $D^{\frac{1}{2}}\textbf{1}$).
\end{proof}

We now look at the implications of having twins in $X$ to the eigenvalues of $M$.

\begin{lemma}
\label{eu-ev}
The vector $\textbf{e}_u-\textbf{e}_v$ is an eigenvector of $M$ associated to $\theta$ if and only if
\begin{enumerate}
\item \label{eu-ev1} $\theta=\alpha+\beta\operatorname{deg}(u)+\gamma[(A)_{u,u}-(A)_{u,v}]$, $(2\beta+\gamma)[(A)_{u,u}-(A)_{v,v}]=0$ and $\left(A\right)_{j,u}=\left(A\right)_{j,v}$ for each $j\neq u,v$, whenever $M(X)=\textbf{A}(X)$; and
\item \label{eu-ev2} $\theta=\alpha+\gamma\left(\frac{(A)_{u,u}}{\operatorname{deg}(u)}-\frac{(A)_{u,v}}{\sqrt{\operatorname{deg}(u)\operatorname{deg}(v)}}\right)$, $\frac{(A)_{u,u}}{\operatorname{deg}(u)}=\frac{(A)_{v,v}}{\operatorname{deg}(v)}$ and $\frac{(A)_{j,u}}{\sqrt{\operatorname{deg}(u)}}=\frac{(A)_{j,v}}{\sqrt{\operatorname{deg}(v)}}$ for each $j\neq u,v$, whenever $M=\mathcal{A}$.
\end{enumerate}
In particular, if $T(\omega,\eta)$ is a set of twins in $X$ and $u,v\in T(\omega,\eta)$, then $\textbf{e}_u-\textbf{e}_v$ is an eigenvector for $M$ associated to $\theta$, where
\begin{equation}
\label{eval}
\theta=
\begin{cases}
 \alpha+\beta\operatorname{deg}(u)+\gamma(\omega-\eta), &\text{if $M=\textbf{A}$}\\
 \alpha+\frac{\gamma(\omega-\eta)}{\operatorname{deg}(u)}, &\text{if $M=\mathcal{A}$}.
\end{cases}
\end{equation}
\end{lemma}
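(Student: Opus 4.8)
The plan is to prove the general biconditional by directly computing $M(\textbf{e}_u-\textbf{e}_v)$ entry by entry and matching it against $\theta(\textbf{e}_u-\textbf{e}_v)$, handling the two cases $M=\textbf{A}$ and $M=\mathcal{A}$ separately. For each case, the eigenvector equation $M(\textbf{e}_u-\textbf{e}_v)=\theta(\textbf{e}_u-\textbf{e}_v)$ is equivalent to a system of scalar equations indexed by the vertices: one equation for the entry at $u$, one for the entry at $v$, and one for each entry at $j\neq u,v$. Reading off these three families of equations and simplifying should yield exactly the three stated conditions.

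First I would treat $M=\textbf{A}=\alpha I+\beta D+\gamma A$. For a vertex $j\neq u,v$, the $j$-th entry of $M(\textbf{e}_u-\textbf{e}_v)$ is $\gamma[(A)_{j,u}-(A)_{j,v}]$ (the $\alpha I$ and $\beta D$ terms contribute nothing off the diagonal at $j$), while the right-hand side contributes $0$ since the eigenvector is supported on $\{u,v\}$; this forces $(A)_{j,u}=(A)_{j,v}$ for all $j\neq u,v$. Next, the $u$-th entry gives $\alpha+\beta\operatorname{deg}(u)+\gamma(A)_{u,u}-\gamma(A)_{u,v}=\theta$, which is precisely the stated formula for $\theta$. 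The $v$-th entry gives $-\big(\alpha+\beta\operatorname{deg}(v)+\gamma(A)_{v,v}\big)+\gamma(A)_{v,u}=-\theta$. Adding the $u$-equation to the $v$-equation (using $(A)_{u,v}=(A)_{v,u}$, valid since $A$ is symmetric) collapses to $\beta[\operatorname{deg}(u)-\operatorname{deg}(v)]+\gamma[(A)_{u,u}-(A)_{v,v}]=0$. Writing $\operatorname{deg}(u)-\operatorname{deg}(v)$ in terms of the diagonal and using that $(A)_{j,u}=(A)_{j,v}$ already holds for $j\neq u,v$, the degree difference reduces to a multiple of the loop-weight difference $(A)_{u,u}-(A)_{v,v}$, and the condition rearranges to $(2\beta+\gamma)[(A)_{u,u}-(A)_{v,v}]=0$. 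This last algebraic reduction is the part that needs care, since $\operatorname{deg}(u)$ counts the loop with weight $2$ in the convention $\operatorname{deg}(u)=2\omega_{u,u}+\sum_{j\neq u}\omega_{u,j}$.

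For $M=\mathcal{A}=\alpha I+\gamma D^{-1/2}AD^{-1/2}$, I would repeat the same three-family analysis, now noting that $(\mathcal{A})_{j,\ell}=\gamma(A)_{j,\ell}/\sqrt{\operatorname{deg}(j)\operatorname{deg}(\ell)}$ off the diagonal and $(\mathcal{A})_{j,j}=\alpha+\gamma(A)_{j,j}/\operatorname{deg}(j)$ on the diagonal. The $j$-th equation for $j\neq u,v$ becomes $\gamma\big[(A)_{j,u}/\sqrt{\operatorname{deg}(j)\operatorname{deg}(u)}-(A)_{j,v}/\sqrt{\operatorname{deg}(j)\operatorname{deg}(v)}\big]=0$; cancelling $\sqrt{\operatorname{deg}(j)}$ gives $(A)_{j,u}/\sqrt{\operatorname{deg}(u)}=(A)_{j,v}/\sqrt{\operatorname{deg}(v)}$. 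The $u$-equation yields the stated $\theta$, and combining the $u$- and $v$-equations yields $(A)_{u,u}/\operatorname{deg}(u)=(A)_{v,v}/\operatorname{deg}(v)$.

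Finally, for the ``in particular'' statement I would specialize to $u,v\in T(\omega,\eta)$. By the definition of a set of twins, $u$ and $v$ have equal loop weight $\omega$, so $(A)_{u,u}=(A)_{v,v}=\omega$, they satisfy $(A)_{u,v}=\eta$, and $(A)_{j,u}=(A)_{j,v}$ for every $j\neq u,v$ by the matching-neighbourhood condition; twins also satisfy $\operatorname{deg}(u)=\operatorname{deg}(v)$. One then simply checks that all three conditions in part \ref{eu-ev1} (resp.\ \ref{eu-ev2}) hold automatically, and substitutes $(A)_{u,u}=\omega$, $(A)_{u,v}=\eta$ into the $\theta$-formula to obtain $\theta=\alpha+\beta\operatorname{deg}(u)+\gamma(\omega-\eta)$ in the $\textbf{A}$ case and $\theta=\alpha+\gamma(\omega-\eta)/\operatorname{deg}(u)$ in the $\mathcal{A}$ case. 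I expect the main obstacle to be the bookkeeping in the $\textbf{A}$ case reduction to $(2\beta+\gamma)[(A)_{u,u}-(A)_{v,v}]=0$, where the factor of $2$ from the loop contribution to the degree must be tracked correctly; everything else is routine substitution.
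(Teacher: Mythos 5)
Your proposal is correct and follows essentially the same route as the paper: the paper also just writes down $M(\textbf{e}_u-\textbf{e}_v)=\theta(\textbf{e}_u-\textbf{e}_v)$ and compares entries at $u$, $v$, and $j\neq u,v$, then specializes to twins using $(A)_{u,u}=(A)_{v,v}=\omega$, $(A)_{u,v}=\eta$ and $\operatorname{deg}(u)=\operatorname{deg}(v)$. Your extra care with the reduction $\operatorname{deg}(u)-\operatorname{deg}(v)=2[(A)_{u,u}-(A)_{v,v}]$ (using the already-derived off-support equalities and the loop convention) correctly recovers the factor $(2\beta+\gamma)$, which the paper leaves implicit.
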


\begin{proof}
Let $\textbf{e}_u-\textbf{e}_v$ be an eigenvector for $M$ associated to an eigenvalue $\theta$. If $M=\textbf{A}$, then we obtain
\begin{equation}
\label{adjtwin}
\textbf{A}(\textbf{e}_u-\textbf{e}_v)=\theta(\textbf{e}_u-\textbf{e}_v).
\end{equation}
Comparing $j$th entries of (\ref{adjtwin}) yields the conclusion for statement \ref{eu-ev1}. Next, consider the case $M=\mathcal{A}$. Then
\begin{equation}
\label{adjtwin1}
D^{-\frac{1}{2}}AD^{-\frac{1}{2}}(\textbf{e}_u-\textbf{e}_v)=\left(\frac{\theta-\beta}{\gamma}\right)(\textbf{e}_u-\textbf{e}_v),
\end{equation}
and again comparing $j$th entries of (\ref{adjtwin1}) proves statement \ref{eu-ev2}. In particular, if $T=T(\omega,\eta)$ is a set of twins in $X$, then $\eta=(A)_{u,v}$, $\omega=(A)_{u,u}=(A)_{v,v}$, and $\operatorname{deg}(u)=\operatorname{deg}(v)$ which yields (\ref{eval}).
\end{proof}

If Lemma \ref{eu-ev}(\ref{eu-ev1})holds, then $(A)_{u,u}=(A)_{v,v}$ is equivalent to $\operatorname{deg}(u)=\operatorname{deg}(v)$, while if Lemma \ref{eu-ev}(\ref{eu-ev2}) holds, then $(A)_{u,u}=(A)_{v,v}\neq 0$ is equivalent to $\operatorname{deg}(u)=\operatorname{deg}(v)$.  We also remark that by Lemma \ref{eu-ev}(\ref{eu-ev1}), $u$ and $v$ are twins in $X$ if and only if $\textbf{e}_u-\textbf{e}_v$ is an eigenvector for $M=A,\ L$ or $Q$. In fact, the converse of the last statement of Lemma \ref{eu-ev} is true for the case $M=\textbf{A}$ whenever $\beta\neq -\frac{\gamma}{2}$. However, it does not necessarily hold for $M=\textbf{A}$ whenever $\beta=-\frac{\gamma}{2}$, as well as for $M=\mathcal{A}$ whenever $\operatorname{deg}(u)\neq \operatorname{deg}(v)$. To illustrate this, let $X\cong C_3(1,0,1,1)$ in Figure \ref{cycle}, and consider $\textbf{A}=\alpha I-\frac{\gamma}{2}D+\gamma A$. Then $\alpha-\frac{\gamma}{2}$ is an eigenvalue of $\textbf{A}$ with associated eigenvector $\textbf{e}_u-\textbf{e}_v$, but $u$ and $v$ are not twins in $X$. Further, if $Y\cong C_3(0,-2,3,6)$, then $\alpha+\gamma$ is an eigenvalue of $\mathcal{A}$ with eigenvector $\textbf{e}_u-\textbf{e}_v$, but $u$ and $v$ are not twins in $Y$. In addition, if $X$ is simple, then Lemma \ref{eu-ev}(\ref{eu-ev1}) implies that $u$ and $v$ are twins in $X$ if and only if $\textbf{e}_u-\textbf{e}_v$ is an eigenvector for $M=\textbf{A}$. However, this does not necessarily hold for $M=\mathcal{A}$. We also observe that $u$ and $v$ are twins in $X$, then we can write $\theta$ in (\ref{eval}) as
\begin{equation*}
\theta=
\begin{cases}
 \omega-\eta, &\text{if $M=A$}\\
\operatorname{deg}(u)-\omega+\eta, &\text{if $M=L$}\\
\operatorname{deg}(u)+\omega-\eta, &\text{if $M=Q$}\\
 1-\frac{\omega-\eta}{\operatorname{deg}(u)}, &\text{if $M=\mathcal{L}$}.
\end{cases}
\end{equation*}

Now, suppose $u$ and $v$ are twins in $X$. Define the function $g$ on $V(X)$ given by $g(u)=v$, $g(v)=u$, and $g(a)=a$ for all $a\in V(X)\backslash \{u,v\}$. Then $g$ is an involution that switches $u$ and $v$ and fixes all other vertices. Conversely, suppose there exists an involution $g$ of $X$ that switches $u$ and $v$ and fixes all other vertices. Let $N(u)\backslash \{u,v\}=\{u_1,\ldots,u_j\}$. Since $f$ fixes all vertices other than $u$ and $v$, we get
\begin{equation*}
N(v)\backslash\{u,v\}=N(g(u))\backslash g(v)=\{g(u_1),\ldots,g(u_j)\}=\{u_1,\ldots,u_j\}=N(u)\backslash \{u,v\}.
\end{equation*}
As $g$ preserves the weights of adjacent vertices, $u$ and $v$ are twins. This yields the following facts.

\begin{lemma}
\label{autw}
Vertices $u$ and $v$ are twins in $X$ if and only if there exists an involution on $X$ that switches $u$ and $v$ and fixes all other vertices.
\end{lemma}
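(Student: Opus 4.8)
The plan is to prove the biconditional directly by exhibiting the obvious candidate involution and checking that it is a graph automorphism. Let $g$ be the transposition of $V(X)$ defined by $g(u)=v$, $g(v)=u$, and $g(a)=a$ for every $a\in V(X)\backslash\{u,v\}$. Since $g$ swaps $u$ and $v$ and fixes everything else, it is automatically a bijection on $V(X)$ with $g^2=\mathrm{id}$, so the only content of either direction is the equivalence between the twin conditions and the statement that $g$ respects adjacencies and edge weights, i.e., that $g$ is an automorphism of $X$.

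For the forward direction, I would assume $u$ and $v$ are twins and verify that $g$ preserves every edge and its weight. Edges $(a,b)$ with $a,b\notin\{u,v\}$ are fixed, so nothing needs to be checked there. For an edge of the form $(u,w)$ with $w\neq u,v$, condition 1 of the twin definition gives $w\in N_X(v)$, so $(g(u),g(w))=(v,w)$ is again an edge, and condition 2 guarantees it carries the same weight; the case $(v,w)$ is symmetric. If $u$ and $v$ are true twins, the edge $(u,v)$ maps to $(v,u)=(u,v)$ with its weight unchanged. Finally, if there are loops on $u$ and $v$, condition 3 ensures the loop on $u$ has the same weight as the loop on $v$, so $g$ carries one loop to the other weight-preservingly. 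Hence $g$ is a weight-preserving automorphism, and it is the desired involution.

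For the converse, I would assume such an involution $g$ exists and recover the three twin conditions from the fact that an automorphism preserves adjacencies and edge weights. Since $g$ fixes every $w\neq u,v$ and $g(u)=v$, the vertex $w$ lies in $N_X(u)\backslash\{u,v\}$ if and only if $w=g(w)$ lies in $N_X(g(u))\backslash\{u,v\}=N_X(v)\backslash\{u,v\}$, which is condition 1; the weight of $(u,w)$ equals the weight of its image $(v,w)$, which is condition 2; and if loops are present, $g$ sends the loop at $u$ to the loop at $v$ preserving weight, which is condition 3. This is essentially the computation already displayed in the paragraph preceding the statement.

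The argument is routine, so I do not expect a genuine obstacle; the only care required is bookkeeping across the case distinctions---true versus false twins, and the presence or absence of loops---so that the exceptional edge $(u,v)$ and the two loops are treated explicitly rather than swept into the generic case $w\neq u,v$. The one place where a careless reading could cause trouble is ensuring the neighbourhood identities are read with $\{u,v\}$ deleted, so that a possible edge between $u$ and $v$ does not spuriously break the equality of the (open) neighbourhoods.
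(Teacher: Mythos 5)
Your proposal is correct and follows essentially the same route as the paper: define the transposition $g$ swapping $u$ and $v$, check in the forward direction that the twin conditions make it a weight-preserving automorphism, and in the converse recover the twin conditions from the fact that $g$ fixes everything outside $\{u,v\}$ and preserves adjacency and weights. Your version is in fact slightly more careful than the paper's, which simply asserts the forward direction; your explicit treatment of the edge $(u,v)$ and the loops fills in that routine verification.
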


\begin{corollary}
\label{cosptw}
If $u$ and $v$ are twins in $X$, then $u$ and $v$ are cospectral with respect to $M$.
\end{corollary}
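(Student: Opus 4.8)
The plan is to derive the corollary immediately from the two earlier results, namely the involution produced by Lemma \ref{autw} and the automorphism criterion for cospectrality in Corollary \ref{cospaut}. First I would apply Lemma \ref{autw}: since $u$ and $v$ are twins, there is an involution $g$ of $X$ that interchanges $u$ and $v$ while fixing every other vertex. Let $P$ be the permutation matrix realizing $g$, so that $P$ is symmetric, $P^2=I$, and $P\textbf{e}_u=\textbf{e}_v$.

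Next I would verify the matrix identity needed by Corollary \ref{cospaut}, namely $M(X)=P^TM(X)P$ for both $M=\textbf{A}$ and $M=\mathcal{A}$. Because $g$ is a graph automorphism it preserves adjacencies and edge weights, so $P^TA(X)P=A(X)$; since it also preserves vertex degrees, it fixes the degree matrix, $P^TD(X)P=D(X)$, and hence $D(X)^{-\frac{1}{2}}$ as well. Substituting these into $\textbf{A}=\alpha I+\beta D+\gamma A$ and into $\mathcal{A}=\alpha I+\gamma D^{-\frac{1}{2}}AD^{-\frac{1}{2}}$ gives $P^T\textbf{A}P=\textbf{A}$ and $P^T\mathcal{A}P=\mathcal{A}$, which is precisely the remark made earlier that an isomorphism corresponds to a permutation matrix intertwining the associated matrices. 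With $M=P^TMP$ and $P\textbf{e}_u=\textbf{e}_v$ established, Corollary \ref{cospaut} (taking $H=M$) yields at once that $u$ and $v$ are cospectral with respect to $M$.

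The argument is short, and the only delicate point is confirming that the combinatorial involution translates into the single matrix identity $P^TMP=M$ uniformly for both choices of $M$; this rests on automorphisms preserving vertex degrees (so that $D$, and for $\mathcal{A}$ also $D^{-\frac{1}{2}}$, stay fixed), which holds since twins have equal degrees. I therefore expect no genuine obstacle here beyond keeping track of the two cases for $M$.
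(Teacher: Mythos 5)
Your proposal is correct and follows essentially the same route as the paper: the paper derives Corollary \ref{cosptw} directly from the involution guaranteed by Lemma \ref{autw} together with the automorphism criterion of Corollary \ref{cospaut}. Your explicit verification that $P^T\textbf{A}P=\textbf{A}$ and $P^T\mathcal{A}P=\mathcal{A}$ (via preservation of $A$ and $D$) just spells out the step the paper leaves implicit.
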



\section{Strong cospectrality}\label{strongcospectrality}

First, we provide a characterization of strong cospectrality between columns of Hermitian matrices. We follow the work of Godsil and Smith \cite{Godsil2017}.

\begin{theorem}
\label{sciffcp}
Let $H$ be an $m\times m$ Hermitian matrix. The following are equivalent.
\begin{enumerate}
\item\label{1} $u$ and $v$ are strongly cospectral with respect to $H$.
\item\label{2} $u$ and $v$ are cospectral and parallel with respect to $H$.
\item \label{4} $\phi_u(H,t)=\phi_v(H,t)$ and the poles of $\frac{\phi_S(H,t)}{\phi(H,t)}$ are simple, where $S=\{u,v\}$.
\end{enumerate}
\end{theorem}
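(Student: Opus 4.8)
The plan is to prove the cycle of equivalences by isolating the two extra ingredients of strong cospectrality over plain parallelism—namely the unit-modulus normalization—and recognizing that this normalization is precisely what cospectrality supplies. The engine of the whole argument is a single observation about the projections: since each $E_j$ is Hermitian and idempotent, $E_j=E_j^*E_j$, and therefore
\begin{equation*}
(E_j)_{u,u}=\textbf{e}_u^*E_j\textbf{e}_u=(E_j\textbf{e}_u)^*(E_j\textbf{e}_u)=\|E_j\textbf{e}_u\|^2,
\end{equation*}
and likewise $(E_j)_{v,v}=\|E_j\textbf{e}_v\|^2$. Hence $u$ and $v$ being cospectral with respect to $H$ is equivalent to $\|E_j\textbf{e}_u\|=\|E_j\textbf{e}_v\|$ for every $j$.

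First I would establish $(1)\Rightarrow(2)$. If $u$ and $v$ are strongly cospectral, then for each $j$ there is $c_j$ with $|c_j|=1$ and $E_j\textbf{e}_u=c_jE_j\textbf{e}_v$; this is already the parallelism condition. Taking norms gives $\|E_j\textbf{e}_u\|=|c_j|\,\|E_j\textbf{e}_v\|=\|E_j\textbf{e}_v\|$, so by the displayed identity $(E_j)_{u,u}=(E_j)_{v,v}$ for all $j$, i.e., $u$ and $v$ are cospectral.

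Next, for $(2)\Rightarrow(1)$, parallelism supplies constants $c_j$ with $E_j\textbf{e}_u=c_jE_j\textbf{e}_v$, and I must upgrade each to unit modulus. When $E_j\textbf{e}_v\neq\textbf{0}$, taking norms and invoking cospectrality gives $\|E_j\textbf{e}_v\|=|c_j|\,\|E_j\textbf{e}_v\|$, which forces $|c_j|=1$. The one point requiring care—and the only genuine subtlety of the argument—is the degenerate case $E_j\textbf{e}_v=\textbf{0}$: here cospectrality yields $\|E_j\textbf{e}_u\|=\|E_j\textbf{e}_v\|=0$, so $E_j\textbf{e}_u=\textbf{0}$ as well, and the relation $E_j\textbf{e}_u=c_jE_j\textbf{e}_v$ then holds for any constant, in particular for one of modulus one. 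Thus a unit-modulus $c_j$ can be chosen in every case, and $u$ and $v$ are strongly cospectral.

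Finally, $(2)\Leftrightarrow(3)$ is obtained by translating each half of (2) into spectral-polynomial language using results already in hand: cospectrality of $u$ and $v$ is equivalent to $\phi_u(H,t)=\phi_v(H,t)$ by Theorem \ref{cosp}, while parallelism of $u$ and $v$ is equivalent to the simplicity of the poles of $\phi_S(H,t)/\phi(H,t)$ with $S=\{u,v\}$ by Theorem \ref{parchar}(\ref{p1}). Combining these two equivalences gives exactly statement (3), closing the cycle and completing the proof.
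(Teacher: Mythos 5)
Your proof is correct. The paper itself only sketches this result: it asserts that the equivalence of statements (1) and (3) is ``an easy exercise'' and obtains (2) $\Leftrightarrow$ (3) by citing Theorem \ref{cosp} (cospectrality is equivalent to $\phi_u(H,t)=\phi_v(H,t)$) together with Theorem \ref{parchar}(\ref{p1}) (parallelism is equivalent to simplicity of the poles of $\phi_S(H,t)/\phi(H,t)$ for $S=\{u,v\}$). You close the cycle through a different edge: you prove (1) $\Leftrightarrow$ (2) directly, via the identity $(E_j)_{u,u}=\|E_j\textbf{e}_u\|^2$ coming from $E_j=E_j^*E_j$, so that cospectrality becomes exactly the statement $\|E_j\textbf{e}_u\|=\|E_j\textbf{e}_v\|$ for all $j$ --- precisely the normalization needed to upgrade a parallelism constant to unit modulus. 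This is the standard argument behind the ``well-known fact'' the paper alludes to in its introduction, and your treatment of the degenerate case $E_j\textbf{e}_v=\textbf{0}$ is the one point that genuinely requires care; you handle it correctly. Your (2) $\Leftrightarrow$ (3) step is identical to the paper's. The net difference is only which link is left implicit: the paper leaves the spectral content of statement (1) unexamined, while your version makes it explicit and is the more self-contained of the two.
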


The equivalence of \ref{1} and \ref{4} is an easy exercise, while \ref{2} and \ref{4} are equivalent by Theorems \ref{cosp} and \ref{parchar}. Next, we give another characterization of strong cospectrality, which generalizes \cite[Theorem 11.2]{Godsil2017}.

\begin{theorem}
\label{sciffcp1}
Let $H$ be an $m\times m$ Hermitian matrix. The following are equivalent.
\begin{enumerate}
\item\label{1} $u$ and $v$ are strongly cospectral with respect to $H$.
\item\label{3} There is a unitary matrix $R$ such that $R\textbf{e}_u=\textbf{e}_v$, $RH=HR$ and $R=f(H)$ for some function $f$ that is analytic on its domain $D$ and each eigenvalue of $H$ is contained in $D$.
\end{enumerate}
\end{theorem}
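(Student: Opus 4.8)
The plan is to work throughout with the spectral decomposition $H=\sum_j\lambda_jE_j$ and to exploit two facts recalled earlier: every $E_j$ is a polynomial in $H$, and any matrix of the form $R=f(H)=\sum_jf(\lambda_j)E_j$ automatically satisfies $RH=HR$. The construction of $R$ in the direction \ref{1}$\Rightarrow$\ref{3} will come from prescribing the values of $f$ at the eigenvalues of $H$ and realizing $f$ by an interpolating polynomial; the reverse direction will be read off by applying each spectral idempotent $E_k$ to the identity $R\textbf{e}_u=\textbf{e}_v$.

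For \ref{1}$\Rightarrow$\ref{3}, I would assume $u$ and $v$ are strongly cospectral, so for each $j$ there is $c_j\in\complex$ with $|c_j|=1$ and $E_j\textbf{e}_u=c_jE_j\textbf{e}_v$; when $E_j\textbf{e}_u=E_j\textbf{e}_v=0$, i.e. $\lambda_j\notin\sigma_u(H)=\sigma_v(H)$, I simply set $c_j=1$. Then I would define $f$ by requiring $f(\lambda_j)=\overline{c_j}$ at each eigenvalue, realized by a Lagrange interpolating polynomial, which is analytic on all of $\complex$ and hence on any domain containing $\sigma(H)$. Setting $R=f(H)$, the computation $R\textbf{e}_u=\sum_j\overline{c_j}E_j\textbf{e}_u=\sum_j\overline{c_j}c_jE_j\textbf{e}_v=\sum_j|c_j|^2E_j\textbf{e}_v=\sum_jE_j\textbf{e}_v=\textbf{e}_v$ gives the required mapping, where the last steps use $|c_j|=1$ for all $j$. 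Since $|f(\lambda_j)|=|c_j|=1$ for every $j$, we get $R^*R=\sum_j|f(\lambda_j)|^2E_j=\sum_jE_j=I$, so $R$ is unitary, while $RH=HR$ holds because $R$ is a polynomial in $H$.

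For \ref{3}$\Rightarrow$\ref{1}, I would write $R=f(H)=\sum_jf(\lambda_j)E_j$. Unitarity forces $I=R^*R=\sum_j|f(\lambda_j)|^2E_j$, and since the $E_j$ are nonzero, mutually orthogonal, and sum to $I$, this yields $|f(\lambda_j)|=1$ for every $j$. Applying $E_k$ to $R\textbf{e}_u=\textbf{e}_v$ and using $E_kE_j=\delta_{kj}E_k$ gives $f(\lambda_k)E_k\textbf{e}_u=E_k\textbf{e}_v$, hence $E_k\textbf{e}_u=\overline{f(\lambda_k)}\,E_k\textbf{e}_v$ with $|\overline{f(\lambda_k)}|=1$, which is exactly the definition of strong cospectrality.

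The argument is essentially bookkeeping with the spectral decomposition; the one point that needs care is that $f$ must be genuinely analytic rather than merely a function defined on the spectrum, which I handle by taking $f$ to be a polynomial interpolant at the finitely many eigenvalues. A related subtlety, which I expect to be the main thing to get right, is the assignment of unit-modulus values of $f$ at eigenvalues outside the common eigenvalue support, where the scalar $c_j$ is not pinned down by strong cospectrality; choosing these values on the unit circle is precisely what makes $R$ unitary while leaving $R\textbf{e}_u=\textbf{e}_v$ unaffected.
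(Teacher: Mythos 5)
Your proof is correct and takes essentially the same approach as the paper: construct $R=f(H)$ with $f$ an interpolating polynomial taking unit-modulus values $\overline{c_j}$ at the eigenvalues, and read off the converse by applying each $E_k$ to $R\textbf{e}_u=\textbf{e}_v$. You simply spell out the details the paper leaves implicit (the choice $c_j=1$ off the eigenvalue support, the unitarity computation $R^*R=\sum_j|f(\lambda_j)|^2E_j=I$, and the "straightforward" converse), all of which are handled correctly.
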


\begin{proof}
Let $u$ and $v$ be strongly cospectral columns of $H$, and $H=\sum_j\lambda_jE_j$ be a spectral decomposition of $H$. Let $f$ be a function that is analytic on its domain $D$ such that each $\lambda_j$ is contained in $D$ and $f(\lambda_j)=c_j$ for each $j$, where $c_j$ is a unit complex number such that $E_j\textbf{e}_v=c_jE_j\textbf{e}_u$. Then $R=f(H)$ is a matrix with the desired properties. The converse is straightforward.
\end{proof}

There are many analytic functions $f$ that satisfy Theorem \ref{sciffcp1}(\ref{3}). In fact, since each $E_j$ is a polynomial in $H$ and $R$ is a sum of $c_j E_j$'s, we may take $f\in\mathbb{C}[x]$. If $H$ is real symmetric, then we may take $f\in\mathbb{R}[x]$.

Using Theorem \ref{sciffcp}(\ref{4}) and following the proof of \cite[Lemmas 2.4 and 2.5]{CG}, we obtain the computational complexity of deciding whether two columns of a Hermitian matrix exhibit strong cospectrality.

\begin{theorem}
\label{polyt}
Let $H$ be an $m\times m$ Hermitian matrix with columns $u$ and $v$. The eigenvalues in $\sigma_u(H)$ can be calculated in polynomial time. Moreover, deciding whether $u$ and $v$ are cospectral, parallel, and strongly cospectral with respect to $H$ can be done in polynomial time.
\end{theorem}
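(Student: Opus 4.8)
The plan is to reduce every one of the required decisions to the rational-function criteria already established in Theorems~\ref{cosp}, \ref{parchar} and \ref{sciffcp}, and then to observe that each ingredient of those criteria is computable with only a polynomial number of arithmetic operations. The single computational primitive I would rely on is the polynomial-time construction of characteristic polynomials of $H$ and of its principal submatrices: for an $m\times m$ matrix, $\phi(H,t)$ can be produced as an explicit polynomial in $t$ using a polynomial number of field operations (for instance via the Faddeev--LeVerrier or Samuelson--Berkowitz algorithm), and the same bound applies to $\phi_u(H,t)=\det((tI-H)[u])$ and to $\phi_S(H,t)=\det((tI-H)[S])$ with $S=\{u,v\}$, since these are the characteristic polynomials of the $(m-1)\times(m-1)$ and $(m-2)\times(m-2)$ principal submatrices. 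Because $H$ is Hermitian, all three are polynomials with real coefficients whose roots are real.

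First I would compute $\sigma_u(H)$. By Theorem~\ref{cosp}, its elements are the poles of $\phi_u(H,t)/\phi(H,t)$. I would compute $d=\gcd(\phi(H,t),\phi_u(H,t))$ by the (subresultant) Euclidean algorithm and set $q=\phi(H,t)/d$; the poles are then exactly the roots of $q$. From the spectral decomposition, $\phi_u(H,t)/\phi(H,t)=\sum_j (E_j)_{u,u}/(t-\lambda_j)$ has only simple poles, located precisely at the eigenvalues $\lambda_j$ with $(E_j)_{u,u}\neq 0$, so $q$ is squarefree and its roots are exactly the distinct elements of $\sigma_u(H)$. These real roots can then be isolated to arbitrary precision in polynomial time by standard real-root isolation (e.g.\ Sturm sequences), which is what I mean by ``calculated in polynomial time.''

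For the three equivalence relations I would invoke the determinantal characterizations directly. Cospectrality is decided by testing the polynomial identity $\phi_u(H,t)=\phi_v(H,t)$ coefficientwise (Theorem~\ref{cosp}), a comparison of $O(m)$ coefficients. Parallelism is decided, via Theorem~\ref{parchar}(\ref{p1}), by checking that the poles of $\phi_S(H,t)/\phi(H,t)$ are simple: reducing to lowest terms as $r=\phi(H,t)/\gcd(\phi(H,t),\phi_S(H,t))$, the poles are simple exactly when $r$ is squarefree, i.e.\ when $\gcd(r,r')$ is a nonzero constant, which is again a polynomial-time test. Finally, strong cospectrality is, by Theorem~\ref{sciffcp}(\ref{4}), the conjunction of these two conditions, so it is decided by running both checks. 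Each of the three decisions therefore uses only a polynomial number of arithmetic operations.

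The point I expect to require the most care is the computational model and exact arithmetic: the eigenvalues of $H$ are in general irrational, so I would avoid computing them numerically and instead keep everything at the level of the integer or (Gaussian) rational coefficients of the characteristic polynomials, where gcd computation, squarefree detection, and coefficient comparison are all polynomial time with controlled coefficient growth. The Hermitian hypothesis is exactly what guarantees that $\phi_u/\phi$ has simple poles, so that the support is read off a squarefree polynomial, and that all relevant roots are real so that real-root isolation applies. This mirrors the argument of \cite[Lemmas 2.4 and 2.5]{CG} for the adjacency matrix, the only change being the replacement of $A(X)$ by an arbitrary Hermitian $H$ in the determinantal identity (\ref{yo}).
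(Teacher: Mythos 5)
Your proposal is correct and follows essentially the same route as the paper, which simply invokes Theorem~\ref{sciffcp}(\ref{4}) together with the argument of \cite[Lemmas 2.4 and 2.5]{CG}; you have filled in exactly the details that adaptation requires (characteristic polynomials of principal submatrices, gcd reduction, squarefree tests). Your remarks on exact arithmetic and the simplicity of the poles of $\phi_u(H,t)/\phi(H,t)$ are the right points to be careful about and are handled correctly.
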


Next, we give a lower bound on the sizes of eigenvalue supports of strongly cospectral vertices.

\begin{theorem}
\label{esupp+}
Let $m\geq 3$, $H$ be an $m\times m$ irreducible Hermitian matrix. If $u$ and $v$ are strongly cospectral with respect to $H$, then $|\sigma_u(H)|\geq 3$. If $H$ is real symmetric, then the following also hold.
\begin{enumerate}
\item  \label{yeh} $|\sigma_{uv}^+(H)|=1$ if and only if $\textbf{e}_u+\textbf{e}_v$ is an eigenvector for $H$. In particular, if $X$ is a connected weighted graph with possible loops, then $|\sigma_{uv}^+(M)|=1$ if and only if $\textbf{e}_u+\textbf{e}_v$ is an eigenvector for $M$ associated to $\lambda$ defined in (\ref{eval2}). Moreover, if $|\sigma_{uv}^+(M)|=1$ then $X$ has positive and negative edge weights, and $u$ and $v$ are not twins.
\item \label{ye} $|\sigma_{uv}^-(H)|=1$ if and only if $\textbf{e}_u-\textbf{e}_v$ is an eigenvector for $H$. In particular, if $X$ is a connected weighted graph with possible loops, then $|\sigma_{uv}^-(M)|=1$ if and only if $\textbf{e}_u-\textbf{e}_v$ is an eigenvector for $H$ associated to $\theta$ defined in (\ref{eval}). Moreover, if $|\sigma_{uv}^-(\textbf{A})|=1$, then $u$ and $v$ are twins if and only if either the loops on $u$ and $v$ have equal weights or $\beta\neq-\frac{\gamma}{2}$, while if $|\sigma_{uv}^-(\mathcal{A})|=1$, then $u$ and $v$ are twins if and only if the loops on $u$ and $v$ have equal nonzero weights.
\end{enumerate} 
\end{theorem}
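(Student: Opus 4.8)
The plan is to prove the central inequality via the walk module and then read parts \ref{yeh} and \ref{ye} off the spectral decompositions of $\textbf{e}_u\pm\textbf{e}_v$. For the inequality, Proposition \ref{esupp} already gives $|\sigma_u(H)|\geq 2$, so it suffices to exclude $|\sigma_u(H)|=2$. Suppose $\sigma_u(H)=\sigma_v(H)=\{\lambda_1,\lambda_2\}$, the equality of supports being forced by the cospectrality that strong cospectrality entails. Since the $E_j$ sum to $I$ and annihilate $\textbf{e}_u$ off its support, $\textbf{e}_u=E_1\textbf{e}_u+E_2\textbf{e}_u$, and strong cospectrality gives $E_j\textbf{e}_v=c_jE_j\textbf{e}_u$ with $|c_j|=1$, whence $\textbf{e}_v=c_1E_1\textbf{e}_u+c_2E_2\textbf{e}_u$. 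First I would check $c_1\neq c_2$: otherwise $\textbf{e}_v=c_1\textbf{e}_u$, which is impossible for two distinct standard basis vectors. Then $\{E_1\textbf{e}_u,E_2\textbf{e}_u\}$ — the orthogonal basis of the $H$-module generated by $\textbf{e}_u$ exhibited in the proof of Theorem \ref{parchar} — spans a two-dimensional $H$-invariant subspace containing both $\textbf{e}_u$ and $\textbf{e}_v$, hence equal to $\operatorname{span}\{\textbf{e}_u,\textbf{e}_v\}$. Invariance forces $(H)_{j,u}=(H)_{j,v}=0$ for every $j\neq u,v$, so $H$ decouples $\{u,v\}$ from the remaining $m-2\geq 1$ coordinates, contradicting irreducibility. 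This passage from $|\sigma_u(H)|=2$ to a disconnected block is the crux, but it is short; the only care needed is the observation $c_1\neq c_2$.

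For parts \ref{yeh} and \ref{ye}, assume $H$ real symmetric, so $\sigma_u(H)=\sigma_{uv}^+(H)\cup\sigma_{uv}^-(H)$ and $E_j\textbf{e}_v=\pm E_j\textbf{e}_u$. I would expand $\textbf{e}_u-\textbf{e}_v=\sum_jE_j(\textbf{e}_u-\textbf{e}_v)$: each $\lambda_j\in\sigma_{uv}^+$ and each $\lambda_j\notin\sigma_u(H)$ contributes $0$, while each $\lambda_j\in\sigma_{uv}^-$ contributes $2E_j\textbf{e}_u\neq 0$. Since the surviving terms are mutually orthogonal and nonzero, $\textbf{e}_u-\textbf{e}_v$ lies in a single eigenspace precisely when exactly one term survives, i.e.\ when $|\sigma_{uv}^-(H)|=1$; this proves \ref{ye}, and the identical computation with $\textbf{e}_u+\textbf{e}_v$ proves \ref{yeh}. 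The eigenvalue in each case is then the one supplied by Lemma \ref{eu-ev} (eigenvalue $\theta$ of (\ref{eval}) for $\textbf{e}_u-\textbf{e}_v$) and by its $\textbf{e}_u+\textbf{e}_v$ analogue (eigenvalue $\lambda$ of (\ref{eval2})).

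It remains to verify the ``moreover'' clauses, where the real work lies. For \ref{yeh}, suppose $|\sigma_{uv}^+(M)|=1$, so $\textbf{e}_u+\textbf{e}_v$ is an eigenvector of $M$; it is nonnegative and, since $m\geq 3$, has a zero entry. If $X$ were positively or negatively weighted, then after a scalar shift one of $\pm M+cI$ is an irreducible entrywise-nonnegative matrix, and the Perron--Frobenius theorem forbids a nonnegative eigenvector with a zero entry, a contradiction; hence $X$ carries both positive and negative edge weights. For the non-twin assertion I would argue by contradiction: if $u,v$ were twins, then Lemma \ref{eu-ev} makes $\textbf{e}_u-\textbf{e}_v$ an eigenvector of $M$ as well, and having both $\textbf{e}_u-\textbf{e}_v$ and $\textbf{e}_u+\textbf{e}_v$ as eigenvectors makes $\textbf{e}_u$ itself an eigenvector, forcing $(M)_{j,u}=0$ for all $j\neq u$ and again contradicting irreducibility.

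Finally, for the ``moreover'' part of \ref{ye} suppose $|\sigma_{uv}^-(M)|=1$, so by Lemma \ref{eu-ev} the conditions in \ref{eu-ev1} (for $\textbf{A}$) or \ref{eu-ev2} (for $\mathcal{A}$) hold, and being twins adds exactly the requirement that the loop weights $(A)_{u,u}$ and $(A)_{v,v}$ agree. For $\textbf{A}$ the eigenvector condition reads $(2\beta+\gamma)[(A)_{u,u}-(A)_{v,v}]=0$ together with matching off-diagonal entries, so when $\beta\neq-\tfrac{\gamma}{2}$ the loop weights are forced equal (hence $u,v$ are twins), whereas when $\beta=-\tfrac{\gamma}{2}$ they are twins precisely when the loop weights coincide; combining the two cases yields the stated ``equal loop weights or $\beta\neq-\tfrac{\gamma}{2}$.'' For $\mathcal{A}$ I would invoke the equivalence recorded after Lemma \ref{eu-ev} — that under \ref{eu-ev2}, $(A)_{u,u}=(A)_{v,v}\neq 0$ is equivalent to $\operatorname{deg}(u)=\operatorname{deg}(v)$ — to convert the twin condition (which entails $\operatorname{deg}(u)=\operatorname{deg}(v)$ and matching off-diagonal entries, the latter following from \ref{eu-ev2} once the degrees agree) into the statement that the loops have equal nonzero weights. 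The hard part throughout this last paragraph is not any single idea but keeping the twin/eigenvector case distinctions straight and correctly handling the degenerate $\beta=-\tfrac{\gamma}{2}$ and degree-coincidence cases; that bookkeeping is the main obstacle.
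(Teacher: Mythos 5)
Your proposal tracks the paper's proof closely in overall architecture --- rule out $|\sigma_u(H)|=2$ by showing $\{u,v\}$ would decouple from the remaining coordinates, then read the two ``iff eigenvector'' claims off the expansions of $\textbf{e}_u\pm\textbf{e}_v$ --- but two sub-arguments are genuinely different. For the bound $|\sigma_u(H)|\geq 3$, the paper writes $E_1$ and $E_2$ entrywise in block form and grinds through the sum-to-identity relations; your observation that $\operatorname{span}\{E_1\textbf{e}_u,E_2\textbf{e}_u\}=\operatorname{span}\{\textbf{e}_u,\textbf{e}_v\}$ is an $H$-invariant coordinate subspace (once $c_1\neq c_2$, which you correctly isolate as the one point needing care) reaches the same block-diagonality contradiction more cleanly. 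For the sign claim in part 1, the paper compares entries of $H(\textbf{e}_u+\textbf{e}_v)=\lambda(\textbf{e}_u+\textbf{e}_v)$ to get $(M)_{j,u}=-(M)_{j,v}$ for $j\neq u,v$, which directly exhibits a pair of edges of opposite sign (connectedness and $m\geq 3$ supply a common neighbour $j$ with $(M)_{j,u}\neq 0$); your Perron--Frobenius argument --- an irreducible sign-definite matrix, suitably shifted, cannot have a nonnegative eigenvector with a zero entry --- is a valid alternative and somewhat more conceptual. Everything else (the $\textbf{w}^{\pm}$ expansion, the identification of $\theta$ and $\lambda$ via Lemma \ref{eu-ev} and (\ref{eval2}), the case split on $\beta=-\tfrac{\gamma}{2}$ and on loop weights) is essentially the paper's argument.

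One step is wrong as written: in the non-twin clause of part 1 you assert that if both $\textbf{e}_u+\textbf{e}_v$ and $\textbf{e}_u-\textbf{e}_v$ are eigenvectors then ``$\textbf{e}_u$ itself is an eigenvector, forcing $(M)_{j,u}=0$ for all $j\neq u$.'' That holds only if the two eigenvalues coincide; generically $\lambda\neq\theta$ and then $M\textbf{e}_u=\tfrac{\lambda+\theta}{2}\textbf{e}_u+\tfrac{\lambda-\theta}{2}\textbf{e}_v$ is not a multiple of $\textbf{e}_u$ (and indeed $(M)_{v,u}\neq 0$ whenever $u,v$ are true twins). The repair is immediate and is what the paper does at the analogous point of its own proof: adding and subtracting the two eigen-equations gives $(M)_{j,u}=(M)_{j,v}=0$ for all $j\neq u,v$, so $\{u,v\}$ still decouples and irreducibility with $m\geq 3$ is still violated. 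Alternatively, the paper's shorter route works here too: twins give $(M)_{j,u}=(M)_{j,v}$, which against $(M)_{j,u}=-(M)_{j,v}$ forces both to vanish. With that one sentence corrected, the proposal is sound.
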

\begin{proof}
Let $H$ be an $m\times m$ Hermitian matrix, and $u$ and $v$ be strongly cospectral columns of $H$. Then $E_1\textbf{e}_u=c_1E_1\textbf{e}_v$ and $E_2\textbf{e}_u=c_2E_2\textbf{e}_v$ for some unit complex numbers $c_1$ and $c_2$. By way of contradiction, assume $|\sigma_u(H)|=2$ with $\lambda_1,\lambda_2\in \sigma_u(H)$. Without loss of generality, suppose $u$ and $v$ are the first and second columns of $H$, respectively. Since $E_1$ and $E_2$ are Hermitian, we may write 
\begin{equation*}
E_1=\left[\begin{array}{c|c}
\ x_1\ \ \ \ c_1x_1 & \ \overline{x_3}\ \ \cdots\ \ \overline{x_m} \\
\overline{c_1x_1}\ \ |c_1|^2\overline{x_1} & \ \overline{c_1x_3}\ \cdots\ \overline{c_1x_m}\\
\hline
\ x_3\ \ \ \ c_1x_3 &\\
\vdots\ \ \ \ \ \ \ \vdots &\ *\\
\ \ x_m\ \ \ c_1x_m &\\
\end{array}
\right]
\quad \text{and}\quad E_2=\left[\begin{array}{c|c}
\ y_1\ \ \ \ c_2y_1 & \ \overline{y_3}\ \ \cdots\ \ \overline{y_m} \\
\overline{c_2y_1}\ \ |c_2|^2\overline{y_1} & \ \overline{c_2y_3}\ \cdots\ \overline{c_2y_m}\\
\hline
\ y_3\ \ \ \ c_2y_3 &\\
\vdots\ \ \ \ \ \ \ \vdots &\ *\\
\ \ y_m\ \ \ c_2y_m &\\
\end{array}
\right].
\end{equation*}
As $E_j\textbf{e}_u=0$ for all $j\geq 3$, we have $E_j=\left[\begin{array}{c|c} 0 & 0\\ \hline 0 &*\\ \end{array} \right]$. Since the $E_j$'s sum to identity, we get $x_1+y_1=1$, $c_1x_k+c_2y_k=0$ for all $k\neq 2$, and $x_k+y_k=0$ for all $k\geq 3$. The latter two equations imply that $(c_1-c_2)x_k=0$ for all $k\geq 3$. If $c_1=c_2$, then $x_1+y_1=0$, a contradiction. Thus, $c_1\neq c_2$, and so $x_k=y_k=0$ for all $k\geq 3$. Hence, each $E_j$ is block diagonal, and so $H$ is also block diagonal, a contradiction to the irreducibility of $H$. Thus, $|\sigma_u(H)|\geq 3$. Now, suppose further that $H$ is real symmetric. Define
\begin{equation}
\label{sctw2}
\textbf{w}^+:=\displaystyle\sum_{\lambda_k\in\sigma_{uv}^+(M)}E_{k}\textbf{e}_u\quad \text{and}\quad \textbf{w}^-:=\displaystyle\sum_{\lambda_{\ell}\in\sigma_{uv}^-(M)}E_{\ell}\textbf{e}_u.
\end{equation}
Using (\ref{sctw2}), and the fact that the $E_k$'s and $E_{\ell}$'s sum to identity, we get $\textbf{e}_u=\textbf{w}^++\textbf{w}^-$ and $\textbf{e}_v=\textbf{w}^+-\textbf{w}^-$, and so $\textbf{w}^+=\frac{1}{2}(\textbf{e}_u+\textbf{e}_v)\quad \text{and}\quad \textbf{w}^-=\frac{1}{2}(\textbf{e}_u-\textbf{e}_v)$. Thus, both $\sigma_{uv}^+(H)$ and $\sigma_{uv}^-(H)$ have at least one element.

Assume $|\sigma_{uv}^+(H)|=1$. From (\ref{sctw2}), $\textbf{w}^+=E_{k}\textbf{e}_u$ for some $k$, and thus,
\begin{equation}
\label{yup}
H\textbf{w}^+=\sum_{j}\lambda_j E_{j}(E_{k}\textbf{e}_u)=\lambda_k E_{k}^2\textbf{e}_u=\lambda_k E_{k}\textbf{e}_u=\lambda_k \textbf{w}^+.
\end{equation}
Using (\ref{yup}), we have
\begin{equation}
\label{m}
H(\textbf{e}_u+\textbf{e}_v)=\lambda_k(\textbf{e}_u+\textbf{e}_v).
\end{equation}
Equivalently, $\lambda:=\lambda_k$ is an eigenvalue for $H$ with eigenvector $\textbf{e}_u+\textbf{e}_v$. On the other hand, if $|\sigma_{uv}^-(H)|=1$, then an argument similar to (\ref{yup}) yields
\begin{equation}
\label{boink}
H(\textbf{e}_u-\textbf{e}_v)=\lambda_{\ell}(\textbf{e}_u-\textbf{e}_v).
\end{equation}
Equivalently $\textbf{w}^-$ is an eigenvector for $H$ associated to $\theta:=\lambda_{\ell}$. Now, if $|\sigma_{uv}^+(H)|=|\sigma_{uv}^-(H)|=1$, then (\ref{m}) and (\ref{boink}) implies that $(H)_{j,u}=0$ for all $j\neq u,v$, which contradicts the irreducibility of $H$. Thus, $|\sigma_{uv}^+(H)|$ and $|\sigma_{uv}^-(H)|$ cannot be both one, and so $|\sigma_u(H)|=|\sigma_{uv}^+(H)|+|\sigma_{uv}^-(H)|\geq 3$.

Now, $|\sigma_{uv}^+(M)|=1$ if and only if $\textbf{e}_u+\textbf{e}_v$ is an eigenvector for $M$ associated to $\lambda$, and $|\sigma_{uv}^-(M)|=1$ if and only if $\textbf{e}_u-\textbf{e}_v$ is an eigenvector for $M$ associated to $\theta$. If $H=M$, comparing entries in (\ref{m}) yields
\begin{equation}
\label{eval2}
\lambda=
\begin{cases}
 \alpha+\beta\operatorname{deg}(u)+\gamma((A)_{u,u}+(A)_{u,v}), &\text{if $M=\textbf{A}$}\\
 \beta+\gamma\left(\frac{(A)_{u,u}}{\operatorname{deg}(u)}+\frac{(A)_{u,v}}{\sqrt{\operatorname{deg}(u)\operatorname{deg}(v)}}\right), &\text{if $M=\mathcal{A}$},
\end{cases}
\end{equation}
$(M)_{j,u}=-(M)_{j,v}$ for $j\neq u,v$, and $(M)_{u,u}=(M)_{v,v}$, which proves (\ref{yeh}). Moreover, since (\ref{boink}) holds with $H=M$, an application of Lemma \ref{eu-ev} yields (\ref{ye}).
\end{proof}

By Theorem \ref{esupp+}, if $|\sigma_u(H)|=2$, then $u$ cannot be strongly cospectral with any column in $H$. We also remark that some results in Theorem \ref{esupp+} relating to $M=L$ in the context of simple unweighted graphs were observed by Coutinho et al. \cite[Lemma 3.1]{Coutinho2014a} in 2014, and then by Chan et al. \cite[Corollary 6.3]{Chan2020} in 2020.

To illustrate Theorem \ref{esupp+}, we give the following example.

\begin{example}
\label{negex}
Consider the graph $Y\cong Y(1,-1)$ in Figure \ref{plantsie} where the vertices marked blue are labelled $u$ and $v$, while the other two are labelled $a$ and $b$. If we index the first two columns of $A$ by $u$ and $v$, then the eigenvalues of $A$ are $1\pm\sqrt{5}$ and $0$ (multiplicity two), with eigenvectors $\left(\frac{1}{2}\left(1\pm\sqrt{5}\right),\frac{1}{2}\left(-1\mp\sqrt{5}\right),1,1\right)^T$ and $\textbf{e}_u+\textbf{e}_v$, $\textbf{e}_a-\textbf{e}_b$. Thus, $E_{0}\textbf{e}_u=E_{0}\textbf{e}_v$ and $E_{1\pm\sqrt{5}}\textbf{e}_u=-E_{{1\pm\sqrt{5}}}\textbf{e}_v$
while $E_{0}\textbf{e}_a=-E_{0}\textbf{e}_b$ and $E_{1\pm\sqrt{5}}\textbf{e}_u=E_{{1\pm\sqrt{5}}}\textbf{e}_v$. Hence, $u$ and $v$ are adjacency strongly cospectral, as are $a$ and $b$. Further, $|\sigma_{uv}^+(A)|=|\sigma_a^-(A)|=1$, $|\sigma_{uv}^-(A)|=|\sigma_a^+(A)|=2$ so that $|\sigma_u(A)|=|\sigma_a(A)|=3$. By Theorem \ref{esupp+}, $X$ has positive and negative edge weights, $\textbf{e}_u+\textbf{e}_v$ is an eigenvector for $A$, and $u$ and $v$ are not twins, while $a$ and $b$ are.
\end{example}

The following theorem states that cospectrality, parallelism, and strong cospectrality in simple unweighted graphs are closed under complementation under some conditions.

\begin{theorem}
\label{comp}
Let $X$ be a simple connected unweighted graph. The following statements hold.
\begin{enumerate}
\item \label{compA} If $X$ is regular, then two vertices in $X$ are strongly cospectral with respect to $M(X)$ if and only if they are strongly cospectral with respect to $M(\overline{X})$.
\item If $\beta=-\gamma$, then two vertices in $X$ are strongly cospectral with respect to $\textbf{A}(X)$ if and only if they are strongly cospectral with respect to $\textbf{A}(\overline{X})$.
\end{enumerate}
\end{theorem}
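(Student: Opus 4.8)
The plan is to reduce both parts to a statement about the ordinary adjacency matrix (Part 1) or the Laplacian (Part 2), and then to exploit the classical complement identities together with the fact that $\textbf{1}$ is a common eigenvector. In Part 1 regularity gives $D(X)=kI$, so $\textbf{A}(X)=(\alpha+\beta k)I+\gamma A(X)$ and $\mathcal{A}(X)=\alpha I+\tfrac{\gamma}{k}A(X)$ are affine in $A(X)$; since $M$ and $aI+bM$ share the same orthogonal projections, $M(X)$ and $A(X)$ have identical spectral idempotents, and likewise for $\overline{X}$ (which is $(n-1-k)$-regular). Thus Part 1 reduces to proving, for regular $X$, that $u,v$ are strongly cospectral with respect to $A(X)$ if and only if with respect to $A(\overline{X})$, where I would use $A(\overline{X})=J-I-A(X)$. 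In Part 2 the choice $\beta=-\gamma$ gives $\textbf{A}(X)=\alpha I-\gamma L(X)$ with $L(X)=D(X)-A(X)$, so its idempotents are those of $L(X)$; here no regularity is needed and the relevant identity is $L(X)+L(\overline{X})=nI-J$, valid for every simple graph.

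In either reduction write $N$ for $A(X)$ or $L(X)$ and $N'$ for its complement counterpart ($J-I-N$ or $nI-J-N$). Since $A(X)\textbf{1}=k\textbf{1}$ and $L(X)\textbf{1}=\textbf{0}$, the matrix $N$ commutes with $J$, hence with $N'$, so they are simultaneously diagonalizable. Writing $N=\sum_j\theta_jE_j$, every eigenvector orthogonal to $\textbf{1}$ lies in $\ker J$ and stays an eigenvector of $N'$ with a relabelled eigenvalue ($-1-\theta_j$ for adjacency, $n-\theta_j$ for the Laplacian), while $\textbf{1}$ spans the idempotent $E_0=\tfrac1nJ$ and maps to the Perron (resp.\ zero) eigenspace of $N'$. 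Because distinct eigenvalues of $N$ give distinct relabelled eigenvalues, each idempotent of $N'$ is a single $E_j$, with the sole exception that $E_0$ may coalesce with one further idempotent $E_{j_0}$; this merge occurs exactly when $\overline{X}$ is disconnected. Crucially, $E_0\textbf{e}_u=E_0\textbf{e}_v=\tfrac1n\textbf{1}$ for every pair of vertices, so the $\textbf{1}$-eigenspace always contributes the sign $+1$.

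Recall that for real symmetric $H$ the vertices $u,v$ are strongly cospectral exactly when $E_j\textbf{e}_u=\pm E_j\textbf{e}_v$ for every $j$. When $\overline{X}$ is connected there is no merge, the idempotents of $N$ and $N'$ coincide, and the equivalence is immediate. When $\overline{X}$ is disconnected, with components whose generic member containing $w$ I write $C(w)$, the merged idempotent is $F_{i_0}=E_0+E_{j_0}$, the projection onto the span of the component indicators, so that $F_{i_0}\textbf{e}_w=\tfrac{1}{|C(w)|}\textbf{1}_{C(w)}$ and $E_{j_0}\textbf{e}_w=\tfrac{1}{|C(w)|}\textbf{1}_{C(w)}-\tfrac1n\textbf{1}$. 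The only subtlety is this merged block. For the direction starting from strong cospectrality with respect to $N'$, the relation $F_{i_0}\textbf{e}_u=\pm F_{i_0}\textbf{e}_v$ between two nonnegative scaled indicator vectors forces $u,v$ into the same component, whence $F_{i_0}\textbf{e}_u=F_{i_0}\textbf{e}_v$; subtracting $E_0$ gives $E_{j_0}\textbf{e}_u=E_{j_0}\textbf{e}_v$, and strong cospectrality descends to $N$ without further work.

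The reverse direction is the main obstacle. Starting from strong cospectrality with respect to $N$, every unmerged idempotent transfers immediately, and for the merged block $F_{i_0}\textbf{e}_u=\tfrac1n\textbf{1}+\epsilon_{j_0}E_{j_0}\textbf{e}_v$ equals $\pm F_{i_0}\textbf{e}_v$ precisely when $\epsilon_{j_0}=+1$ (or $E_{j_0}\textbf{e}_v=\textbf{0}$). Here $\epsilon_{j_0}=-1$ would force $\overline{X}$ to have exactly two components of equal size $n/2$ with $u,v$ in different ones, so that $X$ is the join of the complements of the two components; since the off-diagonal block of $N$ is then a full all-ones block, every eigenvector of $N$ outside the span of the two component indicators is supported on a single part, and $E_j\textbf{e}_u=\pm E_j\textbf{e}_v$ forces $E_j\textbf{e}_u=\textbf{0}$ for all $j\neq 0,j_0$. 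This yields $|\sigma_u(N)|\le 2$, contradicting Theorem \ref{esupp+}, so $\epsilon_{j_0}=+1$ and strong cospectrality transfers. I expect the bookkeeping (the eigenvalue relabelling and the join computation) to be routine; the genuinely delicate point is this disconnected-complement merge, where the eigenvalue-support bound of Theorem \ref{esupp+} is exactly the tool that excludes the bad sign, the lone genuine exception being $K_2$, for which $m=2$ lies outside that bound.
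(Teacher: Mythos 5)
Your proof is correct and, at its core, starts from the same reduction as the paper: regularity (or $\beta=-\gamma$) makes $M(X)$ and $M(\overline{X})$ affine in $A(X)$ and $A(\overline{X})$ (respectively $L(X)$ and $L(\overline{X})$), and the complement identities together with the common eigenvector $\textbf{1}$ tie the two spectral decompositions together. Where you part ways is that the paper simply asserts that $M(X)$ and $M(\overline{X})$ ``have the same set of orthogonal projection matrices'' and stops; that assertion is literally false when $\overline{X}$ is disconnected. For $X=C_4$, for instance, the idempotents of $A(C_4)$ for the eigenvalues $2$ and $-2$ merge into the single idempotent of $A(2K_2)$ for the eigenvalue $1$ --- exactly the coalescence $E_0+E_{j_0}$ you isolate, which occurs precisely when $\overline{X}$ is disconnected. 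Your treatment of this merged block is sound in both directions: the forward direction via the nonnegativity of the scaled component indicators, and the reverse direction via the observation that the bad sign $\epsilon_{j_0}=-1$ forces $\overline{X}$ to have two equal components, hence $X$ to be a join, hence $E_j\textbf{e}_u$ and $E_j\textbf{e}_v$ to have disjoint supports for $j\neq 0,j_0$, giving $|\sigma_u(N)|\leq 2$ and contradicting Theorem \ref{esupp+}. This is precisely the argument needed to close the gap in the published proof. Your caveat about $K_2$ is also genuine: its two vertices are strongly cospectral with respect to $A(K_2)$ but not with respect to $A(O_2)$ (and $\mathcal{A}(O_2)$ is undefined), so the statement silently relies on the paper's convention of treating disconnected graphs componentwise. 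In short, you have not merely reproduced the paper's proof; you have repaired it.
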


\begin{proof}
If $X$ is regular, then $\textbf{A}(X)=(\alpha+\beta k)I+\gamma A(X)\ \text{and}\ \textbf{A}(\overline{X})=(\alpha+\beta k-\gamma)I+\gamma J-\gamma A(X)$. Similarly, $\mathcal{A}(X)=\alpha I+\frac{\gamma}{k}A(X)\ \text{and}\ \mathcal{A}(\overline{X})=\left(\alpha-\frac{\gamma}{n-k-1}\right) I+\left(\frac{\gamma}{n-k-1}\right)J-\left(\frac{\gamma}{n-k-1}\right)A(X)$. Since $\textbf{1}$ is an eigenvector for $A(X)$, these equations imply that $\textbf{A}(X)$ and $\textbf{A}(\overline{X})$, as well as $\mathcal{A}(X)$ and $\mathcal{A}(\overline{X})$, have the same set of orthogonal projection matrices in their spectral decompositions. Thus, (\ref{compA}) holds. If $\beta=-\gamma$, then the same argument holds as $\textbf{1}$ is an eigenvector for both $L(X)$ and $L(\overline{X})$.
\end{proof}

Next, we give another property of strong cospectrality when the graph is bipartite. For brevity, we write $\textbf{A}_{-\gamma}=\alpha I+\beta D-\gamma A$ and $\mathcal{A}_{-\gamma}=\alpha I-\gamma D^{-\frac{1}{2}}D^{-\frac{1}{2}}$, and use $M_{-\gamma}$ to denote either $\textbf{A}_{-\gamma}$ or $\mathcal{A}_{-\gamma}$.

\begin{theorem}
\label{bip}
Let $X$ be a simple connected weighted bipartite graph. Then $u$ and $v$ are strongly cospectral with respect to $M$ if and only if they are strongly cospectral with respect to $M_{-\gamma}$. In particular, $\sigma_{uv}^+(M)=\sigma_{uv}^+(M_{-\gamma})$ if $u$ and $v$ belong to the same partite set in $X$, and $\sigma_{uv}^+(M)=\sigma_{uv}^-(M_{-\gamma})$ otherwise.
\end{theorem}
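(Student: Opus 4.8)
The plan is to realize the passage from $M$ to $M_{-\gamma}$ as conjugation by a signature matrix coming from the bipartition, and then push this conjugation through the spectral decomposition while tracking signs. Fix a bipartition $V(X)=V_1\cup V_2$ and let $S$ be the real diagonal matrix with $S_{w,w}=1$ for $w\in V_1$ and $S_{w,w}=-1$ for $w\in V_2$, so that $S=S^T$ and $S^2=I$. Since $X$ is simple and every edge of a bipartite graph joins $V_1$ to $V_2$, each nonzero off-diagonal entry $(A)_{w,w'}$ is multiplied by $S_{w,w}S_{w',w'}=-1$ under conjugation, and thus $SAS=-A$; meanwhile $S$ fixes every diagonal matrix, so $SDS=D$, $SIS=I$, and $SD^{-\frac{1}{2}}S=D^{-\frac{1}{2}}$ (this holds whether $D^{-\frac{1}{2}}$ is real or purely imaginary, since diagonal matrices commute and $S^2=I$). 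Substituting into (\ref{gam}) and (\ref{gnam}) gives $S\textbf{A}S=\alpha I+\beta D-\gamma A=\textbf{A}_{-\gamma}$ and $S\mathcal{A}S=\alpha I-\gamma D^{-\frac{1}{2}}AD^{-\frac{1}{2}}=\mathcal{A}_{-\gamma}$; in both cases $SMS=M_{-\gamma}$, and both matrices are real symmetric.

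Next I would transport the spectral decomposition. Writing $M=\sum_j\lambda_jE_j$ and using $S^{-1}=S$, we get $M_{-\gamma}=SMS=\sum_j\lambda_j(SE_jS)$. The matrices $F_j:=SE_jS$ are Hermitian, idempotent, pairwise orthogonal, and sum to $I$, so they are precisely the orthogonal projections in the spectral decomposition of $M_{-\gamma}$; in particular $M$ and $M_{-\gamma}$ share the eigenvalues $\lambda_j$, with idempotents related by $F_j=SE_jS$.

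The key computation is then a sign-tracking step. Write $s_w:=S_{w,w}\in\{1,-1\}$; since $S$ is diagonal, $S\textbf{e}_w=s_w\textbf{e}_w$, whence $F_j\textbf{e}_w=SE_jS\textbf{e}_w=s_w\,SE_j\textbf{e}_w$ for $w\in\{u,v\}$. Suppose $u$ and $v$ are strongly cospectral with respect to $M$; by the real-symmetric characterization there are signs $\epsilon_j\in\{1,-1\}$ with $E_j\textbf{e}_u=\epsilon_jE_j\textbf{e}_v$ for each $j$. Then
\begin{equation*}
F_j\textbf{e}_u=s_u\,SE_j\textbf{e}_u=s_u\epsilon_j\,SE_j\textbf{e}_v=\epsilon_js_us_v\,F_j\textbf{e}_v,
\end{equation*}
and since $\epsilon_js_us_v\in\{1,-1\}$ this shows $u$ and $v$ are strongly cospectral with respect to $M_{-\gamma}$. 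The converse is identical because $SM_{-\gamma}S=M$, so the roles of $M$ and $M_{-\gamma}$ are symmetric. Finally, the ``in particular'' statement is read off from the sign $\epsilon_js_us_v$: if $u$ and $v$ lie in the same partite set then $s_us_v=1$, so $E_j\textbf{e}_u=E_j\textbf{e}_v$ if and only if $F_j\textbf{e}_u=F_j\textbf{e}_v$, giving $\sigma_{uv}^+(M)=\sigma_{uv}^+(M_{-\gamma})$; if they lie in different partite sets then $s_us_v=-1$ and the sign flips, giving $\sigma_{uv}^+(M)=\sigma_{uv}^-(M_{-\gamma})$.

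I expect no deep obstacle here, as the argument is structurally short. The only points demanding care are verifying $SMS=M_{-\gamma}$ in the normalized case --- in particular that $SD^{-\frac{1}{2}}S=D^{-\frac{1}{2}}$ and that $\mathcal{A}$ remains real symmetric, so that the $\pm$ form of strong cospectrality is available --- and bookkeeping the product $\epsilon_js_us_v$ so that the same/different partite-set dichotomy emerges with the correct $+/-$ labels.
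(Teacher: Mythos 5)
Your proposal is correct and follows essentially the same route as the paper: conjugation by the bipartition signature matrix $S$ to get $SMS=M_{-\gamma}$, transporting the spectral idempotents via $E_j\mapsto SE_jS$, and reading off the $\pm$ signs from $s_us_v$. The extra care you take with $SD^{-\frac{1}{2}}S=D^{-\frac{1}{2}}$ in the normalized case is a harmless elaboration of what the paper leaves implicit.
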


\begin{proof}
Let $X_1$ and $X_2$ be the partite sets of $X$. Define the diagonal matrix $S$ with $(S)_{u,u}=1$ if $u\in X_1$ and $(S)_{u,u}=-1$ otherwise. Then $S^2=I$ and $SAS=-A$, and so $S\textbf{A}S=\textbf{A}_{-\gamma}\quad \text{and}\quad S\mathcal{A}S=\mathcal{A}_{-\gamma}$. Let $\textbf{A}=\sum_j\lambda_jE_j$ be the spectral decomposition of $\textbf{A}$. Then $\textbf{A}_{-\gamma}=\sum_j\lambda_jSE_jS$ is the spectral decomposition of $\textbf{A}_{-\gamma}$. If $u$ and $v$ are strongly cospectral with respect to $\textbf{A}$, then $E_j\textbf{e}_u=\pm E_j\textbf{e}_v$ for each $j$. Since $S\textbf{e}_u=\pm \textbf{e}_u$, we get that $SE_jS\textbf{e}_u=\pm SE_jS\textbf{e}_v$. In particular, if $E_j\textbf{e}_u=E_j\textbf{e}_v$, then  $SE_jS\textbf{e}_u=E_jS\textbf{e}_v$ whenever $u$ and $v$ are in the same partite set in $X$. Otherwise, $SE_jS\textbf{e}_u=-E_jS\textbf{e}_v$. The same argument applies to $\mathcal{A}$.
\end{proof}

An immediate consequence of Theorem \ref{bip} is that strongly cospectrality with respect to the Laplacian and signless Laplacian matrices are equivalent for bipartite graphs.

Now, combining Lemma \ref{sciffcp} and Corollary \ref{cosptw} yields the next result.

\begin{corollary}
\label{par}
Let $u$ and $v$ be twins in $X$. With respect to $M$, $u$ and $v$ are strongly cospectral if and only if they are parallel.
\end{corollary}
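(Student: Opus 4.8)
The plan is to derive Corollary \ref{par} directly from two already-established results: Theorem \ref{sciffcp}, which states that $u$ and $v$ are strongly cospectral with respect to a Hermitian matrix if and only if they are both cospectral and parallel, and Corollary \ref{cosptw}, which guarantees that twins are automatically cospectral with respect to $M$. Since $M$ is either $\textbf{A}$ or $\mathcal{A}$, both of which are Hermitian, Theorem \ref{sciffcp} applies with $H=M$.

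The argument proceeds in one direction trivially and in the other by invoking cospectrality for free. First I would note the forward implication: if $u$ and $v$ are strongly cospectral with respect to $M$, then by the equivalence of \ref{1} and \ref{2} in Theorem \ref{sciffcp}, they are in particular parallel. This direction does not even require the twin hypothesis. For the converse, suppose $u$ and $v$ are parallel with respect to $M$. Since $u$ and $v$ are twins, Corollary \ref{cosptw} tells us they are cospectral with respect to $M$. Thus $u$ and $v$ are both cospectral and parallel, and applying the equivalence of \ref{2} and \ref{1} in Theorem \ref{sciffcp} once more yields that they are strongly cospectral with respect to $M$.

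The essential point is that the twin hypothesis supplies cospectrality gratis, so the gap between parallelism and strong cospectrality — which in general requires verifying both cospectrality and parallelism — collapses. There is no real obstacle here; the statement is a clean corollary whose only content is the recognition that Corollary \ref{cosptw} discharges exactly the cospectrality half of the characterization in Theorem \ref{sciffcp}. I would present the proof as two or three sentences, making explicit that $M$ being Hermitian is what licenses the use of Theorem \ref{sciffcp}, and that Corollary \ref{cosptw} is what reduces the problem to checking parallelism alone.
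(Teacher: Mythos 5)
Your proposal is correct and matches the paper's own argument exactly: the paper derives this corollary by combining Theorem \ref{sciffcp} (strong cospectrality is equivalent to cospectrality plus parallelism) with Corollary \ref{cosptw} (twins are cospectral with respect to $M$), which is precisely the reduction you describe.
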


For vertices with equal eigenvalue supports containing simple eigenvalues, strong cospectrality and cospectrality are equivalent, while for twin vertices, strong cospectrality and parallelism are equivalent.

Let us now characterize twin vertices that are strongly cospectral.
 
\begin{theorem}
\label{partw}
Let $T=T(\omega,\eta)$ be a set of twins in $X$. Consider $\theta$ defined in (\ref{eval}). The following statements hold with respect to $M$.
\begin{enumerate}
\item \label{contri} Let $u,v\in T$. If
$\Omega$ is an orthogonal set of eigenvectors for $\theta$ such that $\textbf{e}_u-\textbf{e}_v\in \Omega$, then $E_{\theta}\textbf{e}_u=cE_{\theta}\textbf{e}_v$ if and only if $c=-1$. Moreover, if $E_{\theta}\textbf{e}_u=-E_{\theta}\textbf{e}_v$, then $|T|=2$ and either $|\Omega|=1$ or $\textbf{w}^T\textbf{e}_u=\textbf{w}^T\textbf{e}_v=0$ for all $\textbf{w}\in\Omega\backslash\{\textbf{e}_u-\textbf{e}_v\}$.
\item \label{notpar} If $u\in T$ and $v\in V(X)\backslash T$, then $E_{\theta}\textbf{e}_u\neq cE_{\theta}\textbf{e}_v$ for any $c\in\mathbb{R}$.
\item \label{notparr} For all $\mu\in\sigma_u(M(X))$ with $\mu\neq \theta$, $E_{\mu}\textbf{e}_u=E_{\mu}\textbf{e}_v$ for all $u,v\in T$.
\end{enumerate}
\end{theorem}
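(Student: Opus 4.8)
The plan is to extract all three parts from two facts established earlier. By Lemma~\ref{eu-ev}, $\textbf{e}_u-\textbf{e}_{u'}$ is a $\theta$-eigenvector of $M$ for every pair of twins $u,u'\in T$, so $E_\theta(\textbf{e}_u-\textbf{e}_{u'})=\textbf{e}_u-\textbf{e}_{u'}$ while $E_\mu(\textbf{e}_u-\textbf{e}_{u'})=\textbf{0}$ for every eigenvalue $\mu\neq\theta$; and by Corollary~\ref{cosptw} twins are cospectral, so $(E_j)_{u,u}=(E_j)_{u',u'}$ for all $j$, equivalently $\|E_j\textbf{e}_u\|^2=(E_j)_{u,u}=(E_j)_{u',u'}=\|E_j\textbf{e}_{u'}\|^2$ since each $E_j$ is a real symmetric idempotent. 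Part~\ref{notparr} is then immediate, as for $\mu\neq\theta$ the relation $E_\mu(\textbf{e}_u-\textbf{e}_v)=\textbf{0}$ is precisely $E_\mu\textbf{e}_u=E_\mu\textbf{e}_v$.

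For the first assertion of~\ref{contri} I would combine $E_\theta\textbf{e}_u-E_\theta\textbf{e}_v=\textbf{e}_u-\textbf{e}_v\neq\textbf{0}$ with the norm equality $\|E_\theta\textbf{e}_u\|=\|E_\theta\textbf{e}_v\|$. If $E_\theta\textbf{e}_u=cE_\theta\textbf{e}_v$ for a real $c$, then $E_\theta\textbf{e}_v\neq\textbf{0}$ (otherwise both projections vanish, forcing $\textbf{e}_u-\textbf{e}_v=\textbf{0}$), whence $|c|=1$; and $c=1$ would give $E_\theta\textbf{e}_u=E_\theta\textbf{e}_v$, contradicting $\textbf{e}_u-\textbf{e}_v\neq\textbf{0}$. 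Thus $c=-1$.

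For the ``moreover'' clause, assume $E_\theta\textbf{e}_u=-E_\theta\textbf{e}_v$, so the eigenvector relation gives $E_\theta\textbf{e}_u=\tfrac12(\textbf{e}_u-\textbf{e}_v)$ and $\|E_\theta\textbf{e}_u\|^2=\tfrac12$. Were there a third twin $w\in T$, I would compute $E_\theta\textbf{e}_w=E_\theta\textbf{e}_u-(\textbf{e}_u-\textbf{e}_w)=-\tfrac12\textbf{e}_u-\tfrac12\textbf{e}_v+\textbf{e}_w$, whose squared norm is $\tfrac32$; but cospectrality forces $\|E_\theta\textbf{e}_w\|^2=(E_\theta)_{w,w}=(E_\theta)_{u,u}=\tfrac12$, a contradiction, so $|T|=2$. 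For the orthogonality statement, any $\textbf{w}\in\Omega\setminus\{\textbf{e}_u-\textbf{e}_v\}$ is a $\theta$-eigenvector orthogonal to $\textbf{e}_u-\textbf{e}_v$, so $E_\theta\textbf{w}=\textbf{w}$ and self-adjointness give $\textbf{w}^T\textbf{e}_u=\textbf{w}^TE_\theta\textbf{e}_u=\tfrac12\textbf{w}^T(\textbf{e}_u-\textbf{e}_v)=0$, and likewise $\textbf{w}^T\textbf{e}_v=0$; this is vacuous when $|\Omega|=1$.

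Finally, for~\ref{notpar} I would invoke the twin-swapping symmetry. Fixing a second twin $u'\in T$ (available since $T$ has at least two vertices) and letting $P$ be the permutation matrix of the involution from Lemma~\ref{autw} that interchanges $u,u'$ and fixes all other vertices, $P$ preserves $M$ and hence commutes with $E_\theta$. The $u$-coordinate of $E_\theta(\textbf{e}_u-\textbf{e}_{u'})=\textbf{e}_u-\textbf{e}_{u'}$ reads $(E_\theta)_{u,u}-(E_\theta)_{u,u'}=1$, so $(E_\theta\textbf{e}_u)_u\neq(E_\theta\textbf{e}_u)_{u'}$; meanwhile $P\textbf{e}_v=\textbf{e}_v$ (as $v\notin T$) forces $(E_\theta)_{u,v}=(E_\theta)_{u',v}$, i.e.\ $(E_\theta\textbf{e}_v)_u=(E_\theta\textbf{e}_v)_{u'}$. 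Comparing $u$- and $u'$-coordinates in a hypothetical $E_\theta\textbf{e}_u=cE_\theta\textbf{e}_v$ would then give $(E_\theta\textbf{e}_u)_u=(E_\theta\textbf{e}_u)_{u'}$, a contradiction. The spectral bookkeeping is light throughout; I expect the main obstacle to be marshalling the coordinate arguments in the ``moreover'' clause and in~\ref{notpar}, where the conclusion rests on the explicit shape of $E_\theta\textbf{e}_u$ together with the twin symmetry rather than on cospectrality alone.
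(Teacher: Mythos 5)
Your proof is correct, but it takes a genuinely different route from the paper's. The paper proves all three parts by an explicit computation of $E_{\theta}$: it orthogonalizes $\{\textbf{e}_1-\textbf{e}_j\}$ inside the $\theta$-eigenspace, shows every eigenvector of $M$ is constant on the coordinates indexed by $T$, invokes connectedness of $X$ to rule out $[\textbf{1}_m\ \textbf{0}]^T$ being a $\theta$-eigenvector, and then writes $E_{\theta}$ in block form (equations (\ref{uGH})--(\ref{UGH3})) and compares entries. You instead derive everything from three facts already on record: $\textbf{e}_u-\textbf{e}_{u'}$ is a $\theta$-eigenvector for every pair of twins (Lemma \ref{eu-ev}), twins are cospectral so $\|E_j\textbf{e}_u\|^2=(E_j)_{u,u}=(E_j)_{u'\!,u'}=\|E_j\textbf{e}_{u'}\|^2$ (Corollary \ref{cosptw}), and the twin-swapping involution commutes with $M$ and hence with $E_{\theta}$ (Lemma \ref{autw}). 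This makes part (\ref{notparr}) a one-line orthogonality-of-eigenspaces observation, turns the $c=-1$ claim and the $|T|=2$ claim into short norm computations (the value $\|E_\theta\textbf{e}_w\|^2=\tfrac32\neq\tfrac12$ for a hypothetical third twin is a clean contradiction), and replaces the paper's entrywise comparison in part (\ref{notpar}) with the symmetry argument $(E_\theta\textbf{e}_v)_u=(E_\theta\textbf{e}_v)_{u'}$ versus $(E_\theta\textbf{e}_u)_u-(E_\theta\textbf{e}_u)_{u'}=1$. Your version is shorter and avoids the connectivity case analysis entirely; what the paper's computation buys is the explicit block description of $E_{\theta}$ and of $E_\theta\textbf{e}_u$, which you do not recover (though those formulas are not cited elsewhere in the paper). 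One cosmetic point: the ``if and only if $c=-1$'' phrasing of part (\ref{contri}) can only sensibly mean the ``only if'' direction, which is what both you and the paper actually establish.
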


\begin{proof}
Let $m=|T|$, and without loss of generality, suppose the first $|T|$ columns of $M=M(X)$ are indexed by the elements of $T$. By Lemma \ref{eu-ev}, $\{\textbf{e}_1-\textbf{e}_j:j=2,\ldots,m\}$ is a set of eigenvectors of $M$ corresponding to the eigenvalue $\theta$ defined in (\ref{eval}). Orthogonalizing this set yields an orthogonal subset $W=\left\{\textbf{e}_1+\ldots+\textbf{e}_{j-1}-(j-1)\textbf{e}_j:j=2,\ldots,m\right\}$ of eigenvectors for $M$ corresponding to $\theta$. Let $\Omega'$ be an orthogonal set of eigenvectors for $M$, and $\Omega$ be an orthogonal set of eigenvectors for $M$ corresponding to $\theta$ such that $W\subseteq \Omega$. For each $\textbf{w}=(x_1,\ldots,x_n)\in\Omega'\backslash W$, $\textbf{w}\cdot (\textbf{e}_1+\ldots+\textbf{e}_{j-1}-(j-1)\textbf{e}_j)=0$ for every $j=2,\ldots,m$, and so we get
\begin{equation}
\label{bruno}
\textbf{w}=(x,\ldots,x,x_{m+1},\ldots,x_n),
\end{equation}
for some $x,x_{m+1},\ldots, x_n\in\mathbb{R}$. Suppose $W\neq \Omega$ and let $\textbf{w}\in \Omega\backslash W$. If $x\neq 0$ and $x_j=0$ for all $j=m+1,\ldots,n$, then $[\textbf{1}_{m}\ \textbf{ 0}_{n-m}]^T$ is an eigenvector for $M$ corresponding to $\theta$. Observe that we can write $A=\left[
\begin{array}{cc} A_1 & A_2 \\ A_3 & A_4 \end{array} \right]$ where $A_2=A_3^T=[\delta_{1,m+1}\textbf{1}_{m}\ \cdots\ \delta_{1,n}\textbf{1}_{m}]$, where $\delta_{1,j}=0$ if $j\notin N_X(u)$ for each $u\in T$ and $\delta_{1,j}\neq 0$ otherwise. Thus, $\textbf{A}=\left[
\begin{array}{cc} * & * \\ \gamma A_3 & * \end{array} \right]$, and so $\textbf{A}\left[
\begin{array}{cc} \textbf{1}_{m} \\ \textbf{0}_{n-m} \end{array} \right]=\left[
\begin{array}{cc} * \\ \gamma A_3\textbf{1}_{m} \end{array} \right]=\left[
\begin{array}{cc} * \\ \gamma m(\delta_{1,m+1},\ldots,\delta_{1,n})^T \end{array} \right]=\theta\left[
\begin{array}{cc} \textbf{1}_{m} \\ \textbf{0}_{n-m} \end{array} \right]$. Since $m,\gamma\neq 0$, it follows that $\delta_{1,j}=0$ for each $j=m+1,\ldots,n$ so that $X$ is disconnected, which is a contradiction. Applying the same argument to $\mathcal{A}$ also yields the same result. Thus, $x=0$ or $x_j\neq 0$ for at least one $j$ so that if $\textbf{w}\in \Omega\backslash W$, then either $\textbf{w}=(0,\ldots,0,x_{m+1},\ldots,x_n)$ or $\textbf{w}=(1,\ldots,1,x_{m+1},\ldots,x_n)$. This allows us to write $E_{\theta}=E_{W}+E_{\Omega\backslash W}$, where
\begin{equation}
\label{uGH}
E_{W}=\left(I_{m}-\frac{1}{m}\textbf{J}_{m}\right)\oplus \textbf{0}_{n-m}
\end{equation}
and
\begin{equation}
\label{uGH1}
E_{\Omega\backslash W}=\sum_{\textbf{z}\in Z}\frac{1}{\|\textbf{z}\|^2}\left[
\begin{array}{cc} \textbf{0} & \textbf{0} \\ \textbf{0} & \textbf{z}\textbf{z}^T \end{array} \right]+\sum_{\textbf{z}\in Z'}\frac{1}{\|\textbf{z}\|^2}\left[
\begin{array}{cc}J_m & \textbf{1}_{m}\textbf{z}^T \\ (\textbf{z})(\textbf{1}_{m}^T) & \textbf{z}\textbf{z}^T \end{array} \right],
\end{equation}
where $Z=\{\textbf{z}:(\textbf{0}_{m},\textbf{z})\in \Omega\backslash W\}$, $Z'=\{\textbf{z}:
(\textbf{1}_{m},\textbf{z})\in \Omega\backslash W\}$ and $E_{\Omega\backslash W}$ is absent if $W=\Omega$. Note that $Z$ or $Z'$ can be empty, and in case they are nonempty, then they are linearly independent sets.

\begin{figure}[h!]
	\begin{center}
		\begin{tikzpicture}
		\tikzset{enclosed/.style={draw, circle, inner sep=0pt, minimum size=.22cm}}
	   
		\node[enclosed, fill=cyan, label={left, yshift=0cm: $u$}] (v_1) at (0,0.5) {};
		\node[enclosed, fill=cyan, label={left, yshift=0cm: $v$}] (v_2) at (0,-0.5) {};
		\node[enclosed] (v_3) at (0.8,0) {};
		\node[enclosed] (v_4) at (1.8,0) {};
		\node[enclosed, fill=cyan, label={right, yshift=0cm: $w$}] (v_5) at (2.6,0.5) {};
		\node[enclosed, fill=cyan, label={right, yshift=0cm: $x$}] (v_6) at (2.6,-0.5) {};
		
		\draw (v_1) --  (v_3);
		\draw (v_2) --  (v_3);
		\draw (v_3) --  (v_4);
		\draw (v_4) --  (v_5);
		\draw (v_4) --  (v_6);
		\end{tikzpicture}
	\end{center}
	\caption{A tree with four pairwise cospectral vertices marked blue}\label{alpha}
\end{figure}
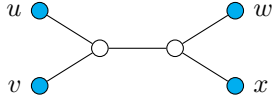

We now prove (1). Let $u\in T$. Using (\ref{uGH}) and (\ref{uGH1}), we obtain
\begin{equation}
\label{UGH2}
E_{\theta}\textbf{e}_u=\left[
\begin{array}{cc} \textbf{y} \\ \textbf{0}_{n-m} \end{array} \right]+\sum_{\textbf{z}\in Z'}\frac{1}{\|\textbf{z}\|^2}\left[\begin{array}{cc} \textbf{1}_m \\ \textbf{z} \end{array} \right]
\end{equation}
where $\textbf{y}=\left(-\frac{1}{m},\ldots,-\frac{1}{m},1-\frac{1}{m},-\frac{1}{m}, \ldots,-\frac{1}{m}\right)^T$ and the entry of $\textbf{y}$ equal to $1-\frac{1}{m}$ is indexed by $u$. Now, let $v\in T\backslash\{u\}$. Using (\ref{UGH2}), we have $E_{\theta}\textbf{e}_u=cE_{\theta}\textbf{e}_v$ for some $c\in\mathbb{R}$ if and only if
\begin{equation*}
c+(c-1)\left(-\frac{1}{m}+\displaystyle\sum_{\textbf{z}\in Z'}\frac{1}{\|\textbf{z}\|^2}\right)=-1+(c-1)\left(-\frac{1}{m}+\displaystyle\sum_{\textbf{z}\in Z'}\frac{1}{\|\textbf{z}\|^2}\right)=0.
\end{equation*}
Equivalently, $c=-1$. Now, comparing the $j$th entries of $E_{\theta}\textbf{e}_u$ and $-E_{\theta}\textbf{e}_v$ for $j\in T$ yields $-\frac{1}{m}=-\frac{m-1}{m}$, which is possible if and only if $m=2$, i.e., $|W|=1$. Moreover, comparing the last $n-m$ entries of $E_{\theta}\textbf{e}_u$ and $-E_{\theta}\textbf{e}_v$ gives us $\sum_{\textbf{z}\in Z'}\frac{1}{\|\textbf{z}\|^2}\textbf{z}=0$. Since $Z'$ is a linearly independent set, it must be that $Z'=\varnothing$. If $Z\neq \varnothing$, then $\textbf{w}^T\textbf{e}_u=\textbf{w}^T\textbf{e}_v=0$ while if $Z=\varnothing$, then $|\Omega|=|W|=1$. This proves (\ref{contri}).

Next, we show (\ref{notpar}). Assume $v\in V(X)\backslash T$. Then (\ref{uGH}) and (\ref{uGH1}) yield
\begin{equation}
\label{UGH3}
E_{\theta}\textbf{e}_v=\sum_{\textbf{z}\in Z}\frac{x_v}{\|\textbf{z}\|^2}\left[\begin{array}{cc} \textbf{0}_m \\ \textbf{z} \end{array} \right]+\sum_{\textbf{z}\in Z'}\frac{x_v}{\|\textbf{z}\|^2}\left[\begin{array}{cc} \textbf{1}_m \\ \textbf{z} \end{array} \right],
\end{equation}
where $x_v$ is the $v$th entry of $\textbf{w}$. If $E_{\theta}\textbf{e}_u=cE_{\theta}\textbf{e}_v$ for some $c\in\mathbb{R}$, then one checks using (\ref{UGH2}) and (\ref{UGH3}) that the entries of $E_{\theta}\textbf{e}_u$ and $cE_{\theta}\textbf{e}_v$ indexed by $T$ yield $-\frac{1}{m}=\frac{m-1}{m}$, a contradiction.

Finally, from Proposition \ref{esupp}, $|\sigma_u(M)|\geq 2$, and so there exists $\mu\in \sigma_u(M)$ with $\mu\neq \theta$. Let $\textbf{w}_1,\ldots,\textbf{w}_s\in\Omega'$ be eigenvectors corresponding to $\mu$. From (\ref{bruno}), $\textbf{w}_j^T\textbf{e}_u=\textbf{w}_j^T\textbf{e}_v$ for all $u,v\in T$. Thus, (\ref{notparr}) holds.
\end{proof}

By Theorem \ref{partw}(\ref{notpar}), a vertex with a twin cannot be parallel to a vertex that is not its twin. However, it is possible for a vertex with a twin to be cospectral with respect to $M$ to a vertex that is not its twin. In Figure \ref{alpha}, $u$ and $v$ are false twins, $w$ is not twins with $u$ and $u$, $v$, and $w$ are pairwise cospectral with respect to $M$. In \cite{Godsil2017}, Godsil and Smith posed an interesting question: is there a simple unweighted tree with three pairwise adjacency strongly cospectral vertices? Emanuel Silva has noted in a private communication that up to 22 vertices, the answer to this question is negative. Nevertheless, the problem remains open.

Combining Theorem \ref{partw} statements 1 and 2 yields the following corollary.

\begin{corollary}
\label{3tw}
Let $T=T(\omega,\eta)$ be a set of twins in $X$. If $|T|\geq 3$, then each $u\in T$ is not parallel, and hence not strongly cospectral, with any $v\in V(X)\backslash\{u\}$ with respect to $M$.
\end{corollary}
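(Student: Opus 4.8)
The plan is to deduce Corollary \ref{3tw} directly from the two structural statements in Theorem \ref{partw}, handling separately the case where the candidate partner $v$ lies inside $T$ and the case where it lies outside $T$. Recall that by Corollary \ref{cosptw} any two twins are cospectral, and by Corollary \ref{par} strong cospectrality between twins is equivalent to parallelism; thus it suffices to rule out parallelism. Since $|T|\geq 3$, we may fix $u\in T$ and let $v\in V(X)\backslash\{u\}$ be arbitrary, and show $E_{\theta}\textbf{e}_u\neq cE_{\theta}\textbf{e}_v$ for every $c\in\mathbb{R}$ (equivalently, that $u$ and $v$ fail to be parallel at the eigenvalue $\theta$ of (\ref{eval})).

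First I would dispose of the case $v\in V(X)\backslash T$: here Theorem \ref{partw}(\ref{notpar}) immediately gives $E_{\theta}\textbf{e}_u\neq cE_{\theta}\textbf{e}_v$ for all real $c$, so $u$ and $v$ are not parallel. Next I would treat $v\in T\backslash\{u\}$ using Theorem \ref{partw}(\ref{contri}): that statement says $E_{\theta}\textbf{e}_u=cE_{\theta}\textbf{e}_v$ forces $c=-1$, and moreover that $E_{\theta}\textbf{e}_u=-E_{\theta}\textbf{e}_v$ forces $|T|=2$. Since we have assumed $|T|\geq 3$, the equality $E_{\theta}\textbf{e}_u=-E_{\theta}\textbf{e}_v$ cannot hold, so no admissible $c$ exists and again $u$ and $v$ are not parallel at $\theta$. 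Combining the two cases, for every $v\in V(X)\backslash\{u\}$ the vectors $E_{\theta}\textbf{e}_u$ and $E_{\theta}\textbf{e}_v$ fail to be parallel, hence $u$ and $v$ are not parallel with respect to $M$.

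Finally I would convert non-parallelism into the failure of strong cospectrality. Because $\theta\in\sigma_u(M)$ (indeed $E_{\theta}\textbf{e}_u\neq 0$ by (\ref{UGH2}), since its $u$-indexed entry is $1-\frac{1}{m}\neq 0$), parallelism of $u$ and $v$ would require $E_{\theta}\textbf{e}_u=cE_{\theta}\textbf{e}_v$ for some $c$; its failure at this single eigenvalue already shows $u$ and $v$ are not parallel. By Theorem \ref{sciffcp}, strong cospectrality implies parallelism, so $u$ and $v$ cannot be strongly cospectral either. As this holds for arbitrary $u\in T$ and arbitrary $v\in V(X)\backslash\{u\}$, the corollary follows.

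The argument is essentially a bookkeeping assembly of the two parts of Theorem \ref{partw}, so the main obstacle is not a hard estimate but rather making sure the quantifiers align: I must check that both cited statements apply to the same eigenvalue $\theta$ and the same orthogonal eigenvector set, and that the hypothesis $|T|\geq 3$ is used in exactly the right place (namely to contradict the $|T|=2$ conclusion of Theorem \ref{partw}(\ref{contri})). One subtlety worth flagging is that the cited statements are phrased for real scalars $c$, which is the relevant notion of parallelism for the real symmetric matrices $\textbf{A}$ and $\mathcal{A}$; since $M$ is real symmetric here, restricting to $c\in\mathbb{R}$ loses no generality, and no separate treatment of complex $c$ is needed.
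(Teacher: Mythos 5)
Your proposal is correct and follows exactly the route the paper intends: the paper derives Corollary \ref{3tw} by "combining Theorem \ref{partw} statements 1 and 2," which is precisely your case split ($v\in T\backslash\{u\}$ handled by Theorem \ref{partw}(\ref{contri}), whose $|T|=2$ conclusion is contradicted by $|T|\geq 3$; $v\in V(X)\backslash T$ handled by Theorem \ref{partw}(\ref{notpar})), followed by the observation that strong cospectrality implies parallelism. Your remarks on the real-scalar reduction and on checking $E_\theta\textbf{e}_u\neq 0$ are sound and only make explicit what the paper leaves implicit.
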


Combining Corollary \ref{3tw} with Lemma \ref{autw}, we see that a graph with involution does not automatically give rise to pairs of strongly cospectral vertices, contrary to the claim of Kempton, Lippner and Yau \cite{Kempton}.

Strong cospectrality in general is not a monogamous property. Indeed, the four degree two vertices in $K_2\square P_3$ (see Figure \ref{KP}) are pairwise strongly cospectral with respect to $M=\textbf{A}$. However, by Corollary \ref{3tw}, strong cospectrality that involves a vertex with a twin is monogamous. Following the proof of \cite[Lemma 4.3]{Godsil2017}, we get an upper bound for the number of columns in a Hermitian matrix that are strongly cospectral.

\begin{theorem}
Let $H$ be an $m\times m$ Hermitian matrix and $u\in\{1,\ldots,m\}$. Then the number of distinct columns $v$ such that $u$ and $v$ are parallel with respect to $H$ and $\sigma_v(H)=\sigma_u(H)$ is at most $|\sigma_u(H)|-1$.
\end{theorem}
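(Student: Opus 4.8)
The plan is to reduce the count to a dimension bound on the $H$-module generated by $\textbf{e}_u$, following the strategy of \cite[Lemma 4.3]{Godsil2017}. First I would invoke Theorem \ref{parchar}(\ref{p2}): a column $v$ is parallel to $u$ with respect to $H$ and satisfies $\sigma_v(H)=\sigma_u(H)$ precisely when the $H$-module generated by $\textbf{e}_v$ equals the $H$-module $\mathcal{U}$ generated by $\textbf{e}_u$. Consequently, for every such $v$ the characteristic vector $\textbf{e}_v$ lies in the single fixed subspace $\mathcal{U}$.

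Next I would pin down $\dim\mathcal{U}$. By the computation in (\ref{11}), the vectors $\{E_j\textbf{e}_u:\lambda_j\in\sigma_u(H)\}$ form an orthogonal basis of $\mathcal{U}$: they are nonzero exactly for $\lambda_j\in\sigma_u(H)$ by the definition of the eigenvalue support, and projections onto distinct eigenspaces are mutually orthogonal. Hence $\dim\mathcal{U}=|\sigma_u(H)|$.

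The key observation is then an orthonormal-set-in-a-subspace bound. The characteristic vectors $\textbf{e}_v$, as $v$ ranges over the distinct columns in question, are pairwise orthogonal unit vectors (being distinct standard basis vectors), and each one lies in $\mathcal{U}$. An orthonormal set inside a subspace of dimension $|\sigma_u(H)|$ has at most $|\sigma_u(H)|$ members. Since $\textbf{e}_u$ itself is one of these vectors, the remaining columns $v\neq u$ number at most $|\sigma_u(H)|-1$, giving the claimed bound.

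I do not expect a serious obstacle: once Theorem \ref{parchar}(\ref{p2}) places all the relevant $\textbf{e}_v$ in a common $|\sigma_u(H)|$-dimensional module, the conclusion is immediate from linear independence of orthonormal vectors. The only point requiring care is the off-by-one bookkeeping, namely distinguishing the full collection of qualifying columns (which includes $u$ and so has at most $|\sigma_u(H)|$ elements) from the collection of distinct partners $v\neq u$ that the statement bounds by $|\sigma_u(H)|-1$; the two-vertex case $X\cong K_2$ shows the latter bound is tight.
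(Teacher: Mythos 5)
Your argument is correct and is essentially the proof the paper intends: the paper derives this bound by "following the proof of \cite[Lemma 4.3]{Godsil2017}," which is precisely the walk-module argument you give — Theorem \ref{parchar}(\ref{p2}) places every qualifying $\textbf{e}_v$ in the $|\sigma_u(H)|$-dimensional module spanned by the nonzero $E_j\textbf{e}_u$, and linear independence of distinct standard basis vectors plus the fact that $\textbf{e}_u$ occupies one slot yields the bound. Your handling of the off-by-one point ($v\neq u$) is also the right reading of the statement.
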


We mention an interesting result of {\'{A}}rnad{\'{o}}ttir and Godsil which states that if $X$ is a normal Cayley graph and $n$ is the largest multiplicity of an eigenvalue of $A$, then the number of pairwise adjacency strongly cospectral vertices is bounded above by $|V (X)|/n$ \cite[Theorem 6.1]{Arn2021}. It would be interesting to check whether this bound holds for other classes of graphs, and in general whenever $M=\textbf{A}$. Cayley graphs are vertex-transitive, and so any pair of vertices in a Cayley graph are cospectral, which makes them promising candidates for graphs that exhibit strong cospectrality.

Next, we state the following interesting observation.

\begin{proposition}
\label{prop}
Let $H$ be an $m\times m$ Hermitian matrix. If all columns of $H$ are strongly cospectral, then $|(E_j)_{u,v}|=\frac{1}{m}$ and $(E_j)_{u,u}=\frac{1}{m}$ for all $u,v$. If we add that $H$ is real symmetric, then $(E_j)_{u,v}=\pm \frac{1}{m}$ for all $u,v$, and $m$ is even.
\end{proposition}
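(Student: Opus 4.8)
The plan is to work directly with the spectral idempotents in the decomposition $H=\sum_j\lambda_jE_j$, using that each $E_j$ is Hermitian, idempotent, and that they sum to $I$. First I would note that since all columns are pairwise strongly cospectral they are in particular pairwise cospectral, so by the very definition of cospectrality the diagonal entries $(E_j)_{u,u}$ share a common value $d_j:=(E_j)_{u,u}$ for every index $u$ and every $j$.

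The crucial step ties the off-diagonal entries to $d_j$. Writing $E_j=E_j^*E_j$ (Hermitian idempotent) gives $(E_j)_{u,v}=\textbf{e}_u^*E_j^*E_j\textbf{e}_v=(E_j\textbf{e}_u)^*(E_j\textbf{e}_v)$, and strong cospectrality supplies $E_j\textbf{e}_u=cE_j\textbf{e}_v$ with $|c|=1$, so $(E_j)_{u,v}=\bar c\,\|E_j\textbf{e}_v\|^2=\bar c\,d_j$ and hence $|(E_j)_{u,v}|=d_j$ for all $u,v$. To pin down $d_j$ I would expand the $(u,u)$ entry of $E_j=E_j^2$: since $(E_j)_{w,u}=\overline{(E_j)_{u,w}}$, we get $d_j=(E_j^2)_{u,u}=\sum_w|(E_j)_{u,w}|^2=m\,d_j^2$, forcing $d_j\in\{0,1/m\}$. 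A nonzero projection cannot have every entry zero, so $d_j\neq0$, leaving $d_j=1/m$; this gives $(E_j)_{u,u}=1/m$ and $|(E_j)_{u,v}|=1/m$.

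In the real symmetric case each $E_j$ is real and strong cospectrality forces $c=\pm1$, so $(E_j)_{u,v}=\pm d_j=\pm\frac1m$. For the parity claim I would observe that $\operatorname{tr}(E_j)=m\cdot\frac1m=1$, so each $E_j$ is a rank-one real symmetric projection $E_j=\textbf{w}_j\textbf{w}_j^T$ with $\|\textbf{w}_j\|=1$; comparing entries shows every coordinate of $\textbf{w}_j$ equals $\pm1/\sqrt m$, so $\textbf{v}_j:=\sqrt m\,\textbf{w}_j$ is a $\pm1$ vector. Because every eigenvalue is then simple, $H$ has $m$ distinct eigenvalues, and (taking $m\geq2$) picking two of them yields orthogonal eigenvectors $\textbf{w}_j,\textbf{w}_k$; the identity $\textbf{v}_j^T\textbf{v}_k=0$ is a sum of $m$ terms each equal to $\pm1$, which can vanish only with an equal number of $+1$'s and $-1$'s, so $m$ is even. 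Equivalently, the matrix with columns $\textbf{v}_j$ is a Hadamard matrix of order $m$.

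The entrywise idempotency computations are routine; the content lies in observing that Hermitian idempotency together with $|c|=1$ forces every $|(E_j)_{u,v}|$ to equal the common diagonal value, and, in the real case, that the orthogonality of two $\pm1$ vectors is precisely the parity obstruction giving $m$ even. The main point requiring care is tracking the conjugations in the genuinely Hermitian (non-symmetric) case and ensuring $m\geq2$ so that a second eigenvalue is available for the parity argument.
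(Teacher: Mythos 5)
Your proof is correct, and it reaches the same conclusions by a somewhat different route than the paper. The paper's proof pivots immediately on rank: since all columns of $E_j$ are pairwise parallel, each $E_j$ has rank one, so all eigenvalues of $H$ are simple and $E_j=\textbf{v}_j\textbf{v}_j^*$ for a unit eigenvector $\textbf{v}_j$; unimodularity of the parallelism constants then forces every entry of $\textbf{v}_j$ to have magnitude $1/\sqrt{m}$, and the entry claims follow. You instead work entrywise with Hermitian idempotency: the identity $(E_j)_{u,v}=(E_j\textbf{e}_u)^*(E_j\textbf{e}_v)=\bar c\,d_j$ gives $|(E_j)_{u,v}|=d_j$ for the common diagonal value $d_j$, and $d_j=(E_j^2)_{u,u}=m\,d_j^2$ pins down $d_j=1/m$ without ever invoking simplicity of the spectrum; you only recover the rank-one structure at the end, from $\operatorname{tr}(E_j)=1$, when you need it for the parity argument. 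The two parity arguments are essentially the same (orthogonality of two $\pm 1/\sqrt{m}$ sign patterns, equivalently the vanishing of $(E_jE_\ell)_{u,u}$ as a sum of $m$ terms $\pm 1/m^2$). What your version buys is a cleaner separation of the two halves of the statement: the magnitude claims come purely from idempotency plus unimodular parallelism, and only the evenness of $m$ uses the existence of a second eigenvalue --- a hypothesis you rightly flag as requiring $m\geq 2$, a degenerate case the paper's proof passes over silently (for $m=1$ the evenness claim fails trivially). The closing observation that the normalized eigenvector matrix is a Hadamard matrix of order $m$ is a nice bonus not in the paper.
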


\begin{proof}
Suppose all columns of $H$ are strongly cospectral. Then for any columns $u$ and $v$ of $H$, we have $E_j\textbf{e}_u=cE_j\textbf{e}_v$ for some $c\in\mathbb{C}$ with $|c|=1$. Thus, $\sigma_u(H)=\sigma_v(H)$ and each $E_r$ is a rank one matrix, which implies that all eigenvalues of $H$ are simple. As a result, if $\{\textbf{v}_1,\ldots,\textbf{v}_m\}$ is an orthonormal set of eigenvectors for $H$, then $E_j=\textbf{v}_j\textbf{v}_j^*$ for each $j$. Since $E_{j}\textbf{e}_u=(\textbf{v}_j^*\textbf{e}_u)\textbf{v}_j$ and $E_{j}\textbf{e}_v=(\textbf{v}_j^*\textbf{e}_v)\textbf{v}_j$, it follows that $E_j\textbf{e}_u=(\textbf{v}_j^*\textbf{e}_u/\textbf{v}_j^*\textbf{e}_v)E_j\textbf{e}_v$. Thus, we may take $c=\textbf{v}_j^*\textbf{e}_u/\textbf{v}_j^*\textbf{e}_v$, which implies that $|\textbf{v}_j^*\textbf{e}_u/\textbf{v}_j^*\textbf{e}_v|=1$, or equivalently, $|\textbf{v}_j^*\textbf{e}_u|=|\textbf{v}_j^*\textbf{e}_v|$. Simply put, every entry of $\textbf{v}_j$ has magnitude $\frac{1}{\sqrt{m}}$, and consequently, every entry of $E_j$ has magnitude $\frac{1}{m}$. In particular, each diagonal entry of $E_j$ is equal to $\frac{1}{m}$. Moreover, if $H$ is real symmetric, then every off-diagonal entry of $E_j$ is equal to $\pm\frac{1}{m}$, and because $E_jE_{\ell}=0$ for $j\neq \ell$, it follows that $m$ must be even.
\end{proof}

\begin{example}
\label{godsil}
Consider the real symmetric matrix
\begin{equation}
\label{Hhh}
H=\left[
\begin{array}{cccc} 0&1&3&0 \\ 1&0&0&3 \\ 3&0&0&1 \\ 0&3&1&0 \end{array} \right].
\end{equation}
The eigenvalues of $H$ are $4$, $-4$, $2$ and $-2$ with associated eigenvectors $\frac{1}{2}\textbf{1}$, $\frac{1}{2}[1,-1,-1,1]^T$, $\frac{1}{2}[1,-1,1,-1]^T$ and $\frac{1}{2}[1,1,-1,-1]^T$ which altogether forms an orthonormal set. Since each eigenvalue of $H$ is simple, every pair of columns of $H$ are parallel. Moreover, one checks that $(E_j)_{u,u}=\frac{1}{4}$ for every column $u$. Consequently, all columns of $H$ are strongly cospectral.
\end{example}

In \cite[Lemma 10.1]{Godsil2017}, Godsil showed that $K_2$ is the only simple unweighted graph that exhibits adjacency strong cospectrality between every pair of vertices. However, this is not true for the weighted case. Indeed, if we take $X\cong C_4(1,3,1,3)$ in Figure \ref{cycle}, then $A=H$ in (\ref{Hhh}). Since $\textbf{A}=(\alpha+4\beta)I+\gamma A$, $\textbf{A}$ and $A$ share the same eigenvectors. By Example (\ref{godsil}), all columns of $\textbf{A}$ are strongly cospectral.

Next, combining Theorem \ref{partw} and Corollary \ref{par}, we acquire a spectral characterization of twin vertices that are strongly cospectral.

\begin{corollary}
\label{strcospchar}
Let $T(\omega,\eta)=\{u,v\}$ be a set of twins in $X$, and consider $\theta$ in (\ref{eval}). If $\Omega$ is an orthogonal set of eigenvectors for $\theta$ such that $\textbf{e}_u-\textbf{e}_v\in \Omega$, then $u$ and $v$ are strongly cospectral with respect to $M$ if and only if either $|\Omega|=1$ or $\textbf{w}^T\textbf{e}_u=\textbf{w}^T\textbf{e}_v=0$ for all $\textbf{w}\in\Omega\backslash\{\textbf{e}_u-\textbf{e}_v\}$. Moreover, if $u$ and $v$ are strongly cospectral with respect to $M$, then $\sigma_{uv}^-(M)=\{\theta\}$, $\sigma_{uv}^+(M)=\sigma_u(M)\backslash \{\theta\}$, and $u$ and $v$ cannot be strongly cospectral with any $w\in V(X)\backslash\{u,v\}$.
\end{corollary}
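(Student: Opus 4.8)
The plan is to read off the corollary as the $|T|=2$ specialization of Theorem \ref{partw}, glued to Corollary \ref{par}. First I would invoke Corollary \ref{par}: since $u$ and $v$ are twins, strong cospectrality with respect to $M$ is equivalent to parallelism, so it suffices to decide when $E_\mu\textbf{e}_u$ and $E_\mu\textbf{e}_v$ are parallel for every $\mu\in\sigma_u(M)$. By Theorem \ref{partw}(\ref{notparr}), for every $\mu\neq\theta$ one has $E_\mu\textbf{e}_u=E_\mu\textbf{e}_v$, so parallelism there is automatic (with constant $1$); since $u,v$ are cospectral their supports agree, so the pair is parallel if and only if $E_\theta\textbf{e}_u=cE_\theta\textbf{e}_v$ for some real $c$. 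Thus the whole question collapses onto the single eigenvalue $\theta$.

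Next I would apply Theorem \ref{partw}(\ref{contri}) at $\theta$ (here $|T|=m=2$, so its orthogonalized set is $W=\{\textbf{e}_u-\textbf{e}_v\}$). It says the only admissible constant is $c=-1$, and that $E_\theta\textbf{e}_u=-E_\theta\textbf{e}_v$ forces either $|\Omega|=1$ or $\textbf{w}^T\textbf{e}_u=\textbf{w}^T\textbf{e}_v=0$ for all $\textbf{w}\in\Omega\backslash\{\textbf{e}_u-\textbf{e}_v\}$. Chaining this with the previous paragraph gives the forward direction of the biconditional. For the converse I would run the computation in the proof of Theorem \ref{partw} in reverse: the stated condition on $\Omega$ means exactly that the set $Z'$ is empty, so formula (\ref{UGH2}) with $m=2$ collapses to $E_\theta\textbf{e}_u=\tfrac12(\textbf{e}_u-\textbf{e}_v)$ and symmetrically $E_\theta\textbf{e}_v=-\tfrac12(\textbf{e}_u-\textbf{e}_v)$; hence $E_\theta\textbf{e}_u=-E_\theta\textbf{e}_v$, and combined with the automatic parallelism off $\theta$, the vertices are parallel, thus strongly cospectral.

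For the \textbf{Moreover} clause, once strong cospectrality holds I already know $E_\theta\textbf{e}_u=-E_\theta\textbf{e}_v$ and $E_\mu\textbf{e}_u=E_\mu\textbf{e}_v$ for every $\mu\neq\theta$, so $\theta\in\sigma_{uv}^-(M)$ while every other support eigenvalue lies in $\sigma_{uv}^+(M)$. Feeding this into $\sigma_u(M)=\sigma_{uv}^+(M)\cup\sigma_{uv}^-(M)$ yields $\sigma_{uv}^-(M)=\{\theta\}$ and $\sigma_{uv}^+(M)=\sigma_u(M)\backslash\{\theta\}$. Finally, monogamy is immediate from Theorem \ref{partw}(\ref{notpar}): any $w\in V(X)\backslash\{u,v\}=V(X)\backslash T$ satisfies $E_\theta\textbf{e}_u\neq cE_\theta\textbf{e}_w$ for every real $c$, so $u$ (and by symmetry $v$) is not parallel, hence not strongly cospectral, with $w$.

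The step I expect to be the main obstacle is the converse, because Theorem \ref{partw}(\ref{contri}) is phrased only as the one-way implication $E_\theta\textbf{e}_u=-E_\theta\textbf{e}_v\Rightarrow(\Omega\text{-condition})$. I must be careful to pull the explicit expression for $E_\theta\textbf{e}_u$ from (\ref{UGH2}) and verify that the vanishing of the $u,v$-entries of the remaining eigenvectors genuinely \emph{produces} $E_\theta\textbf{e}_u=-E_\theta\textbf{e}_v$, rather than being merely necessary for it; the $|\Omega|=1$ case is then just the degenerate instance where $\Omega\backslash\{\textbf{e}_u-\textbf{e}_v\}$ is empty and the condition holds vacuously.
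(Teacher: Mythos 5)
Your proposal is correct and matches the paper's own (very terse) justification, which simply combines Corollary \ref{par} with Theorem \ref{partw}: parallelism off $\theta$ is automatic by Theorem \ref{partw}(\ref{notparr}), the behaviour at $\theta$ is governed by Theorem \ref{partw}(\ref{contri}), and monogamy comes from Theorem \ref{partw}(\ref{notpar}). Your observation that the converse direction is not literally contained in the statement of Theorem \ref{partw}(\ref{contri}) and must be extracted from the explicit formula (\ref{UGH2}) (the stated $\Omega$-condition forces $Z'=\varnothing$, hence $E_\theta\textbf{e}_u=\tfrac12(\textbf{e}_u-\textbf{e}_v)=-E_\theta\textbf{e}_v$) is exactly the detail the paper leaves implicit, and you fill it in correctly.
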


If the eigenvalue $\theta$ in Corollary \ref{strcospchar} is simple, then we get the following result.

\begin{corollary}
\label{strcospcharsimp}
Let $T(\omega,\eta)=\{u,v\}$ be a set of twins in $X$, and consider $\theta$ in (\ref{eval}). If $\theta$ is a simple eigenvalue of $M$, then $u$ and $v$ are strongly cospectral with respect to $M$, and $E_{\theta}=\frac{1}{2}(\textbf{e}_u-\textbf{e}_v)(\textbf{e}_u-\textbf{e}_v)^T$.
\end{corollary}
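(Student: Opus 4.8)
The plan is to read off both conclusions directly from the machinery already assembled, since simplicity of $\theta$ collapses almost all of the case analysis in the proof of Theorem \ref{partw}. First I would invoke Lemma \ref{eu-ev}: because $T(\omega,\eta)=\{u,v\}$ is a set of twins, the vector $\textbf{e}_u-\textbf{e}_v$ is an eigenvector of $M$ associated to the eigenvalue $\theta$ defined in (\ref{eval}). The key observation is then that, since $\theta$ is a \emph{simple} eigenvalue of $M$, its eigenspace is one-dimensional and hence spanned by $\textbf{e}_u-\textbf{e}_v$.

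Next I would deduce strong cospectrality. Let $\Omega$ be any orthogonal set of eigenvectors for $\theta$ with $\textbf{e}_u-\textbf{e}_v\in\Omega$. By the previous paragraph the $\theta$-eigenspace has dimension one, so necessarily $\Omega=\{\textbf{e}_u-\textbf{e}_v\}$, i.e.\ $|\Omega|=1$. This is exactly the first of the two equivalent conditions in Corollary \ref{strcospchar}, and so that corollary immediately yields that $u$ and $v$ are strongly cospectral with respect to $M$ (and incidentally that $\sigma_{uv}^-(M)=\{\theta\}$).

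It then remains to compute $E_{\theta}$. Since $E_{\theta}$ is the orthogonal projection onto the one-dimensional eigenspace spanned by the real vector $\textbf{e}_u-\textbf{e}_v$, it equals
\begin{equation*}
E_{\theta}=\frac{(\textbf{e}_u-\textbf{e}_v)(\textbf{e}_u-\textbf{e}_v)^*}{\|\textbf{e}_u-\textbf{e}_v\|^2}.
\end{equation*}
Here $\textbf{e}_u-\textbf{e}_v$ is real, so $(\textbf{e}_u-\textbf{e}_v)^*=(\textbf{e}_u-\textbf{e}_v)^T$, and $\|\textbf{e}_u-\textbf{e}_v\|^2=2$ because $u\neq v$. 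Substituting gives $E_{\theta}=\frac{1}{2}(\textbf{e}_u-\textbf{e}_v)(\textbf{e}_u-\textbf{e}_v)^T$, as claimed.

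This argument is essentially mechanical once the reductions are in place, so I do not anticipate a genuine obstacle; the only point requiring care is the step that simplicity of $\theta$ forces $|\Omega|=1$, which is what lets us bypass the condition on $\textbf{w}\in\Omega\backslash\{\textbf{e}_u-\textbf{e}_v\}$ in Corollary \ref{strcospchar} entirely, together with the routine verification that the normalization constant is $\tfrac12$ rather than $1$.
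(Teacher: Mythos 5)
Your proposal is correct and follows exactly the route the paper intends: simplicity of $\theta$ forces $|\Omega|=1$ in Corollary \ref{strcospchar}, which gives strong cospectrality, and the formula for $E_{\theta}$ is the standard rank-one projection onto the span of $\textbf{e}_u-\textbf{e}_v$ with $\|\textbf{e}_u-\textbf{e}_v\|^2=2$. No gaps.
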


\begin{example}
\label{simple}
For $\omega\in\mathbb{R}$, let $X\cong P_3(\omega)$ be the unweighted $P_3$ with end vertices $u$ and $v$, and an added loop on the middle vertex with weight $\omega$. Since $u$ and $v$ are false twins in $X$, Lemma \ref{eu-ev} yields $\theta$ as a simple eigenvalue of $M$ with associated eigenvector $\textbf{e}_u-\textbf{e}_v$, where $\theta$ is given in \ref{eval}. By Corollary \ref{strcospcharsimp}, it follows that $u$ and $v$ are strongly cospectral with respect to $M$ for all $\omega\in\mathbb{R}$.
\end{example}

By taking a pair of non-adjacent vertices in the unweighted $C_4$, it is evident that the converse of Corollary \ref{strcospcharsimp} does not hold. Now, combining Theorem \ref{esupp+}(\ref{ye}) and Corollary \ref{strcospchar}, we obtain a characterization of strongly cospectral vertices which are also twins.

\begin{corollary}
\label{strcosptw}
Let $u$ and $v$ be strongly cospectral with respect to $M$. If $u$ and $v$ are twins, then $|\sigma_{uv}^-(M)|=1$. Conversely, if one of the following conditions hold then $u$ and $v$ are twins.
\begin{enumerate}
\item $|\sigma_{uv}^-(\textbf{A})|=1$ and either the loops on $u$ and $v$ have equal weights or $\beta\neq-\frac{\gamma}{2}$.
\item $|\sigma_{uv}^-(\mathcal{A})|=1$ and the loops on $u$ and $v$ have equal weights.
\end{enumerate}
\end{corollary}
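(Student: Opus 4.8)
The plan is to read off both directions as immediate consequences of the two cited results, so that essentially no fresh computation is needed; the work lies in matching hypotheses. For the forward implication I assume $u$ and $v$ are strongly cospectral twins, so that $T(\omega,\eta)=\{u,v\}$ is a set of twins and the hypotheses of Corollary \ref{strcospchar} hold. Its closing clause records that a strongly cospectral twin pair satisfies $\sigma_{uv}^-(M)=\{\theta\}$, with $\theta$ the eigenvalue in (\ref{eval}); in particular $|\sigma_{uv}^-(M)|=1$, which is exactly the assertion. (Equivalently, since $\textbf{e}_u-\textbf{e}_v$ is a $\theta$-eigenvector for twin $u,v$ by the last part of Lemma \ref{eu-ev}, one may feed this straight into Theorem \ref{esupp+}(\ref{ye}).)

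For the converse I retain the standing hypothesis that $u$ and $v$ are strongly cospectral and adjoin $|\sigma_{uv}^-(M)|=1$. The first step is to apply Theorem \ref{esupp+}(\ref{ye}): the condition $|\sigma_{uv}^-(M)|=1$ is equivalent to $\textbf{e}_u-\textbf{e}_v$ being an eigenvector of $M$, necessarily associated to $\theta$ in (\ref{eval}). The second step is to upgrade this eigenvector condition to the three defining conditions for twins, and here the extra hypotheses of the corollary enter at exactly the right points. For $M=\textbf{A}$ I invoke the last sentence of Theorem \ref{esupp+}(\ref{ye}), which says that a $\theta$-eigenvector $\textbf{e}_u-\textbf{e}_v$ forces $u,v$ to be twins precisely when the loops on $u$ and $v$ have equal weights or $\beta\neq-\frac{\gamma}{2}$; this is verbatim condition (1). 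For $M=\mathcal{A}$ the same sentence yields the twin conclusion precisely when the loops have equal \emph{nonzero} weights, which is condition (2), read with the loops present and hence nonzero.

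The step I expect to be delicate is not any calculation but the bookkeeping around the two exceptional regimes — exactly the regimes in which $\textbf{e}_u-\textbf{e}_v$ being an eigenvector is strictly weaker than $u,v$ being twins. For $M=\textbf{A}$ the subtlety lives at $\beta=-\frac{\gamma}{2}$: there the relation $(2\beta+\gamma)[(A)_{u,u}-(A)_{v,v}]=0$ from Lemma \ref{eu-ev}(\ref{eu-ev1}) no longer forces $(A)_{u,u}=(A)_{v,v}$, so the equal-loop-weight clause is what recovers the loop condition of twins; the graph $C_3(1,0,1,1)$ from the discussion after Lemma \ref{eu-ev} witnesses that the implication genuinely fails otherwise. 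For $M=\mathcal{A}$ the subtlety is that the neighbourhood identities $\frac{(A)_{j,u}}{\sqrt{\operatorname{deg}(u)}}=\frac{(A)_{j,v}}{\sqrt{\operatorname{deg}(v)}}$ from Lemma \ref{eu-ev}(\ref{eu-ev2}) collapse to the twin condition $(A)_{j,u}=(A)_{j,v}$ only once $\operatorname{deg}(u)=\operatorname{deg}(v)$, and by the remark following Lemma \ref{eu-ev} this degree equality is equivalent to $(A)_{u,u}=(A)_{v,v}\neq 0$; this is why the loop weights must be nonzero, with $C_3(0,-2,3,6)$ witnessing failure when the loops vanish. Once these two points are recorded, both cases close by direct citation of Theorem \ref{esupp+}(\ref{ye}).
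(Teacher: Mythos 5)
Your proof is correct and takes essentially the same route as the paper, which obtains this corollary precisely by combining the closing clause of Corollary \ref{strcospchar} (giving $\sigma_{uv}^-(M)=\{\theta\}$ for strongly cospectral twins) with the final sentence of Theorem \ref{esupp+}(\ref{ye}) for the converse. Your remarks on the exceptional regimes $\beta=-\frac{\gamma}{2}$ and vanishing loop weights correctly pinpoint why the extra hypotheses in conditions (1) and (2) are needed, and your reading of condition (2) as requiring equal \emph{nonzero} loop weights is the intended one.
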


The following result is a consequence of Theorem \ref{esupp+} and Corollary \ref{strcospchar}, which gives a lower bound for the sizes of the eigenvalue supports of strongly cospectral vertices given some spectral information.

\begin{corollary}
\label{nottw}
Let $m\geq 3$, $H$ be an $m\times m$ irreducible Hermitian matrix.
\begin{enumerate}
\item If $\textbf{e}_u+\textbf{e}_v$ and $\textbf{e}_u-\textbf{e}_v$ are not eigenvectors for $H$, then $\sigma_u(H)\geq 4$.
\item If either $\textbf{e}_u+\textbf{e}_v$ is an eigenvector for $H$ but $\textbf{e}_u-\textbf{e}_v$ is not, or $\textbf{e}_u-\textbf{e}_v$ is an eigenvector for $H$ but $\textbf{e}_u+\textbf{e}_v$ is not, then $\sigma_u(H)\geq 3$.
\end{enumerate}
\end{corollary}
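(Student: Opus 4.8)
The plan is to deduce everything from the additive decomposition of the eigenvalue support that Theorem~\ref{esupp+} supplies for strongly cospectral vertices, so I take $u$ and $v$ to be strongly cospectral with respect to $H$ (the standing hypothesis behind the statement, since the conclusion concerns eigenvalue supports of strongly cospectral vertices and the conditions on $\textbf{e}_u\pm\textbf{e}_v$ are exactly the eigenvector conditions of Theorem~\ref{esupp+}(\ref{yeh}) and~(\ref{ye})). The single structural input I need, already recorded in the proof of Theorem~\ref{esupp+}, is that $\sigma_u(H)$ is the \emph{disjoint} union of $\sigma_{uv}^+(H)$ and $\sigma_{uv}^-(H)$ and that each of these two sets is nonempty; hence $|\sigma_u(H)|=|\sigma_{uv}^+(H)|+|\sigma_{uv}^-(H)|$ with both summands at least $1$.

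With this decomposition fixed, the remainder is a short case analysis governed by the equivalences $|\sigma_{uv}^+(H)|=1\iff \textbf{e}_u+\textbf{e}_v\text{ is an eigenvector for }H$ and $|\sigma_{uv}^-(H)|=1\iff \textbf{e}_u-\textbf{e}_v\text{ is an eigenvector for }H$ from Theorem~\ref{esupp+}. For part~(1), I would argue that if neither $\textbf{e}_u+\textbf{e}_v$ nor $\textbf{e}_u-\textbf{e}_v$ is an eigenvector then $|\sigma_{uv}^+(H)|\neq 1$ and $|\sigma_{uv}^-(H)|\neq 1$; combined with nonemptiness this gives $|\sigma_{uv}^+(H)|\geq 2$ and $|\sigma_{uv}^-(H)|\geq 2$, whence $|\sigma_u(H)|\geq 4$. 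For part~(2), exactly one of the two vectors is an eigenvector; taking it to be $\textbf{e}_u+\textbf{e}_v$ gives $|\sigma_{uv}^+(H)|=1$ and $|\sigma_{uv}^-(H)|\geq 2$, so $|\sigma_u(H)|\geq 3$, and the other case is identical after interchanging the two sets.

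I do not anticipate a genuine obstacle, since the mathematical content is already contained in Theorem~\ref{esupp+} and what remains is bookkeeping. The only steps needing care are the passage from ``$\textbf{e}_u\pm\textbf{e}_v$ is not an eigenvector'' to $|\sigma_{uv}^{\pm}(H)|\geq 2$, which is legitimate only because each set is known in advance to be nonempty (so that ruling out size $1$ leaves size at least $2$), and the additivity $|\sigma_u(H)|=|\sigma_{uv}^+(H)|+|\sigma_{uv}^-(H)|$, which relies on the disjointness of the union. I would also note that Theorem~\ref{esupp+}(\ref{yeh}) and~(\ref{ye}) are stated for real symmetric $H$, so this is precisely the setting in which the eigenvector conditions and the $\sigma_{uv}^{\pm}$ splitting are available; the nonemptiness of both $\sigma_{uv}^+(H)$ and $\sigma_{uv}^-(H)$ is exactly what is established in the proof of Theorem~\ref{esupp+}.
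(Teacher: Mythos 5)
Your proposal is correct and is exactly the argument the paper intends: the paper offers no separate proof, presenting the corollary as an immediate consequence of Theorem~\ref{esupp+}, and your bookkeeping via $|\sigma_u(H)|=|\sigma_{uv}^+(H)|+|\sigma_{uv}^-(H)|$ with both summands nonempty, together with the two ``$=1$ iff eigenvector'' equivalences, is precisely that deduction. You also correctly flag the two points the paper leaves implicit --- that $u$ and $v$ must be assumed strongly cospectral, and that the $\sigma_{uv}^{\pm}$ splitting is only available in the real symmetric setting of Theorem~\ref{esupp+}(\ref{yeh})--(\ref{ye}).
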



\section{Graph products}\label{prod}

In this section, we determine when strong cospectrality is preserved under Cartesian and direct products of graphs. For two connected weighted graphs $X$ and $Y$ with possible loops, we denote their \textit{Cartesian product} by $X\square Y$, which is the graph with vertex set $V(X)\times V(Y)$ where $(u,x)$ and $(v,y)$ are adjacent in $X\square Y$ if either $x=y$ and $(u,v)$ is an edge in $X$ or $u=v$ and $(x,y)$ is an edge in $Y$. The weight of the edge between $(u,x)$ and $(v,y)$ is equal to the weight of $(u,v)$ if $x=y$ and $(x,y)$ if $u=v$. Moreover, if $u\in V(X)$ and $x\in V(Y)$ have loops of weight $\omega$ and $\omega'$ in $X$ and $Y$ respectively, then $(u,x)$ also has a loop of weight $\omega+\omega'$ in $X\square Y$. We also denote the \textit{direct product} of $X$ and $Y$ by $X\times Y$, which is the graph with vertex set $V(X)\times V(Y)$ where $(u,x)$ and $(v,y)$ are adjacent in $X\times Y$ if $(u,v)$ and $(x,y)$ are edges in $X$ and $Y$, respectively. The weight of the edge between $(u,x)$ and $(v,y)$ is equal to the product of the weights of the edges $(u,v)$ and $(x,y)$. If $x\neq y$ and $(x,y)$ is an edge in $Y$, then $(u,x)$ and $(u,y)$ are adjacent in $X\times Y$ if and only if $u$ has a loop in $X$, and $(u,x)$ has a loop in $X\times Y$ if and only if $u$ and $x$ have loops in $X$ and $Y$, respectively. For more about Cartesian and direct products of simple unweighted graphs, see \cite{ProdGraphs}. It is known that
\begin{equation}
\label{Cart}
D(X\square Y)=\left(D(X)\otimes I\right)+\left(I\otimes D(Y)\right)\quad \text{and}\quad A(X\square Y)=\left(A(X)\otimes I\right)+\left(I\otimes A(Y)\right)
\end{equation}
while one checks that
\begin{equation}
\label{weak}
D(X\times Y)=\left(D(X)\otimes D(Y)\right)-\left(D_1\otimes R_1\right)-\left(R_2\otimes D_2\right)\quad \text{and}\quad A(X\times Y)=A(X)\otimes A(Y),
\end{equation}
where $D_1$ and $D_2$ are the diagonal matrices consisting of the diagonal entries of $A(X)$ and $A(Y)$, while $R_1$ and $R_2$ are the diagonal matrices consisting of the row sums of $A(X)$ and $A(Y)$, respectively. If we add that $X$ and $Y$ are simple, then $D(X\times Y)=D(X)\otimes D(Y)$.

We now determine when strong cospectrality is preserved under Cartesian and direct products. We abuse notation and denote the orthogonal projection matrix corresponding to the eigenvalue $\lambda$ by $E_{\lambda}$.

\begin{theorem}
\label{closed}
Let $u$ and $v$ be strongly cospectral with respect to $M(X)$. For every eigenvalue $\mu$ of $\textbf{A}(X \square Y)$ (resp., $\mathcal{A}(X\times Y)$), let $\Lambda_{\mu}$ be the set of all $\lambda\in\sigma_u(\textbf{A}(X))$ (resp., $\mathcal{A}(X)$) and $\Theta_{\mu}$ be the set of all $\theta\in\sigma_w(\textbf{A}(Y))$ (resp., $\mathcal{A}(X)$) such that $\mu+\alpha=\lambda+\theta$ (resp., $\mu=\lambda\theta$).
\begin{enumerate}
\item Let $M(X)=\textbf{A}(X)$. For any vertex $w$ in $Y$, $(u,w)$ and $(v,w)$ are strongly cospectral with respect to $\textbf{A}(X\square Y)$ if and only if for any eigenvalue $\mu$ of $\textbf{A}(X\square Y)$, one of the following holds.
\begin{enumerate}
\item $\mu+\alpha=\lambda+\theta\neq \lambda'+\theta'$ for any $\lambda'\in \sigma(\textbf{A}(X))$ and $\theta'\in \sigma(\textbf{A}(Y))$ such that $\lambda\neq \lambda'$ and $\theta\neq \theta'$.
\item Either $E_{\lambda}\textbf{e}_u=E_{\lambda}\textbf{e}_v$ for all $\lambda\in\Lambda_{\mu}$ or $E_{\lambda}\textbf{e}_u=-E_{\lambda}\textbf{e}_v$ for all $\lambda\in\Lambda_{\mu}$.
\end{enumerate}
\item Let $M(X)=\mathcal{A}(X)$ and $X$ and $Y$ be simple. For any vertex $w$ in $Y$, $(u,w)$ and $(v,w)$ are strongly cospectral with respect to $\mathcal{A}(X\times Y)$ if and only if for any eigenvalue $\mu$ of $\mathcal{A}(X\times Y)$, either
\begin{enumerate}
\item $\frac{1}{\gamma}\mu-\alpha\gamma=(\lambda-\alpha)(\theta-\alpha)\neq (\lambda'-\alpha)(\theta'-\alpha)$ for any $\lambda'\in \sigma(\textbf{A}(X))$ and $\theta'\in \sigma(\textbf{A}(Y))$ such that $\lambda\neq \lambda'$ and $\theta\neq \theta'$, or
\item either $E_{\lambda}\textbf{e}_u=E_{\lambda}\textbf{e}_v$ for all $\lambda\in\Lambda_{\mu}$ or $E_{\lambda}\textbf{e}_u=-E_{\lambda}\textbf{e}_v$ for all $\lambda\in\Lambda_{\mu}$.
\end{enumerate}
\item If we add that $w$ and $z$ are strongly cospectral with respect to $M(Y)$, then $(u,w)$ and $(v,z)$ are strongly cospectral with respect to $\textbf{A}(X\square Y)$ (resp., $\mathcal{A}(X\times Y)$) if and only if for every eigenvalue $\mu$ of $\textbf{A}(X\square Y)$ (resp., $\mathcal{A}(X\times Y)$), either 
\begin{enumerate}
\item $E_{\lambda}\textbf{e}_u\otimes E_{\theta}\textbf{e}_w=E_{\lambda}\textbf{e}_v\otimes E_{\lambda}\textbf{e}_z$ for all $\lambda\in\Lambda_{\mu}$ and $\theta\in\Theta_{\mu}$, or
\item $E_{\lambda}\textbf{e}_u\otimes E_{\theta}\textbf{e}_w=-E_{\lambda}\textbf{e}_v\otimes E_{\lambda}\textbf{e}_z$ for all $\lambda\in\Lambda_{\mu}$ and $\theta\in\Theta_{\mu}$.
\end{enumerate}
\end{enumerate}
\end{theorem}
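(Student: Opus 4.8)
The plan is to reduce everything to the tensor-product structure of the two matrices and then exploit the mutual orthogonality of tensor products of eigenprojections. First I would put each product matrix in separable form. Combining (\ref{Cart}) with the definition (\ref{gam}) gives, after a short computation,
\begin{equation*}
\textbf{A}(X\square Y)=\textbf{A}(X)\otimes I+I\otimes\textbf{A}(Y)-\alpha(I\otimes I),
\end{equation*}
while (\ref{weak}), (\ref{gnam}) and the simplicity of $X$ and $Y$ (so that $D(X\times Y)^{-\frac12}=D(X)^{-\frac12}\otimes D(Y)^{-\frac12}$) yield
\begin{equation*}
\mathcal{A}(X\times Y)=\alpha I+\tfrac{1}{\gamma}\left(\mathcal{A}(X)-\alpha I\right)\otimes\left(\mathcal{A}(Y)-\alpha I\right).
\end{equation*}
Writing the eigenprojections of the $X$-matrix as $E_\lambda$ and those of the $Y$-matrix as $F_\theta$, each product matrix then has a spectral decomposition $\sum_\mu\mu\,G_\mu$ with $G_\mu=\sum_{\lambda\in\Lambda_\mu,\ \theta\in\Theta_\mu}E_\lambda\otimes F_\theta$, the inner sum running exactly over the pairs $(\lambda,\theta)$ that realize $\mu$ through the additive (Cartesian) or multiplicative (direct) eigenvalue relation recorded in the statement. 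The structural fact I would isolate is that the $E_\lambda\otimes F_\theta$ are pairwise multiplicatively orthogonal Hermitian idempotents, so the blocks $\operatorname{col}(E_\lambda\otimes F_\theta)$ are mutually orthogonal subspaces.

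Next I would translate strong cospectrality into these blocks. Since $\textbf{e}_{(u,w)}=\textbf{e}_u\otimes\textbf{e}_w$, applying $G_\mu$ gives
\begin{equation*}
G_\mu\,\textbf{e}_{(u,w)}=\sum_{\lambda\in\Lambda_\mu,\ \theta\in\Theta_\mu}\left(E_\lambda\textbf{e}_u\right)\otimes\left(F_\theta\textbf{e}_w\right),
\end{equation*}
and likewise for $(v,z)$. As all matrices involved are real symmetric, $(u,w)$ and $(v,z)$ are strongly cospectral in the product precisely when, for every eigenvalue $\mu$, $G_\mu\textbf{e}_{(u,w)}=\epsilon_\mu\,G_\mu\textbf{e}_{(v,z)}$ for some $\epsilon_\mu\in\{+1,-1\}$. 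Because distinct pairs $(\lambda,\theta)$ feed orthogonal blocks, I would project this identity onto each $\operatorname{col}(E_\lambda\otimes F_\theta)$; orthogonality forbids cancellation between blocks, so the block identity holds if and only if it holds term by term with the one common sign $\epsilon_\mu$. This is exactly alternatives (a) and (b) of statement 3, namely $E_\lambda\textbf{e}_u\otimes F_\theta\textbf{e}_w=\pm\,E_\lambda\textbf{e}_v\otimes F_\theta\textbf{e}_z$ for all admissible $(\lambda,\theta)$ with a sign that is uniform over the block.

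For statements 1 and 2, where only $u,v$ are assumed strongly cospectral in $X$ and $w=z$ is an arbitrary common vertex of $Y$, the same reduction applies, but now $F_\theta\textbf{e}_w$ is a fixed factor appearing on both sides. Here I would split each $\mu$ into two regimes. If $\mu$ arises from a single pair $(\lambda,\theta)$ (the ``no-collision'' case, condition (a)), the lone block identity reads $(E_\lambda\textbf{e}_u)\otimes(F_\theta\textbf{e}_w)=\epsilon_\mu(E_\lambda\textbf{e}_v)\otimes(F_\theta\textbf{e}_w)$, which is automatic from strong cospectrality in $X$, with $\epsilon_\mu$ inherited from the sign in $E_\lambda\textbf{e}_u=\pm E_\lambda\textbf{e}_v$. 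If several pairs collide at $\mu$, the common factor $F_\theta\textbf{e}_w$ can be stripped only after the $X$-sign is pulled out, so a uniform $\epsilon_\mu$ exists if and only if $E_\lambda\textbf{e}_u=\pm E_\lambda\textbf{e}_v$ with the \emph{same} sign across all $\lambda\in\Lambda_\mu$, which is condition (b). The direct-product case is identical once the additive eigenvalue relation is replaced by the multiplicative one and one uses that $\mathcal{A}(X)$ and $\tfrac{1}{\gamma}(\mathcal{A}(X)-\alpha I)$ share eigenprojections.

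The main obstacle, and the only place genuine care is needed, is the collision bookkeeping: the map $(\lambda,\theta)\mapsto\mu$ is many-to-one, so a single eigenspace of the product is a direct sum of several tensor blocks, and a globally consistent sign $\epsilon_\mu$ must be compatible with all per-block signs simultaneously. The orthogonality of the projections $E_\lambda\otimes F_\theta$ is exactly what rules out accidental cancellations that might otherwise let mismatched block signs still sum to $\pm G_\mu\textbf{e}_{(v,z)}$; establishing that orthogonality and the resulting block-diagonalization of the strong-cospectrality condition is the crux, after which alternatives (a) and (b) fall out mechanically.
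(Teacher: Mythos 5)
Your proposal is correct and follows essentially the same route as the paper: the same separable forms $\textbf{A}(X\square Y)+\alpha I=\textbf{A}(X)\otimes I+I\otimes\textbf{A}(Y)$ and $\gamma\mathcal{A}(X\times Y)-\alpha\gamma I=(\mathcal{A}(X)-\alpha I)\otimes(\mathcal{A}(Y)-\alpha I)$, the same expression of each product eigenprojection as a sum of tensor blocks $E_\lambda\otimes E_\theta$ over colliding pairs, and the same one-summand versus several-summands case split. Your explicit appeal to the mutual orthogonality of the tensor blocks to rule out cross-block cancellation is a slightly more careful justification of the step the paper marks as $(\star)$, but it is a refinement of the same argument rather than a different one.
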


\begin{proof}
Suppose $u$ and $v$ are strongly cospectral with respect to $\textbf{A}(X)$, and let $w$ be any vertex in $Y$. Note that the characteristic vector of $(u,w)$ is $\textbf{e}_u\otimes \textbf{e}_w$. We first prove 1. By (\ref{Cart}), we have
\begin{equation}
\label{cart}
\begin{split}
\textbf{A}(X\square Y)+\alpha I &=2\alpha I+\beta\left(D(X)\otimes I\right)+\beta\left(I\otimes D(Y)\right)+\gamma \left(A(X)\otimes I\right)+\gamma \left(I\otimes A(Y)\right)\\
&=\left((\alpha I+\beta D(X)+\gamma A(X))\otimes I\right)+\left(I\otimes (\alpha I+\beta D(Y)+\gamma A(Y))\right)\\
&=\left(\textbf{A}(X)\otimes I\right)+\left(I\otimes\textbf{A}(Y)\right).
\end{split}
\end{equation}
Let $\lambda$ and $\theta$ are eigenvalues of $\textbf{A}(X)$ and $\textbf{A}(Y)$ with associated eigenvectors $\textbf{v}$ and $\textbf{w}$. Then (\ref{cart}) yields
\begin{equation}
\label{alq}
\begin{split}
(\textbf{A}(X\square Y)+\alpha I)(\textbf{v}\otimes \textbf{w})&=(\textbf{A}(X)\otimes I)\textbf{v}+(I\otimes\textbf{A}(Y))\textbf{w}=(\lambda+\theta)(\textbf{v}\otimes \textbf{w}).
\end{split}
\end{equation}
For any eigenvalue $\mu$ of $\textbf{A}(X\square Y)$, one checks that
\begin{equation}
\label{koda}
E_{\mu}=\sum_{\substack{\lambda\in \textbf{A}(X),\ \theta\in \textbf{A}(Y)\\ \lambda+\theta=\mu+\alpha}}(E_{\lambda}\otimes E_{\theta}).
\end{equation}
We proceed with two cases, when $E_{\mu}$ in (\ref{koda}) consists of only one or at least two summands. Consider the sets $\Lambda_{\mu}$ and $\Theta_{\mu}$ defined in the premise. The first case is equivalent to the fact that $\mu+\alpha=\lambda+\theta\neq \lambda'+\theta'$ for any $\lambda'\in \sigma(\textbf{A}(X))$ and $\theta'\in \sigma(\textbf{A}(Y))$ such that $\lambda\neq \lambda'$ or $\theta\neq \theta'$, and it follows that
\begin{equation*}
\begin{split}
E_{\mu}(\textbf{e}_u\otimes \textbf{e}_w)=E_{\lambda}\textbf{e}_u\otimes E_{\theta}\textbf{e}_w=(\pm E_{\lambda}\textbf{e}_v)\otimes E_{\theta}\textbf{e}_w=\pm (E_{\lambda}\textbf{e}_v\otimes E_{\theta}\textbf{e}_w)=\pm E_{\mu}(\textbf{e}_v\otimes \textbf{e}_w),
\end{split}
\end{equation*}
On the other hand, if $E_{\mu}$ consists of at least two summands, then
\begin{equation}
\label{koda1}
\begin{split}
E_{\mu}(\textbf{e}_u\otimes \textbf{e}_w)&=\sum_{\lambda\in\Lambda_{\mu},\ \theta\in\Theta_{\mu}}(E_{\lambda}\textbf{e}_u\otimes E_{\theta}\textbf{e}_w)\stackrel{(\star)}{=}\pm \sum_{\lambda\in\Lambda_{\mu},\ \theta\in\Theta_{\mu}}( E_{\lambda}\textbf{e}_v\otimes  E_{\theta}\textbf{e}_w)=\pm E_{\mu}(\textbf{e}_v\otimes \textbf{e}_w),
\end{split}
\end{equation}
where $(\star)$ holds if and only if $E_{\lambda}\textbf{e}_u=E_{\lambda}\textbf{e}_v$ for all $\lambda\in\Lambda_{\mu}$ or $E_{\lambda}\textbf{e}_u=-E_{\lambda}\textbf{e}_v$ for all $\lambda\in\Lambda_{\mu}$. This proves 1. Next, suppose $X$ and $Y$ are simple. Since (\ref{weak}) holds, we have
\begin{equation*}
\begin{split}
\gamma D(X\times Y)^{-\frac{1}{2}}A(X\times Y)D(X\times Y)^{-\frac{1}{2}}&=\gamma(D(X)^{-\frac{1}{2}}\otimes D(Y)^{-\frac{1}{2}})(A(X)\otimes A(Y))(D(X)^{-\frac{1}{2}}\otimes D(Y)^{-\frac{1}{2}})\\
&=\gamma\left(D(X)^{-\frac{1}{2}}A(X)D(X)^{-\frac{1}{2}}\right)\otimes \left(D(Y)^{-\frac{1}{2}}A(Y)D(Y)^{-\frac{1}{2}}\right)\\
&=\frac{1}{\gamma}\left((\mathcal{A}(X)-\alpha I)\otimes (\mathcal{A}(Y)-\alpha I)\right).
\end{split}
\end{equation*}
Consequently,
\begin{equation}
\label{bnb}
\gamma\mathcal{A}(X\times Y)-\alpha \gamma I=(\mathcal{A}(X)-\alpha I)\otimes(\mathcal{A}(Y)-\alpha I)
\end{equation}
If $\lambda$ and $\theta$ are eigenvalues of $\mathcal{A}(X)$ and $\mathcal{A}(Y)$ with corresponding eigenvectors $\textbf{v}$ and $\textbf{w}$, then (\ref{bnb}) yields
\begin{equation}
\label{nl}
\begin{split}
(\gamma\mathcal{A}(X\times Y)-\alpha\gamma I)(\textbf{v}\otimes \textbf{w})&=(\mathcal{A}(X)-\alpha I)\textbf{v}\otimes(\mathcal{A}(Y)-\alpha I)\textbf{w}=(\lambda-\alpha)(\theta-\alpha)(\textbf{v}\otimes \textbf{w}).
\end{split}
\end{equation}
Applying the same argument from the proof of 1 establishes 2. Finally, if $w$ and $z$ are strongly cospectral with respect to $M(Y)$, then a calculation similar to (\ref{koda1}) implies that $(u,w)$ and $(v,z)$ are strongly cospectral with respect to $\textbf{A}(X\square Y)$ (resp., $\mathcal{A}(X\times Y)$) if and only if for every eigenvalue $\mu$ of $\textbf{A}(X\square Y)$ (resp., $\mathcal{A}(X\times Y))$, either (i) $E_{\lambda}\textbf{e}_u\otimes E_{\theta}\textbf{e}_w=E_{\lambda}\textbf{e}_v\otimes E_{\lambda}\textbf{e}_z$ for all $\lambda\in\Lambda_{\mu}$ and $\theta\in\Theta_{\mu}$, in which case $E_{\mu}(\textbf{e}_u\otimes\textbf{e}_w)=E_{\mu}(\textbf{e}_v\otimes\textbf{e}_z)$, or (ii) $E_{\lambda}\textbf{e}_u\otimes E_{\theta}\textbf{e}_w=-E_{\lambda}\textbf{e}_v\otimes E_{\lambda}\textbf{e}_z$ for all $\lambda\in\Lambda_{\mu}$ and $\theta\in\Theta_{\mu}$, in which case $E_{\mu}(\textbf{e}_u\otimes\textbf{e}_w)=-E_{\mu}(\textbf{e}_v\otimes\textbf{e}_z)$.
\end{proof}

For $M=A,L,Q$, (\ref{Cart}) implies that $M(X\square Y)=M(X)\otimes I+I\otimes M(Y)$, and (\ref{alq}) implies the eigenvalues of $M(X\square Y)$ are $\lambda+\theta$, where $\lambda$ and $\theta$ are the eigenvalues of $M(X)$ and $M(Y)$, respectively. For $\mathcal{A}=\mathcal{L}$, (\ref{bnb}) implies that $\mathcal{L}(X\times Y)=\mathcal{L}(X)\otimes I+I\otimes \mathcal{L}(Y)-(\mathcal{L}(X)\otimes \mathcal{L}(Y))$ and (\ref{nl}) implies that the eigenvalues of $\mathcal{L}(X\times Y)$ are $\lambda+\theta-\lambda\theta$, where $\lambda$ and $\theta$ are the eigenvalues of $\mathcal{L}(X)$ and $\mathcal{L}(Y)$, respectively.

We also observe that $u$ and $v$ are strongly cospectral vertices with respect to $M(X)$, and every eigenvalue $\mu$ of $\textbf{A}(X \square Y)$ (resp., $\mathcal{A}(X\times Y)$) is simple, then it follows from Theorem \ref{closed}(1a) (resp., (1b)) that $(u,w)$ and $(y,w)$ are strongly cospectral with respect to $\textbf{A}(X \square Y)$ (resp., $\mathcal{A}(X\times Y)$). In particular, one checks that the eigenvalues of  $\textbf{A}(X \square Y)$ (resp., $\mathcal{A}(X\times Y)$) are simple if and only if the eigenvalues of $\textbf{A}(X)$ and $\textbf{A}(Y)$ are all simple and the sums $\lambda+\theta$ (resp., $\lambda\theta-\alpha(\lambda+\theta)$) are unique for each $\lambda\in \textbf{A}(X)$ and $\theta\in \textbf{A}(Y)$. 

To illustrate Theorem \ref{closed}, we give the following examples.

\begin{example}
Let $x$ and $y$ be the vertices of $K_2$, and $u$ and $v$ be the antipodal vertices of $P_3$. One checks that $x$ and $y$ are strongly cospectral with respect to $\textbf{A}(K_2)$, and $u$ and $v$ are strongly cospectral with respect to $\textbf{A}(P_3)$. The eigenvalues of $\textbf{A}(K_2)$ are $\lambda_1=\alpha+\beta+\gamma$ and $\lambda_2=\alpha+\beta-\gamma$, while the eigenvalues of $\textbf{A}(P_3)$ are $\theta_1=\alpha+\beta$, $\theta_2=\alpha+\frac{1}{2}(3\beta+\sqrt{\beta^2+8\gamma^2})$ and $\theta_3=\alpha+\frac{1}{2}(3\beta-\sqrt{\beta^2+8\gamma^2})$. We have two cases.
\begin{itemize}
\item Let $\beta\neq \pm 8\gamma$. Then the eigenvalues of $\textbf{A}(K_2\square P_3)$ are $\theta_i+\lambda_j-\alpha$ for $i\in\{1,2\}$ and $j\in\{1,2,3\}$. Invoking Theorem \ref{closed}(1a) yields four pairwise strongly cospectral vertices $(x,u)$, $(y,u)$, $(x,v)$ and $(y,v)$ with respect to $\textbf{A}(K_2\square P_3)$ (see Figure \ref{KP}).
\item Let $\beta=8\gamma$. The eigenvalues of $\textbf{A}(K_2\square P_3)$ are $\lambda_1+\theta_1-\alpha$ (multiplicity two), $\lambda_1+\theta_2-\alpha$, $\lambda_1+\theta_3-\alpha$, $\lambda_2+\theta_1-\alpha$, and  $\lambda_2+\theta-\alpha$, where $\theta=\theta_3$ whenever $\gamma<0$ and $\theta=\theta_2-\alpha$ whenever $\gamma>0$. One checks that $E_{\lambda_1+\theta_1-\alpha}=(E_{\lambda_1}\otimes E_{\theta_1})+(E_{\lambda_2}\otimes E_{\theta})$. Since $E_{\lambda_1}\textbf{e}_x=E_{\lambda_1}\textbf{e}_y$ and $E_{\lambda_2}\textbf{e}_x=-E_{\lambda_2}\textbf{e}_y$, we get $E_{\lambda_1+\theta_1-\alpha}(\textbf{e}_x\otimes \textbf{e}_u) \neq \pm E_{\lambda_1+\theta_1-\alpha}(\textbf{e}_x\otimes \textbf{e}_u)$. Moreover, since $E_{\theta} \textbf{e}_u=E_{\theta} \textbf{e}_v$ and $E_{\theta_1} \textbf{e}_u=-E_{\theta_1} \textbf{e}_v$, we also get $E_{\lambda_1+\theta_1-\alpha}(\textbf{e}_x\otimes \textbf{e}_u) \neq \pm E_{\lambda_1+\theta_1-\alpha}(\textbf{e}_x\otimes \textbf{e}_v)$. Thus, none of $(x,u)$, $(y,u)$, $(x,v)$ and $(y,v)$ are pairwise strongly cospectral with respect to $\textbf{A}(K_2\square P_3)$. The same holds whenever $\beta=-8\gamma$.
\end{itemize}
\end{example}

\begin{figure}[h!]
	\begin{center}
		\begin{tikzpicture}
		\tikzset{enclosed/.style={draw, circle, inner sep=0pt, minimum size=.25cm}}
	   
		\node[enclosed,fill=cyan,label={above, yshift=0cm: $(x,u)$}] (v_1) at (0,1.5) {};
		\node[enclosed] (v_2) at (1.5,1.5) {};
		\node[enclosed,fill=cyan,label={above, yshift=0cm: $(x,v)$}] (v_3) at (3,1.5) {};
		\node[enclosed,fill=cyan,label={below, yshift=0cm: $(y,u)$}] (v_4) at (0,0) {};
		\node[enclosed] (v_5) at (1.5,0) {};
		\node[enclosed,fill=cyan, label={below, yshift=0cm: $(y,v)$}] (v_6) at (3,0) {};
		
		\draw (v_1) -- (v_2);
		\draw (v_1) -- (v_4);
 		\draw (v_2) -- (v_3);
 		\draw (v_2) -- (v_5);
 		\draw (v_4) -- (v_5);
 		\draw (v_3) -- (v_6);
 		\draw (v_5) -- (v_6);
 		
 		\node[enclosed,fill=cyan,label={above, yshift=0cm: $(x,u)$}] (v_1) at (6,1.5) {};
		\node[enclosed,label={above, yshift=0cm: $(x,w)$}] (v_2) at (7.5,1.5) {};
		\node[enclosed,fill=magenta,label={above, yshift=0cm: $(x,w)$}] (v_3) at (9,1.5) {};
		\node[enclosed,fill=cyan,label={below, yshift=0cm: $(y,u)$}] (v_4) at (6,0) {};
		\node[enclosed, label={below, yshift=0cm: $(y,v)$}] (v_5) at (7.5,0) {};
		\node[enclosed,fill=magenta, label={below, yshift=0cm: $(y,w)$}] (v_6) at (9,0) {};
		
		\draw (v_1) -- (v_5);
		\draw (v_2) -- (v_4);
 		\draw (v_5) -- (v_3);
 		\draw (v_2) -- (v_6);
 		\draw (v_1) -- (v_6);
 		\draw (v_3) -- (v_4);
 		
		\end{tikzpicture}
	\end{center}
	\caption{$K_2\square P_3$ with four pairwise strongly cospectral vertices with respect to $\textbf{A}$ whenever $\beta\neq \pm 8\gamma$ marked blue (left) and $K_2\times K_3$ with pairs of strongly cospectral vertices with respect to $\mathcal{A}$ marked blue, white, and pink (right)}\label{KP}
\end{figure}
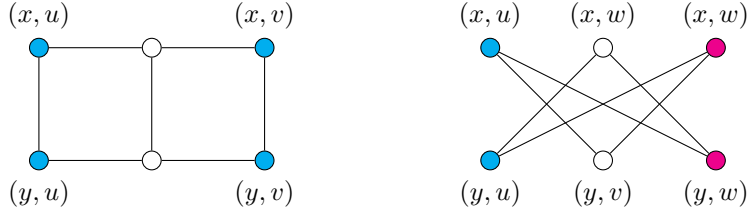

\begin{example}
Consider $K_2\times K_3$ with $V(K_2)=\{x,y\}$ and $V(K_3)=\{u,v,w\}$. As $\mathcal{A}(K_2)=\alpha I+\gamma A(K_2)$ and $\mathcal{A}(K_3)=\alpha I+\frac{1}{2}\gamma A(K_3)$, $x$ and $y$ are strongly cospectral with respect to $\mathcal{A}(K_2)$, and the eigenvalues of $\mathcal{A}(K_2)$ and $\mathcal{A}(K_3)$ are $\alpha\pm \gamma$ and $\alpha+\gamma,\alpha-\frac{1}{2}\gamma$ (multiplicity two), respectively. From (\ref{nl}), the eigenvalues of $\mathcal{A}(K_2\times K_3)$ are $\pm \gamma^2$ and $\pm \frac{1}{2}\gamma^2$ (with multiplicity two). Applying Theorem \ref{closed}(2a), we get that $(x,z)$ and $(y,z)$ are strongly cospectral with respect to $\mathcal{A}(K_2\times K_3)$ for any $z\in V(K_3)$ (see Figure \ref{KP}). 
\end{example}


\section{Equitable and almost equitable partitions}\label{eqtp}

Let $\pi=(C_1,\ldots,C_n)$ be a partition of $V(X)$. The \textit{normalized characteristic matrix} $\mathcal{P}$ of $\pi$ is the $|V(X)|\times n$ matrix such that $(\mathcal{P})_{j,\ell}=1/\sqrt{|C_{\ell}|}$ if $j\in C_{\ell}$ and $(\mathcal{P})_{j,\ell}=0$ otherwise. It is known that $\mathcal{P}^T\mathcal{P}=I_{|V(X)|}$ and $\mathcal{P}\mathcal{P}^T$ is a $|V(X)|\times |V(X)|$ block diagonal matrix consisting of $|C_j|\times |C_j|$ diagonal blocks of the form $\frac{1}{|C_j|}J$. We say that $\pi$ is \textit{equitable} if for every $j,\ell\in \{1,\ldots,n\}$, the sum of the weights of the edges $(u,v)$ for a fixed $u\in C_j$ and for $v\in C_{\ell}$ is a constant $d_{j,\ell}$. Consequently, $|C_j|d_{j,\ell}=|C_{\ell}|d_{\ell,j}$ for every $j,\ell\in \{1,\ldots,n\}$, and in the subgraph induced by each $C_j$, the sum of the weights of all edges incident to each vertex in $C_j$ is equal. For simple graphs, the latter implies that the subgraph induced by $C_j$ is a weighted $d_{j,j}$-regular graph. If we only require that for every $j,\ell\in \{1,\ldots,n\}$ with $j\neq \ell$, the sum of the weights of the edges $(u,v)$ for a fixed $u\in C_j$ and for $v\in C_{\ell}$ is a constant $d_{j,\ell}$, then we say that $\pi$ is an \textit{almost equitable partition} of $V(X)$. An equitable partition is almost equitable, but the converse is not true. Moreover, every graph admits an equitable and almost equitable partition. The partition of $V(X)$ whose cells consist of a single vertex is both almost equitable and equitable, while the partition with a single cell consisting all the vertices in $V(X)$ is almost equitable but not equitable, unless $X$ is weighted regular. These two are called \textit{trivial} equitable and almost equitable partitions. An (almost) equitable partition $\pi$ gives rise to a \textit{quotient graph} $X/\pi$ whose vertices are the cells of $\pi$ and the weight of the edge joining $C_j$ and $C_{\ell}$ is given by $\sqrt{d_{j,\ell}{d_{\ell,j}}}$. For the basics of equitable and almost equitable partitions, see \cite{CARDOSO,Godsil:AlgebraicGraph}. Following the treatment in \cite{Alvir2016,CARDOSO,Godsil2012a}, we provide a characterization of equitable partitions and almost equitable partitions in weighted graphs with possible loops.

\begin{theorem}
\label{ep}
Let $\pi$ be a partition of $V(X)$. If $\beta\neq -\gamma$, then the following statements are equivalent.
\begin{enumerate}
\item $\pi$ is equitable with $n$ cells.
\item  \label{lap2}The column space of $\mathcal{P}$ is $\textbf{A}(X)$-invariant.
\item \label{lap} There is a $n\times n$ matrix $B$ such that $\textbf{A}(X)\mathcal{P}=\mathcal{P}B$.
\item  \label{lap3} $\textbf{A}(X)$ and $\mathcal{P}\mathcal{P}^T$ commute.
\end{enumerate}
On the other hand, if $\beta=-\gamma$, then $\pi$ is almost equitable if and only if  statements \ref{lap2}, \ref{lap} or \ref{lap3} hold. In addition, if either $\pi$ is equitable or $\pi$ is almost equitable and $\beta=-\gamma$, then $B=\mathcal{M}(X/\pi)$, where
\begin{equation}
\label{eq}
(\mathcal{M}(X/\pi))_{j,\ell}= 
\begin{cases}
    \gamma\sqrt{d_{j,\ell}d_{\ell,j}}, & \text{if}\ j\neq \ell\\
    \alpha+(\beta+\gamma)d_{j,j}+\beta\displaystyle\left(\sum_{r\neq j} d_{j,r}+\frac{1}{|C_j|}\sum_{u\in C_j}(A(X))_{u,u}\right), & \text{if}\ j=\ell.
\end{cases}
\end{equation}
\end{theorem}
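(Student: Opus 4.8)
The plan is to route the entire four-way equivalence through condition (\ref{lap}), $\textbf{A}(X)\mathcal{P}=\mathcal{P}B$, and to read off $B$ at the end. First I would settle the purely linear-algebraic equivalences $(\ref{lap2})\Leftrightarrow(\ref{lap})\Leftrightarrow(\ref{lap3})$, which hold for \emph{any} Hermitian matrix and ignore the combinatorics of $\pi$. Writing $\mathcal{U}$ for the column space of $\mathcal{P}$ and $E:=\mathcal{P}\mathcal{P}^T$ for the orthogonal projection onto $\mathcal{U}$ (a projection exactly because $\mathcal{P}^T\mathcal{P}=I$), the implication $(\ref{lap})\Rightarrow(\ref{lap2})$ is immediate since the columns of $\mathcal{P}B$ lie in $\mathcal{U}$; conversely, if $\mathcal{U}$ is $\textbf{A}(X)$-invariant then $B:=\mathcal{P}^T\textbf{A}(X)\mathcal{P}$ works, because $\mathcal{P}B=E\,\textbf{A}(X)\mathcal{P}=\textbf{A}(X)\mathcal{P}$ once $E$ fixes the columns of $\textbf{A}(X)\mathcal{P}$, which already lie in $\mathcal{U}$. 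For $(\ref{lap2})\Leftrightarrow(\ref{lap3})$ I would use that $\textbf{A}(X)$ is Hermitian: invariance gives $\textbf{A}(X)E=E\textbf{A}(X)E$, and taking adjoints yields $E\textbf{A}(X)=E\textbf{A}(X)E$, so the two products agree; the reverse direction is the one-line computation $\textbf{A}(X)v=\textbf{A}(X)Ev=E\textbf{A}(X)v\in\mathcal{U}$ for $v\in\mathcal{U}$. This step also shows that $B=\mathcal{P}^T\textbf{A}(X)\mathcal{P}$ is the only possible quotient matrix, which is what I will compute last.

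The heart of the argument is the equivalence of statement (1) with (\ref{lap}), and this is where the dichotomy between $\beta\neq-\gamma$ and $\beta=-\gamma$ surfaces. I would test invariance on the vectors of $\mathcal{U}$, i.e.\ on functions $f$ that are constant (value $f_r'$) on each cell $C_r$. For $u\in C_j$ one computes $(\textbf{A}(X)f)_u=\alpha f_j'+\beta\operatorname{deg}(u)f_j'+\gamma\sum_r f_r'\sum_{v\in C_r}(A)_{u,v}$, and condition (\ref{lap}) is precisely the requirement that this depend on $u$ only through its cell $C_j$. The between-cell sums $\sum_{v\in C_r}(A)_{u,v}$ ($r\neq j$) being cell-constant is exactly the almost equitable condition, furnishing the constants $d_{j,r}$. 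Writing $s_u=\sum_{v\in C_j}(A)_{u,v}$ for the within-cell sum (loop included) and using $\operatorname{deg}(u)=(A)_{u,u}+s_u+\sum_{r\neq j}d_{j,r}$, the part of $(\textbf{A}(X)f)_u$ carrying $f_j'$ reduces to $f_j'\big[\beta(A)_{u,u}+(\beta+\gamma)s_u+\beta\sum_{r\neq j}d_{j,r}\big]$. The decisive point is the coefficient $\beta+\gamma$ on $s_u$: when $\beta\neq-\gamma$ this term forces $s_u$ to be cell-constant, i.e.\ within-cell regularity, giving the full equitable condition; when $\beta=-\gamma$ it disappears, so no within-cell condition is needed and almost equitability suffices. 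This is the mechanism behind the two regimes.

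Finally, with $B=\mathcal{P}^T\textbf{A}(X)\mathcal{P}$ fixed, I would verify (\ref{eq}) entrywise. Off the diagonal the terms from $\alpha I$ and $\beta D$ vanish, and
\[
(\mathcal{P}^T A\mathcal{P})_{j,\ell}=\frac{1}{\sqrt{|C_j||C_\ell|}}\sum_{u\in C_j,\,v\in C_\ell}(A)_{u,v}=\sqrt{\tfrac{|C_j|}{|C_\ell|}}\,d_{j,\ell}=\sqrt{d_{j,\ell}d_{\ell,j}},
\]
where the last equality uses the consistency relation $|C_j|d_{j,\ell}=|C_\ell|d_{\ell,j}$; multiplying by $\gamma$ gives the stated off-diagonal entry. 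On the diagonal I would add $\alpha$, $\frac{\beta}{|C_j|}\sum_{u\in C_j}\operatorname{deg}(u)$, and $\gamma(\mathcal{P}^T A\mathcal{P})_{j,j}=\gamma d_{j,j}$, then reorganize using $\operatorname{deg}(u)=(A)_{u,u}+d_{j,j}+\sum_{r\neq j}d_{j,r}$ to collect the coefficient $(\beta+\gamma)$ on $d_{j,j}$, the coefficient $\beta$ on $\sum_{r\neq j}d_{j,r}$, and the averaged loop term $\frac{\beta}{|C_j|}\sum_{u\in C_j}(A)_{u,u}$.

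I expect the diagonal computation to be the main obstacle, since it is sensitive to the loop conventions: the factor $2$ in $\operatorname{deg}(u)=2\omega_{u,u}+\sum_{j\neq u}\omega_{u,j}$ must be reconciled with whether the within-cell constant $d_{j,j}$ is taken to include the loop weight $(A)_{u,u}$. Keeping the loop contribution straight between the $D$-block and the $A$-block is exactly what makes the averaged loop term in (\ref{eq}) appear with coefficient $\beta$ rather than $2\beta+\gamma$, and a careless accounting here would produce the wrong diagonal constant.
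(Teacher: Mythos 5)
The paper gives no proof of this theorem --- it is stated as ``following the treatment'' of the cited works of Cardoso et al., Alvir et al., and Godsil --- so there is nothing in-paper to compare against line by line. Your route is exactly the standard one and is surely what is intended: the equivalences among statements (2), (3) and (4) via the orthogonal projection $\mathcal{P}\mathcal{P}^T$ (using $\mathcal{P}^T\mathcal{P}=I_n$ and the Hermiticity of $\textbf{A}(X)$), the combinatorial equivalence by testing invariance on cell-constant vectors, and the identification $B=\mathcal{P}^T\textbf{A}(X)\mathcal{P}$ computed entrywise. The projection argument is correct, the off-diagonal entry via $|C_j|d_{j,\ell}=|C_\ell|d_{\ell,j}$ is right (up to the sign ambiguity already built into the paper's convention $\sqrt{d_{j,\ell}d_{\ell,j}}$ when inter-cell sums are negative), and your diagonal bookkeeping --- in particular the observation that $d_{j,j}$ must count the loop weight once for the averaged loop term to carry coefficient $\beta$ rather than $2\beta+\gamma$ --- is the correct reconciliation with \eqref{eq}.

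There is, however, a genuine gap at the pivotal step. Your own display for the $f_j'$-coefficient is $\beta(A)_{u,u}+(\beta+\gamma)s_u+\beta\sum_{r\neq j}d_{j,r}$, but the ensuing logic silently drops the term $\beta(A)_{u,u}$. What your computation actually establishes is that, once the between-cell sums are cell-constant, statement (3) holds if and only if $\beta(A)_{u,u}+(\beta+\gamma)s_u$ is constant on each cell. When $\beta=0$ this is equivalent to $s_u$ being cell-constant and the theorem follows; but when $\beta\neq 0$ and loop weights vary within a cell, neither implication between ``$s_u$ cell-constant'' and ``$\beta(A)_{u,u}+(\beta+\gamma)s_u$ cell-constant'' holds. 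Concretely, let a cell induce $K_3$ with loops of weights $1,2,3$ and edge weights $3,2,1$ arranged so that $s_u=6$ at every vertex of the cell; the partition can then be equitable in the stated sense while $\operatorname{deg}$ is not constant on the cell, so the column space of $\mathcal{P}$ is not invariant under $\beta D(X)$ and (1) fails to imply (2). Symmetrically, in the regime $\beta=-\gamma\neq 0$ the residual condition is that $\beta(A)_{u,u}$ be cell-constant, which almost equitability does not supply. So your proof (and indeed the theorem as literally stated) needs the additional hypothesis that loop weights are constant on each cell --- automatic for simple graphs, for singleton cells, and in every application the paper makes of this result, but not in general. You should either impose that hypothesis explicitly or explain why the intended notion of equitability forces it; as written, the sentence ``this term forces $s_u$ to be cell-constant'' is not justified.
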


We remark that $\mathcal{M}(X/\pi)\neq \textbf{A}(X/\pi)$. Moreover, a simple generalization of \cite[Theorem 9.3.3]{Godsil:AlgebraicGraph} yields $\sigma(\mathcal{M}(X/\pi))\subseteq\sigma(\textbf{A}(X))$.

Let $\pi$ be an equitable partition of $V(X)$ with characteristic matrix $\mathcal{P}$, and $\textbf{A}(X)=\sum_j\theta_jE_j$ be a spectral decomposition of $\textbf{A}(X)$. Using Theorem \ref{ep}(\ref{lap}) and the properties of $\mathcal{P}$ and the $E_j$'s, $\mathcal{M}(X/\pi)=\mathcal{P}^T\textbf{A}(X)P=\sum_j\theta_j\left(\mathcal{P}^TE_j \mathcal{P}\right)$ is a spectral decomposition of $\mathcal{M}(X/\pi)$. Let $C_{j}=\{u\}$ be a singleton cell in $\pi$ so that $\mathcal{P}\textbf{e}_{\{u\}}=\textbf{e}_u$. Following the same argument in \cite[Lemma 3.2]{Fan2013}, we get the following.

\begin{theorem}
\label{epsc}
Let $\pi$ be a partition of $V(X)$ with singleton cells $\{u\}$ and $\{v\}$. If either $\pi$ is equitable or $\pi$ is almost equitable and $\beta=-\gamma$, then $u$ and $v$ are strongly cospectral with respect to $\textbf{A}(X)$ if and only if $\{u\}$ and $\{v\}$ are strongly cospectral with respect to $\mathcal{M}(X/\pi)$.
\end{theorem}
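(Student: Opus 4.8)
The plan is to transfer strong cospectrality back and forth across the two matrices using the normalized characteristic matrix $\mathcal{P}$, exploiting three facts that hold under either hypothesis by Theorem \ref{ep}: that $\mathcal{P}^T\mathcal{P}=I$, that $\mathcal{P}\mathcal{P}^T$ commutes with $\textbf{A}(X)$ (hence with every $E_j$, since each $E_j$ is a polynomial in $\textbf{A}(X)$), and that a singleton cell satisfies $\mathcal{P}\textbf{e}_{\{u\}}=\textbf{e}_u$ together with $\mathcal{P}\mathcal{P}^T\textbf{e}_u=\textbf{e}_u$ (the diagonal block of $\mathcal{P}\mathcal{P}^T$ on a singleton cell is the scalar $1$).

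First I would record the spectral data of the quotient. Writing $\textbf{A}(X)=\sum_j\theta_jE_j$ and setting $F_j:=\mathcal{P}^TE_j\mathcal{P}$, the computation preceding the theorem shows $\mathcal{M}(X/\pi)=\sum_j\theta_jF_j$; I would confirm this is genuinely a spectral decomposition by checking $\sum_jF_j=\mathcal{P}^T\mathcal{P}=I$ and $F_jF_k=\delta_{jk}F_j$, the latter following from $\mathcal{P}^TE_j(\mathcal{P}\mathcal{P}^T)E_k\mathcal{P}=\mathcal{P}^T(\mathcal{P}\mathcal{P}^T)E_jE_k\mathcal{P}=\delta_{jk}F_j$ via the commuting relation and $\mathcal{P}^T\mathcal{P}=I$. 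Thus each nonzero $F_j$ is the orthogonal projection of $\mathcal{M}(X/\pi)$ onto the eigenspace for $\theta_j$, and $F_j=0$ precisely when $\theta_j\notin\sigma(\mathcal{M}(X/\pi))$.

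The key step is an identity linking the two families of projections on the singleton cells. Since $\mathcal{P}\textbf{e}_{\{u\}}=\textbf{e}_u$, I obtain $\mathcal{P}F_j\textbf{e}_{\{u\}}=\mathcal{P}\mathcal{P}^TE_j\textbf{e}_u=E_j\mathcal{P}\mathcal{P}^T\textbf{e}_u=E_j\textbf{e}_u$, where the middle equality uses that $\mathcal{P}\mathcal{P}^T$ commutes with $E_j$ and the last uses $\mathcal{P}\mathcal{P}^T\textbf{e}_u=\textbf{e}_u$; the same holds with $v$ replacing $u$. Because $\textbf{A}(X)$ and $\mathcal{M}(X/\pi)$ are real symmetric, strong cospectrality of $u,v$ with respect to $\textbf{A}(X)$ amounts to $E_j\textbf{e}_u=\pm E_j\textbf{e}_v$ for each $j$, while strong cospectrality of $\{u\},\{v\}$ with respect to $\mathcal{M}(X/\pi)$ amounts to $F_j\textbf{e}_{\{u\}}=\pm F_j\textbf{e}_{\{v\}}$ for each $j$.

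Finally I would close the equivalence index by index. Applying $\mathcal{P}$ to $F_j\textbf{e}_{\{u\}}=\pm F_j\textbf{e}_{\{v\}}$ yields $E_j\textbf{e}_u=\pm E_j\textbf{e}_v$ by the identity above, giving one direction; for the converse, $E_j\textbf{e}_u=\pm E_j\textbf{e}_v$ reads $\mathcal{P}F_j\textbf{e}_{\{u\}}=\mathcal{P}(\pm F_j\textbf{e}_{\{v\}})$, and since $\mathcal{P}^T\mathcal{P}=I$ makes $\mathcal{P}$ injective, left-multiplying by $\mathcal{P}^T$ cancels it to give $F_j\textbf{e}_{\{u\}}=\pm F_j\textbf{e}_{\{v\}}$. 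I expect the only delicate point to be the bookkeeping that this per-$j$ equivalence preserves the sign and correctly handles indices $j$ with $F_j=0$, where both sides vanish trivially, so that the matching signs reproduce the partition of the eigenvalue support into $\sigma_{uv}^+$ and $\sigma_{uv}^-$ on each side; everything else is a direct consequence of Theorem \ref{ep} and the basic properties of $\mathcal{P}$.
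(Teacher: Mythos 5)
Your proposal is correct and follows essentially the same route the paper takes: it uses Theorem \ref{ep} to get $\mathcal{M}(X/\pi)=\mathcal{P}^T\textbf{A}(X)\mathcal{P}=\sum_j\theta_j\bigl(\mathcal{P}^TE_j\mathcal{P}\bigr)$ as a spectral decomposition together with $\mathcal{P}\textbf{e}_{\{u\}}=\textbf{e}_u$, which is precisely the argument the paper invokes (via Fan and Godsil's Lemma 3.2) without writing out the details. Your verification that the $F_j$ are genuine orthogonal projections and the identity $\mathcal{P}F_j\textbf{e}_{\{u\}}=E_j\textbf{e}_u$ make the per-eigenvalue sign-preserving equivalence explicit, including the degenerate indices with $F_j=0$.
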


As $\textbf{e}_{\{u\}}^Te^{it\mathcal{M}(X/\pi)}\textbf{e}_{\{v\}}= \textbf{e}_{\{u\}}^Te^{it(\mathcal{P}^T\textbf{A}(X)P)}\textbf{e}_{\{v\}}=(\mathcal{P}\textbf{e}_{\{u\}})^Te^{it\textbf{A}(X)}P\textbf{e}_{\{v\}} =\textbf{e}_u^Te^{it\textbf{A}(X)}\textbf{e}_v$ because the exponential function is analytic, we obtain an extension of a result of Bachman et al.\ in \cite[Theorem 2]{Bachman2011}.

\begin{theorem}
\label{bachman}
Let $\pi$ be a partition of $V(X)$ with singleton cells $\{u\}$ and $\{v\}$. If either $\pi$ is equitable, or $\pi$ is almost equitable and $\beta=-\gamma$, then $\left(e^{it\mathcal{M}(X/\pi)}\right)_{\{u\},\{v\}}=\left(e^{it\textbf{A}(X)}\right)_{u,v}$.
\end{theorem}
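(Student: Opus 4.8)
The plan is to lift the intertwining relation between $\textbf{A}(X)$ and $\mathcal{M}(X/\pi)$ provided by Theorem \ref{ep} through the analytic function $z\mapsto e^{itz}$, and then read off the desired entry by evaluating against the characteristic vectors of the singleton cells. Under either hypothesis, Theorem \ref{ep}(\ref{lap}) supplies a matrix $B$ with $\textbf{A}(X)\mathcal{P}=\mathcal{P}B$, and the concluding statement of that theorem identifies $B=\mathcal{M}(X/\pi)$. Since $\mathcal{P}^T\mathcal{P}=I$, left-multiplying by $\mathcal{P}^T$ gives $\mathcal{M}(X/\pi)=\mathcal{P}^T\textbf{A}(X)\mathcal{P}$. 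The target is therefore the operator identity $e^{it\mathcal{M}(X/\pi)}=\mathcal{P}^T e^{it\textbf{A}(X)}\mathcal{P}$, after which the entry formula follows immediately.

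First I would record the key consequence of column-space invariance: from $\textbf{A}(X)\mathcal{P}=\mathcal{P}B$ and $\mathcal{P}^T\mathcal{P}=I$ one gets $\mathcal{P}\mathcal{P}^T\textbf{A}(X)\mathcal{P}=\mathcal{P}\mathcal{P}^T\mathcal{P}B=\mathcal{P}B=\textbf{A}(X)\mathcal{P}$. Using this to absorb the middle factor $\mathcal{P}\mathcal{P}^T$, a straightforward induction on $k$ yields $(\mathcal{P}^T\textbf{A}(X)\mathcal{P})^k=\mathcal{P}^T\textbf{A}(X)^k\mathcal{P}$ for every $k\geq 0$, the base case $k=0$ being $I=\mathcal{P}^T\mathcal{P}$. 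Because $z\mapsto e^{itz}$ is entire, I would then substitute this into the power series $e^{it\mathcal{M}(X/\pi)}=\sum_{k\geq 0}\frac{(it)^k}{k!}(\mathcal{P}^T\textbf{A}(X)\mathcal{P})^k$ and pull the constant factors $\mathcal{P}^T$ and $\mathcal{P}$ outside the sum to obtain $e^{it\mathcal{M}(X/\pi)}=\mathcal{P}^T\left(\sum_{k\geq 0}\frac{(it)^k}{k!}\textbf{A}(X)^k\right)\mathcal{P}=\mathcal{P}^T e^{it\textbf{A}(X)}\mathcal{P}$. Equivalently, one may invoke the spectral decomposition $\mathcal{M}(X/\pi)=\sum_j\theta_j\,\mathcal{P}^TE_j\mathcal{P}$ noted just before Theorem \ref{epsc} together with (\ref{anal}).

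Finally, since $\{u\}$ and $\{v\}$ are singleton cells, the corresponding columns of $\mathcal{P}$ are $\textbf{e}_u$ and $\textbf{e}_v$, i.e.\ $\mathcal{P}\textbf{e}_{\{u\}}=\textbf{e}_u$ and $\mathcal{P}\textbf{e}_{\{v\}}=\textbf{e}_v$. Sandwiching the operator identity between $\textbf{e}_{\{u\}}^T$ and $\textbf{e}_{\{v\}}$ then gives $\left(e^{it\mathcal{M}(X/\pi)}\right)_{\{u\},\{v\}}=(\mathcal{P}\textbf{e}_{\{u\}})^Te^{it\textbf{A}(X)}(\mathcal{P}\textbf{e}_{\{v\}})=\textbf{e}_u^Te^{it\textbf{A}(X)}\textbf{e}_v=\left(e^{it\textbf{A}(X)}\right)_{u,v}$, as required. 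The one genuinely load-bearing step is the power identity $(\mathcal{P}^T\textbf{A}(X)\mathcal{P})^k=\mathcal{P}^T\textbf{A}(X)^k\mathcal{P}$: this is exactly where the equitable (or almost-equitable with $\beta=-\gamma$) hypothesis enters, through the invariance of the column space of $\mathcal{P}$, and it is precisely what fails for a non-square $\mathcal{P}$ in the absence of that hypothesis. Everything else is routine.
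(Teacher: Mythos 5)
Your proposal is correct and follows essentially the same route as the paper: the paper's argument is the displayed chain $\textbf{e}_{\{u\}}^Te^{it\mathcal{M}(X/\pi)}\textbf{e}_{\{v\}}=\textbf{e}_{\{u\}}^Te^{it(\mathcal{P}^T\textbf{A}(X)\mathcal{P})}\textbf{e}_{\{v\}}=(\mathcal{P}\textbf{e}_{\{u\}})^Te^{it\textbf{A}(X)}\mathcal{P}\textbf{e}_{\{v\}}=\textbf{e}_u^Te^{it\textbf{A}(X)}\textbf{e}_v$, justified by analyticity of the exponential and the identity $\mathcal{M}(X/\pi)=\mathcal{P}^T\textbf{A}(X)\mathcal{P}$ from Theorem \ref{ep}. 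Your power-identity induction $(\mathcal{P}^T\textbf{A}(X)\mathcal{P})^k=\mathcal{P}^T\textbf{A}(X)^k\mathcal{P}$ simply makes explicit the step the paper attributes to analyticity, so the two proofs coincide in substance.
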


Lastly, we give the following result about twin vertices that are singleton cells in an equitable partition.

\begin{theorem}
\label{twineq}
Let $\pi$ of $V(X)$ with singleton cells $\{u\}$ and $\{v\}$ such that either $\pi$ is equitable, or $\pi$ is almost equitable and $\beta=-\gamma$. If $u,v\in T(\omega,\eta)$, then $\textbf{e}_{\{u\}}-\textbf{e}_{\{v\}}$ is an eigenvector for $\mathcal{M}(X/\pi)$ associated to $\theta$ given in (\ref{eval}). Conversely, if $\textbf{e}_{\{u\}}-\textbf{e}_{\{v\}}$ is an eigenvector for $\mathcal{M}(X/\pi)$ associated to $\theta$ given in (\ref{eval}), and either $\beta\neq -\frac{\gamma}{2}$ or $(A(X))_{u,u}=(A(X))_{v,v}$, then $u,v\in T(\omega,\eta)$.
\end{theorem}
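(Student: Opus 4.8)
The plan is to prove both implications by transporting the eigenvector equation between the quotient matrix $\mathcal{M}(X/\pi)$ and the full matrix $\textbf{A}(X)$, and then reading off the twin structure from Lemma \ref{eu-ev}. The only structural facts I would need are the intertwining relation supplied by Theorem \ref{ep} and the behaviour of the normalized characteristic matrix on singleton cells. Specifically, under the hypothesis that $\pi$ is equitable, or almost equitable with $\beta=-\gamma$, Theorem \ref{ep}(\ref{lap}) gives $\textbf{A}(X)\mathcal{P}=\mathcal{P}\,\mathcal{M}(X/\pi)$, and since $\mathcal{P}^T\mathcal{P}=I$ we also obtain $\mathcal{M}(X/\pi)=\mathcal{P}^T\textbf{A}(X)\mathcal{P}$. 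Because $\{u\}$ and $\{v\}$ are singleton cells, the definition of $\mathcal{P}$ yields the transfer identities $\mathcal{P}\textbf{e}_{\{u\}}=\textbf{e}_u$, $\mathcal{P}\textbf{e}_{\{v\}}=\textbf{e}_v$ and, dually, $\mathcal{P}^T\textbf{e}_u=\textbf{e}_{\{u\}}$, $\mathcal{P}^T\textbf{e}_v=\textbf{e}_{\{v\}}$. These four identities are the engine of the whole argument.

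For the forward direction I would start from $u,v\in T(\omega,\eta)$. The last statement of Lemma \ref{eu-ev} then tells us that $\textbf{e}_u-\textbf{e}_v$ is an eigenvector of $\textbf{A}(X)$ for the eigenvalue $\theta$ in (\ref{eval}). Applying $\mathcal{M}(X/\pi)=\mathcal{P}^T\textbf{A}(X)\mathcal{P}$ to $\textbf{e}_{\{u\}}-\textbf{e}_{\{v\}}$ and pushing the transfer identities through gives
\[
\mathcal{M}(X/\pi)(\textbf{e}_{\{u\}}-\textbf{e}_{\{v\}})=\mathcal{P}^T\textbf{A}(X)(\textbf{e}_u-\textbf{e}_v)=\theta\,\mathcal{P}^T(\textbf{e}_u-\textbf{e}_v)=\theta(\textbf{e}_{\{u\}}-\textbf{e}_{\{v\}}),
\]
which is exactly the desired eigenvector statement for $\mathcal{M}(X/\pi)$.

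For the converse I would run the same computation in the opposite direction. Assuming $\textbf{e}_{\{u\}}-\textbf{e}_{\{v\}}$ is an eigenvector of $\mathcal{M}(X/\pi)$ for $\theta$, I multiply by $\mathcal{P}$ and use $\textbf{A}(X)\mathcal{P}=\mathcal{P}\,\mathcal{M}(X/\pi)$ to get $\textbf{A}(X)(\textbf{e}_u-\textbf{e}_v)=\theta(\textbf{e}_u-\textbf{e}_v)$, so $\textbf{e}_u-\textbf{e}_v$ is an eigenvector of $\textbf{A}(X)$. Lemma \ref{eu-ev}(\ref{eu-ev1}) then delivers $(A(X))_{j,u}=(A(X))_{j,v}$ for every $j\neq u,v$, which is precisely conditions (1) and (2) of being twins, together with the relation $(2\beta+\gamma)[(A(X))_{u,u}-(A(X))_{v,v}]=0$. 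If $\beta\neq-\tfrac{\gamma}{2}$ then $2\beta+\gamma\neq 0$ forces $(A(X))_{u,u}=(A(X))_{v,v}$, while in the remaining case this equality is assumed outright; either way the loops agree (condition (3)). Setting $\omega:=(A(X))_{u,u}=(A(X))_{v,v}$ and $\eta:=(A(X))_{u,v}$ then gives $u,v\in T(\omega,\eta)$.

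The hard part is not the transfer argument, which is routine once the intertwining relation is in place, but the handling of the loop weights in the converse. The factor $(2\beta+\gamma)$ appearing in Lemma \ref{eu-ev}(\ref{eu-ev1}) means that equality of the diagonal (loop) weights is automatic only when $\beta\neq-\tfrac{\gamma}{2}$; in the degenerate case $\beta=-\tfrac{\gamma}{2}$ the eigenvector condition is too weak to detect a mismatch on the diagonal, so the explicit alternative hypothesis $(A(X))_{u,u}=(A(X))_{v,v}$ is genuinely needed. This is exactly the phenomenon already flagged in the remark following Lemma \ref{eu-ev}, and isolating it is the only point of the proof requiring care.
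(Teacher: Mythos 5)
Your proof is correct, but the converse is argued by a genuinely different route from the paper's. The paper stays entirely at the quotient level: it compares the entries of $\mathcal{M}(X/\pi)(\textbf{e}_1-\textbf{e}_2)=\theta(\textbf{e}_1-\textbf{e}_2)$ against the explicit formula (\ref{eq}), extracting $d_{j,1}d_{1,j}=d_{j,2}d_{2,j}$ for $j\neq 1,2$ and, when $\beta\neq-\tfrac{\gamma}{2}$, the equality $\mathcal{M}_{1,1}=\mathcal{M}_{2,2}$, which it then translates back into edge- and loop-weight statements about $X$. You instead lift the eigenvector through the intertwining relation $\textbf{A}(X)\mathcal{P}=\mathcal{P}\mathcal{M}(X/\pi)$ together with the singleton-cell identities $\mathcal{P}\textbf{e}_{\{u\}}=\textbf{e}_u$ and $\mathcal{P}^T\textbf{e}_u=\textbf{e}_{\{u\}}$, land on an eigenvector of $\textbf{A}(X)$ itself, and let Lemma \ref{eu-ev}(\ref{eu-ev1}) do the rest; the forward direction (which the paper dismisses as ``clear'') is the same computation read in reverse. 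Both arguments are sound and rest on the same two ingredients (Theorem \ref{ep} and the twin characterization), but yours has a small advantage: Lemma \ref{eu-ev} hands you $(A(X))_{j,u}=(A(X))_{j,v}$ entrywise, signs included, whereas the paper's quotient-level step only pins down $d_{j,1}d_{1,j}=d_{j,2}d_{2,j}$, i.e.\ $|d_{j,1}|=|d_{j,2}|$ after substituting $d_{1,j}=|C_j|d_{j,1}$, so with negative edge weights an extra word is needed there. The paper's version, in exchange, exhibits the concrete equalities among the $d_{j,\ell}$ and the diagonal entries of $\mathcal{M}$, which is what gets reused in the proof of Theorem \ref{dc2}(\ref{yes}). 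Your isolation of the $\beta=-\tfrac{\gamma}{2}$ degeneracy matches the paper's treatment exactly.
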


\begin{proof}
The forward implication is clear, so it suffices to prove the converse. Assume $\pi=(C_1,C_2,\ldots,C_n)$, where $C_1=\{u\}$ and $C_2=\{v\}$. Suppose $\textbf{e}_1-\textbf{e}_2$ is an eigenvector for $\mathcal{M}(X/\pi)$ associated to $\theta$ given in (\ref{eval}). That is, $\mathcal{M}(X/\pi)\left(\textbf{e}_1-\textbf{e}_2\right)=\theta\left(\textbf{e}_1-\textbf{e}_2\right)$. Comparing entries yields $\mathcal{M}(X/\pi)_{1,1}=\mathcal{M}(X/\pi)_{2,2}$ and $\mathcal{M}(X/\pi)_{j,1}=\mathcal{M}(X/\pi)_{j,2}$ for each $j\neq u,v$. The latter equation yields $d_{j,1}d_{1,j}=d_{j,2}d_{2,j}$ for $j\neq 1,2$. Since $d_{1,j}=|C_j|d_{j,1}$ and $d_{2,j}=|C_j|d_{j,2}$, we obtain $d_{j,1}=d_{j,2}$. That is, $(w,u)$ and $(w,v)$ have equal equal weights for any $w\neq u,v$. If $\beta\neq -\frac{\gamma}{2}$, the former equation yields $(A)_{u,u}=(A)_{v,v}=\omega$. Thus, if we add that either $\beta\neq -\frac{\gamma}{2}$ or $(A)_{u,u}=(A)_{v,v}=\omega$, then $u,v\in T(\omega,\eta)$, where $\eta$ is the weight of the edge between $u$ and $v$.
\end{proof}


\section{Joins}\label{joins}

Let $\omega,\eta\in\mathbb{R}$. For $n\geq 1$, let $\textbf{O}_n(\omega)$ denote the \textit{empty graph} on $n$ vertices with possible loops, where every loop on each vertex has weight $\omega$ and for $n\geq 2$, let $\textbf{K}_n(\omega,\eta)$ denote the \textit{weighted complete graph} on $n$ vertices with possible loops, where every loop on each vertex has weight $\omega$, and every edge between distinct vertices has weight $\eta\neq 0$. If $\omega=0$, then the loops on these graphs are absent. We also let $ \bigvee_j \textbf{K}_{n_j}(\omega_j,\eta_j)$ be the weighted complete graph with possible loops, where every pair of vertices in $\textbf{K}_{n_j}(\omega_j,\eta_j)$ are true twins in $\bigvee_j \textbf{K}_{n_j}(\omega_j,\eta_j)$ for each $j$ whenever $m_j\geq 2$, and $\bigvee_j \textbf{O}_{m_j}(\omega_j)$ be a weighted complete multipartite graph with possible loops, where every pair of vertices in $\textbf{O}_{m_j}(\omega_j,\eta_j)$ are false twins in $\bigvee_j \textbf{O}_{m_j}(\omega_j)$ for each $j$ whenever $m_j\geq 2$. In particular, if $j\in\{1,2\}$, then $\textbf{O}_{m_1}(\omega_1)\vee \textbf{O}_{m_2}(\omega_2)$ is a weighted complete bipartite graph with possible loops. We state a basic fact about graphs with twins.

\begin{proposition}
\label{propo}
Let $T=T(\omega,\eta)$ be a set of twins in $X$ with $|T|=m$.
\begin{enumerate}
\item If $\eta\neq 0$, then the induced subgraph of $T$ is isomorphic to $\textbf{K}_{|T|}(\omega,\eta)$. The following also hold.
\begin{enumerate}
\item $T=V(X)$ if and only if $X\cong \textbf{K}_{|V(X)|}(\omega,\eta)$.
\item If $T\neq V(X)$, then $m\leq |V(X)|-2$. Moreover, if $V(X)\backslash T$ is a set of false twins in $X$, then $X\cong \textbf{K}_{m}(\omega,\eta)\vee \textbf{O}_{|V(X)|-m}(\omega')$.
\item  \label{join2} If $\{T_j\}$ is a partition of $V(X)$ such that each $T_j=T_j(\omega_j,\eta_j)$ is a set of true twins in $X$ with $|T_j|=m_j$, then $X\cong\bigvee_j \textbf{K}_{m_j}(\omega_j,\eta_j)$.
\end{enumerate}
\item If $\eta=0$, then the induced subgraph of $T$ is isomorphic to $\textbf{O}_{m}(\omega)$. The following also hold.
\begin{enumerate}
\item $m\leq |V(X)|-1$, and if $m=|V(X)|-1$, then $X\cong \textbf{O}_{m}(\omega)\vee \textbf{O}_{1}(\omega')$.
\item \label{join1} If $\{T_j\}$ is a partition of $V(X)$ such that each $T_j=T_j(\omega_j,0)$ is a set of false twins in $X$ with $|T_j|=m_j$, then $X\cong\bigvee_j \textbf{O}_{m_j}(\omega_j)$.
\end{enumerate}
\end{enumerate}
\end{proposition}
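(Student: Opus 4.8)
The plan is to derive every assertion from a single uniform-attachment observation. Directly from condition (2) in the definition of twins, if $u,u'\in T$ and $w\notin\{u,u'\}$, then comparing the edges $(u,w)$ and $(u',w)$ shows that $w$ is adjacent to $u$ precisely when it is adjacent to $u'$, and then with the same weight. Applying this to all pairs in $T$ yields the statement I would isolate as a preliminary lemma: each $w\in V(X)\setminus T$ is either joined to every vertex of $T$ by edges of one common weight, or to none of them. This dichotomy, together with the standing assumption that $X$ is connected, drives all the structural conclusions below.

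The two opening assertions are immediate from the definition of a set of twins. If $\eta\neq 0$, every pair in $T$ is adjacent with weight $\eta$ and carries a loop of weight $\omega$, so the subgraph induced on $T$ is complete with uniform edge and loop weights, that is, $\textbf{K}_{|T|}(\omega,\eta)$; if $\eta=0$, no two vertices of $T$ are adjacent and the induced subgraph is $\textbf{O}_m(\omega)$. Part (1a) then follows: taking $T=V(X)$ makes this induced subgraph the whole of $X$, and conversely in a complete graph with uniform weights every pair of vertices is a true twin of the common type, so $V(X)$ is such a set of twins.

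For the size bounds I would argue by contradiction using connectedness and the attachment lemma. In the false-twin case $T=V(X)$ would make $X$ edgeless, hence disconnected once $|V(X)|\geq 2$, giving $m\leq |V(X)|-1$; and when a single vertex $w$ lies outside $T$, the lemma and connectedness force $w$ to be joined to all of $T$ by edges of a common weight, which exhibits $X$ as the join $\textbf{O}_m(\omega)\vee\textbf{O}_1(\omega')$. The true-twin bound (1b) runs in the same spirit: assuming $T\neq V(X)$ possesses a single exterior vertex, the lemma and connectedness pin down how that vertex meets $T$, and the task is to show this configuration is incompatible with $T$ being exactly the set of $(\omega,\eta)$-twins, so that at least two exterior vertices are present; when $V(X)\setminus T$ is itself a false-twin set the join $\textbf{K}_{m}(\omega,\eta)\vee\textbf{O}_{|V(X)|-m}(\omega')$ again falls out of the attachment lemma.

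The partition statements (1c) and (2b) are where I expect the real difficulty, and I would treat them last by running the attachment lemma \emph{between} cells rather than inside one. For distinct cells $T_i$ and $T_j$, applying the lemma from the twins inside each cell shows the adjacency between $T_i$ and $T_j$ is all-or-nothing with a single common weight. The remaining work — which I anticipate being the main obstacle — is to assemble these cell-to-cell relations into the claimed global join $\bigvee_j\textbf{K}_{m_j}(\omega_j,\eta_j)$ (respectively $\bigvee_j\textbf{O}_{m_j}(\omega_j)$): one must use connectedness together with the twin structure of the cells to exclude the degenerate possibility that two cells are entirely non-adjacent, so that the pieces fit together as a single join rather than a looser amalgam. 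Pinning down the hypotheses on the cells that make this bookkeeping go through is, to my mind, the crux of the argument.
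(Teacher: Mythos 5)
The paper states this proposition without proof (it is introduced as ``a basic fact''), so there is no argument of the paper's to compare yours against; I can only assess the proposal on its own terms. Your attachment lemma --- each $w\notin T$ is joined to all of $T$ with one common weight or to none of it --- is correct and is the right engine; it does dispose of the two opening assertions, of (1a), of the bound and join in (2a), and of the join in the second half of (1b). But the two steps you explicitly defer are not bookkeeping: they are genuine gaps, and as the statement is literally written they cannot be closed. For the bound $m\le|V(X)|-2$ in (1b), take the weighted triangle on $\{u_1,u_2,w\}$ with edge $(u_1,u_2)$ of weight $1$ and edges $(u_1,w),(u_2,w)$ of weight $2$. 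Then $T=\{u_1,u_2\}$ is a set of true twins $T(0,1)$ (even a maximal one, since $w$ cannot be adjoined because the weights disagree), $T\neq V(X)$, and $m=|V(X)|-1$. The ``incompatibility'' you hope to derive from a single exterior vertex simply is not there in the weighted setting: that vertex may attach to all of $T$ with a weight different from $\eta$, or carry a different loop, and nothing contradicts the definition of a set of twins. The bound does hold for simple unweighted graphs with $T$ maximal, but your proof would have to invoke such hypotheses explicitly.

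The same obstruction defeats your plan for (1c) and (2b). Your cell-to-cell dichotomy (between two cells the bipartite graph is complete with one common weight, or empty) is correct, but the degenerate case you want to exclude --- two cells entirely non-adjacent --- is perfectly consistent with connectedness and with every cell being a set of twins. Take three true-twin pairs $T_1,T_2,T_3$, each an edge of weight $1$, with $T_1$ to $T_2$ and $T_2$ to $T_3$ complete bipartite of weight $1$ and no edges between $T_1$ and $T_3$: the graph is connected, each $T_j$ is a set of true twins $T_j(0,1)$, and $\{T_1,T_2,T_3\}$ partitions the vertex set, yet the graph is not complete and hence is not $\bigvee_j\textbf{K}_{m_j}(\omega_j,\eta_j)$. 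The analogous configuration with three independent pairs refutes the literal form of (2b). So the ``crux'' you identified is not merely hard; it is a counterexample waiting to happen. To finish you must either add the hypothesis that every two cells are joined (equivalently, weaken the conclusion to a join along the cell graph), or restrict to a setting where the attachment lemma plus connectedness genuinely forces all cells pairwise adjacent; the argument as announced cannot succeed.
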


The following example reveals an important fact about $\textbf{K}_n(\omega,\eta)$.

\begin{example}
\label{km}
The two vertices in $\textbf{K}_2(\omega,\eta)$ are strongly cospectral with respect to $M$. Now, let $n\geq 3$. By Corollary \ref{cosptw}, all vertices of $\textbf{K}_n(\omega,\eta)$ are pairwise cospectral with respect to $M$. Since Lemma \ref{eu-ev} implies that $\theta$ is an eigenvalue of $M$ with multiplicity $n-1$, Proposition \ref{esupp} yields $|\sigma_u(M)|=2$ for any vertex $u$. Invoking Theorem \ref{esupp+}, no two vertices of $\textbf{K}_n(\omega,\eta)$ are strongly cospectral with respect to $M$.
\end{example}

It turns out, the same holds for joins of $\textbf{K}_n(\omega,\eta)$ and joins of $\textbf{O}_m(\omega)$.

\begin{example}
\label{threetw1}
Let $X\cong \bigvee _j \textbf{K}_{n_j}(\omega_j,\eta_j)$ and $Y\cong \bigvee _j \textbf{O}_{m_j}(\omega_j)$. Assume $\{T_j\}$ is a partition of the vertex set of $X$ and $Y$ such that the induced subgraph of each $T_j$ in $X$ is $\textbf{K}_{n_j}(\omega_j,\eta_j)$ while the induced subgraph of each $T_j$ in $Y$ is $\textbf{O}_{m_j}(\omega_j)$. Every pair of vertices in $T_j$ are true twins in $X$ and false twins in $Y$. If $|T_j|\geq 3$, then, by Corollary \ref{3tw}, any vertex in $T_j$ cannot be strongly cospectral with any vertex of $T_{\ell}$ for $j\neq \ell$ with respect to $M$. In particular, if $|T_j|\geq 3$ for each $j$, then no two vertices in $X$ and $Y$ are strongly cospectral. These results extend to the join $X\vee Y$ provided every pair of vertices in each $T_j$ are twins in $X\vee Y$.
\end{example}

We now investigate strong cospectrality in joins of the complete or empty graph with a regular graph.

\begin{theorem}
\label{dc2}
For $n\geq 2$, let $X$ be either $\textbf{K}_n(\omega,\eta)$ or $\textbf{O}_n(\omega)$, and for $n=1$, let $X\cong \textbf{O}_1(\omega)$. For any weighted graph $H$ on $m$ vertices with possible loops, define $X\vee H$ as the connected weighted graph such that for a fixed $v\in V(H)$, the edges $(u,v)$ for each $u\in V(X)$ have the equal weights. The following hold.
\begin{enumerate}
\item \label{dc3} For $n\geq 3$, no vertex in $X$ is strongly cospectral with any vertex in $X\vee H$ with respect to $M(X\vee H)$.
\item \label{wawa1} Suppose $n=1$, and $V(X)=\{u\}$. If
\begin{equation}
\label{dc}
\operatorname{tr}(\textbf{A}(X\vee H)[u])=\operatorname{tr}(\textbf{A}(X\vee H)[v])
\end{equation}
does not hold for some $v\in V(H)$, then $u$ and $v$ are not strongly cospectral with respect to $\textbf{A}(X\vee H)$.
\item \label{yes} Let $n=2$. Assume each edge joining $X$ and $H$ has weight $\delta$, and let $\pi=(C_1,C_2,C_3)$ be a partition of $V(X\vee H)$ such that $C_1$ and $C_2$ are singleton cells containing each of the two vertices in $X$ and $C_3=V(H)$. Then $\pi$ is almost equitable, and if we further assume that $\operatorname{deg}(w)-(A)_{w,w}$ is a constant $d$ for each $w\in V(H)$, then $\pi$ is equitable. Moreover, if either $\beta=-\gamma$ or $\operatorname{deg}(w)-(A)_{w,w}$ is a constant $d$ for each $w\in V(H)$, then $u$ and $v$ are strongly cospectral with respect to $\textbf{A}(X\vee H)$ if and only if the entries of $\mathcal{M}=\mathcal{M}(X\vee H/\pi)$ do not satisfy
\begin{equation}
\label{M}
[(\mathcal{M})_{1,3}]^2 =(\mathcal{M})_{1,2}[(\mathcal{M})_{1,2}-(\mathcal{M})_{1,1}+(\mathcal{M})_{3,3}].
\end{equation}
\end{enumerate}
\end{theorem}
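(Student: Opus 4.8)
The plan is to treat the three parts by separate mechanisms, each reducing to machinery already developed. Throughout, write $u,v$ for the vertices of $X$, and note that the defining condition on $X\vee H$ forces the vertices of $X$ to be a set of twins in $X\vee H$: they share every neighbour in $H$ with equal edge weights, they are mutual true twins (weight $\eta$) when $X=\textbf{K}_n(\omega,\eta)$ and mutual false twins when $X=\textbf{O}_n(\omega)$, and they all carry the loop weight $\omega$. Hence for $n\geq 3$ the vertices of $X$ constitute a set of twins $T=T(\omega,\eta)$ in $X\vee H$ with $|T|=n\geq 3$, and Corollary \ref{3tw} says that each $w\in T$ fails to be parallel, hence fails to be strongly cospectral, with every other column of $M(X\vee H)$. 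In particular no vertex of $X$ is strongly cospectral with any vertex of $X\vee H$, which settles part \ref{dc3}.

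For part \ref{wawa1}, I would use only that strong cospectrality implies cospectrality (Theorem \ref{sciffcp}, \ref{1}$\Rightarrow$\ref{2}) together with the characterization of cospectrality by $\phi_u(\textbf{A},t)=\phi_v(\textbf{A},t)$ from Theorem \ref{cosp}. Both $\phi_u$ and $\phi_v$ are monic of the same degree $|V(X\vee H)|-1$, and the coefficient of the second-highest power of $t$ in $\phi_u(\textbf{A},t)=\det\big((tI-\textbf{A})[u]\big)$ equals $-\operatorname{tr}(\textbf{A}(X\vee H)[u])$. Thus cospectrality of $u$ and $v$ forces $\operatorname{tr}(\textbf{A}(X\vee H)[u])=\operatorname{tr}(\textbf{A}(X\vee H)[v])$, and the contrapositive is exactly the stated implication.

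Part \ref{yes} is the substantive one. First I would verify the partition claims directly from the definitions: the only nontrivial off-diagonal sums are those from a fixed $w\in V(H)$ to the singleton cells $C_1,C_2$, each equal to the constant $\delta$, so $\pi$ is almost equitable; and since $\operatorname{deg}(w)-(A)_{w,w}$ is precisely the full row sum of $A(X\vee H)$ at $w$, which differs from the within-$H$ row sum by the constant $n\delta=2\delta$, requiring $\operatorname{deg}(w)-(A)_{w,w}$ to be constant is exactly the within-cell regularity on $C_3=V(H)$ that upgrades $\pi$ to an equitable partition. Under either hypothesis ($\beta=-\gamma$, or this regularity) Theorem \ref{epsc} reduces the question to strong cospectrality of $C_1,C_2$ in the $3\times 3$ quotient matrix $\mathcal{M}=\mathcal{M}(X\vee H/\pi)$. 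By the symmetry interchanging $u$ and $v$ together with formula (\ref{eq}) one has $(\mathcal{M})_{1,1}=(\mathcal{M})_{2,2}$ and $(\mathcal{M})_{1,3}=(\mathcal{M})_{2,3}$, so $\textbf{e}_1-\textbf{e}_2$ is an eigenvector of $\mathcal{M}$ with eigenvalue $\theta=(\mathcal{M})_{1,1}-(\mathcal{M})_{1,2}$, matching $\theta$ in (\ref{eval}). I would then split $\mathbb{R}^3=\operatorname{span}\{\textbf{e}_1-\textbf{e}_2\}\oplus\operatorname{span}\{\textbf{e}_1+\textbf{e}_2,\textbf{e}_3\}$ into the ($\mathcal{M}$-invariant) antisymmetric line and symmetric plane. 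The restriction of $\mathcal{M}$ to the symmetric plane has, in the orthonormal basis $\tfrac{1}{\sqrt2}(\textbf{e}_1+\textbf{e}_2),\textbf{e}_3$, the symmetric matrix $\left[\begin{smallmatrix}(\mathcal{M})_{1,1}+(\mathcal{M})_{1,2} & \sqrt2(\mathcal{M})_{1,3}\\ \sqrt2(\mathcal{M})_{1,3} & (\mathcal{M})_{3,3}\end{smallmatrix}\right]$, and evaluating its characteristic polynomial at $\theta$ gives $2(\mathcal{M})_{1,2}\big[(\mathcal{M})_{3,3}-(\mathcal{M})_{1,1}+(\mathcal{M})_{1,2}\big]-2(\mathcal{M})_{1,3}^2$, whose vanishing is precisely (\ref{M}). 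So (\ref{M}) holds iff $\theta$ is also an eigenvalue of this symmetric block. When (\ref{M}) fails, the $\theta$-eigenspace is the antisymmetric line and the remaining eigenspaces lie in the symmetric plane, whence $E_\lambda\textbf{e}_1=\pm E_\lambda\textbf{e}_2$ for every $\lambda$ and $C_1,C_2$ are strongly cospectral; combined with Theorem \ref{epsc} this yields the claimed equivalence.

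The main obstacle I anticipate is the forward bookkeeping in part \ref{yes}: confirming that when (\ref{M}) \emph{holds} the $\theta$-eigenspace genuinely mixes a symmetric vector with $\textbf{e}_1-\textbf{e}_2$, so that strong cospectrality is destroyed rather than accidentally preserved. This is where connectedness of $X\vee H$ is essential: it forces $(\mathcal{M})_{1,3}=\gamma|\delta|\sqrt{m}\neq 0$ (as $\delta\neq0$), which rules out the degenerate possibility that the symmetric $\theta$-eigenvector equals $\textbf{e}_3$ and guarantees $E_\theta(\textbf{e}_1+\textbf{e}_2)\neq 0$; moreover, when (\ref{M}) holds, $(\mathcal{M})_{1,3}\neq0$ forces $(\mathcal{M})_{1,2}\neq0$, so the symmetric eigenvector has a nonzero $(\textbf{e}_1+\textbf{e}_2)$-component. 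Since also $E_\theta(\textbf{e}_1-\textbf{e}_2)=\textbf{e}_1-\textbf{e}_2\neq0$, one concludes $E_\theta\textbf{e}_1\neq\pm E_\theta\textbf{e}_2$, so $C_1,C_2$ are not strongly cospectral, completing both directions.
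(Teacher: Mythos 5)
Your proposal is correct, and for parts \ref{dc3} and \ref{wawa1} it is essentially the paper's argument: part \ref{dc3} via Corollary \ref{3tw} applied to the twin set $V(X)$, and part \ref{wawa1} via the trace being determined by $\phi_u(\textbf{A},t)$, so unequal traces preclude cospectrality and hence strong cospectrality. In part \ref{yes} you follow the same reduction (the partition checks, Theorems \ref{epsc} and \ref{twineq}, and the twin eigenvector $\textbf{e}_1-\textbf{e}_2$ for $\theta$), but you establish the equivalence with (\ref{M}) by a genuinely different and tighter mechanism: you split $\mathbb{R}^3$ into the $\mathcal{M}$-invariant antisymmetric line and symmetric plane, and observe that (\ref{M}) is exactly the vanishing at $\theta$ of the characteristic polynomial of the $2\times 2$ block $\left[\begin{smallmatrix}(\mathcal{M})_{1,1}+(\mathcal{M})_{1,2} & \sqrt2\,(\mathcal{M})_{1,3}\\ \sqrt2\,(\mathcal{M})_{1,3} & (\mathcal{M})_{3,3}\end{smallmatrix}\right]$. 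The paper instead asserts that when (\ref{M}) holds, $\textbf{e}_1-\textbf{e}_3$ becomes a $\theta$-eigenvector; as literally stated that requires $(\mathcal{M})_{1,2}=(\mathcal{M})_{1,3}$ and $(\mathcal{M})_{1,1}=(\mathcal{M})_{3,3}$, which is sufficient for (\ref{M}) but not necessary, so your route is the one that actually proves the stated biconditional. You also supply a detail the paper glosses over: using $(\mathcal{M})_{1,3}=\gamma|\delta|\sqrt{m}\neq 0$ to rule out the degenerate case where the symmetric $\theta$-eigenvector is $\textbf{e}_3$ (and hence orthogonal to $\textbf{e}_1$ and $\textbf{e}_2$), which is needed to conclude that strong cospectrality genuinely fails when (\ref{M}) holds. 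The only cosmetic remark is that in part \ref{dc3} the paper additionally invokes Theorem \ref{partw}(\ref{notparr}), but Corollary \ref{3tw} alone suffices as you use it.
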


\begin{figure}[h!]
	\begin{center}
		\begin{tikzpicture}
		\tikzset{enclosed/.style={draw, circle, inner sep=0pt, minimum size=.2cm}}
	   
	   \node[enclosed, fill=cyan] (u_1) at (2,2) {};
		\node[enclosed, fill=cyan] (u_2) at (1.3,1.35) {};
		\node[enclosed, fill=cyan] (u_3) at (2.7,1.35) {};
		\node[enclosed] (u_4) at (2,-0.05) {};
		\node[enclosed] (u_5) at (1,0.25) {};
		\node[enclosed] (u_6) at (3,0.25) {};
		
		\draw (u_1) --  (u_2);
		\draw (u_2) --  (u_3);
		\draw (u_1) --  (u_3);
		\draw (u_1) --  (u_4);
		\draw (u_1) --  (u_5);
		\draw (u_1) --  (u_6);
		\draw (u_2) --  (u_4);
		\draw (u_2) --  (u_5);
		\draw (u_2) --  (u_6);
		\draw (u_3) --  (u_4);
		\draw (u_3) --  (u_5);
		\draw (u_3) --  (u_6);
		
		\node[enclosed, fill=cyan] (v_1) at (5.5,1.8) {};
		\node[enclosed, fill=cyan] (v_2) at (6.5,1.8) {};
		\node[enclosed, fill=cyan] (v_3) at (7.5,1.8) {};
		\node[enclosed] (v_4) at (5,0.3) {};
		\node[enclosed] (v_5) at (6,0.3) {};
		\node[enclosed] (v_6) at (7.,0.3) {};
		\node[enclosed] (v_7) at (8,0.3) {};
			
		\draw (v_1) --  (v_4);
		\draw (v_1) --  (v_5);
		\draw (v_1) --  (v_6);
		\draw (v_1) --  (v_7);
		\draw (v_2) --  (v_4);
		\draw (v_2) --  (v_5);
		\draw (v_2) --  (v_6);
		\draw (v_2) --  (v_7);
		\draw (v_3) --  (v_4);
		\draw (v_3) --  (v_5);
		\draw (v_3) --  (v_6);
		\draw (v_3) --  (v_7);
			
		\end{tikzpicture}
	\end{center}
	\caption{Unweighted joins: $K_3\vee O_3$ (left) and $O_3\vee O_4$ (right)}\label{join}
\end{figure}
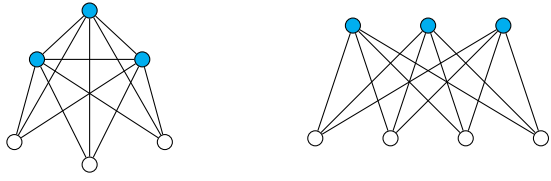

\begin{proof}
For $n\geq 2$, the assumption implies that each pair of vertices in $X$ are twins in $X\vee H$. Thus, if $n\geq 3$, then Corollary \ref{3tw} and Theorem \ref{partw}(\ref{notparr}) altogether proves 1.

Now, let $n=1$ and assume $V(X)=\{u\}$. If (\ref{dc}) does not hold for some $v\in V(H)$, then $\textbf{A}(X\vee H)[u]$ and $\textbf{A}(X\vee H)[v]$ have distinct spectra, and so $\phi_u(X\vee H,t)\neq \phi_v(X\vee H,t)$. That is, $u$ and $v$ are not cospectral with respect to $\textbf{A}$, and in turn, not strongly cospectral with respect to $\textbf{A}$. 

Finally, let $n=2$ and suppose every edge joining $X$ and $H$ has weight $\delta$. Let $V(X)=\{u,v\}$, and consider $\pi=(C_1,C_2,C_3)$ determined by $C_1=\{u\}$ $C_2=\{v\}$ and $C_3=\{V(H)\}$. Then $\pi$ is almost equitable, and if we add that $\operatorname{deg}(w)-(A)_{w,w}$ is constant for all $w\in V(H)$, then $\pi$ is equitable. Now, suppose either $\beta=-\gamma$ or $\operatorname{deg}(w)-(A)_{w,w}$ is constant for all $w\in V(H)$. Then $\mathcal{M}=\mathcal{M}(X\vee H/\pi)$ is a $3\times 3$ real symmetric matrix, and so, $\phi(\mathcal{M},t)$ has real roots. Since $u$ and $v$ are twins in $X\vee H$, Theorem \ref{twineq} implies that $\textbf{e}_1-\textbf{e}_2$ is an eigenvector for $\mathcal{M}$ associated to $\theta$ given in (\ref{eval}). If the entries of $\mathcal{M}$ satisfy (\ref{M}), then $\textbf{e}_1-\textbf{e}_3$ is also an eigenvector for $\mathcal{M}$ associated to the eigenvalue $\theta$, which implies that $\{u\}$ and $\{v\}$ are not parallel, and hence strongly cospectral with respect to $\mathcal{M}$. On the other hand, if (\ref{M}) is violated, then we are guaranteed that the two remaining eigenvalues of $\mathcal{M}$ are conjugates with eigenvectors of the form $[\pm c_1,\pm c_2, 1]^T$. Thus, $\{u\}$ and $\{v\}$ are strongly cospectral with respect to $\mathcal{M}$, and applying Theorem \ref{epsc} yields the desired result.
\end{proof}

Consider the joins in Figure \ref{join}. Since all vertices in $K_3$, $O_3$ and $O_4$ are pairwise twins, Theorem \ref{dc2}(\ref{dc3}) implies that no pair of vertices in $K_3\vee O_3$ and $O_3\vee O_4$ are strongly cospectral with respect to $\textbf{A}$.

In Theorem \ref{dc2}, if $n=1$ and $n=2$, then we call $X\vee H$ a \textit{cone} and a \textit{double cone} on $H$, respectively, and we call the vertices of $X$ as the \textit{apexes} of $X\vee H$. In particular, we call $\textbf{K}_2(\omega,\eta)\vee H$ the \textit{connected} double cone on $H$, and $\textbf{O}_2(\omega)\vee H$ the \textit{disconnected} double cone on $H$.

With the assumption in Theorem \ref{dc2}(\ref{wawa1}), we further assume that $H$ is $d$-regular and each edge between $u$ and any $v\in V(H)$ has weight $\delta$. Then one checks that
\begin{equation*}
\operatorname{tr}(\textbf{A}(X\vee H)[u])=\alpha m+\beta m(d+\delta)+\gamma\left(\sum_{w\in V(H)}(A)_{w,w}\right),
\end{equation*}
and a similar calculation yields
\begin{equation*}
\operatorname{tr}(\textbf{A}(X\vee H)[v])=\alpha m+\beta m\delta+\beta(m-1)(d+\delta)+\gamma\left((A)_{u,u}+\sum_{w\in V(H)\backslash\{v\}}(A)_{w,w}\right),
\end{equation*}
for some $v\in V(H)$. Thus, $\operatorname{tr}(\textbf{A}(X\vee H)[u])=\operatorname{tr}(\textbf{A}(X\vee H)[v])$ if and only if
\begin{equation}
\label{wawa}
\beta[d+\delta(1-m)]+\gamma((A)_{v,v}-(A)_{u,u})=0.
\end{equation}

This yields our next result about cones.

\begin{corollary}
\label{cone}
With the assumption in Theorem \ref{dc2}(\ref{wawa1}), we further assume that $H$ is $d$-regular and each edge between $u$ and any $v\in V(H)$ has weight $\delta$. If (\ref{wawa}) does not hold for some $v\in H$, then $u$ and $v$ are not strongly cospectral with respect to $\textbf{A}(X\vee H)$. If we add that $X\vee H$ is simple, the following hold.
\begin{enumerate}
\item \label{cone1} Let $\beta\neq 0$. If $d\neq \delta(m-1)$, then any two vertices in $X\vee H$ are not strongly cospectral with respect to $\textbf{A}(X\vee H)$.
\item \label{cone2} Let $\beta=0$ and $v\in V(H)$. If $\operatorname{deg}(w)\neq \delta(m-1)$ for some $w\in V(X\vee H\backslash{v})$, then $u$ and $v$ are not strongly cospectral with respect to $\textbf{A}(X\vee H)$. In particular, if $H\backslash v$ is $d'$-regular and $d'\neq \delta(m-2)$, then $u$ and $v$ are not strongly cospectral with respect to $\textbf{A}(X\vee H)$.
\end{enumerate}
\end{corollary}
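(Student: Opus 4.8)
The plan is to reduce every assertion to the failure of \emph{cospectrality} between $u$ and the relevant vertex, since strong cospectrality implies cospectrality by Theorem~\ref{sciffcp}, and to detect non-cospectrality through a spectral invariant of the principal submatrices $\textbf{A}(X\vee H)[u]$ and $\textbf{A}(X\vee H)[v]$. Indeed, $u$ and $v$ are cospectral with respect to $\textbf{A}$ exactly when $\phi_u(\textbf{A},t)=\phi_v(\textbf{A},t)$, that is, when these two submatrices are isospectral; hence any power-sum trace that distinguishes them already rules out cospectrality. The opening sentence is then immediate: the trace computation displayed just before the corollary shows, under the present hypotheses on $H$ and $\delta$, that (\ref{dc}) is equivalent to (\ref{wawa}); so if (\ref{wawa}) fails for some $v$ then (\ref{dc}) fails, and Theorem~\ref{dc2}(\ref{wawa1}) gives that $u$ and $v$ are not strongly cospectral.

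For part~(\ref{cone1}) I would specialize (\ref{wawa}) to the simple case. When $X\vee H$ is simple the diagonal of $A$ vanishes, so $(A)_{u,u}=(A)_{v,v}=0$ and (\ref{wawa}) collapses to $\beta\,[\,d-\delta(m-1)\,]=0$. With $\beta\neq 0$ and $d\neq\delta(m-1)$ this fails for every $v\in V(H)$, so $u$ is not strongly cospectral with any vertex of $H$. Equivalently, the apex has weighted degree $\operatorname{deg}(u)=m\delta$ while every $v\in V(H)$ has $\operatorname{deg}(v)=d+\delta$, and $m\delta\neq d+\delta$; since cospectral vertices of a simple graph must have equal degree when $\beta\neq 0$ (Proposition~\ref{deg}(1) together with the remark following it), no $v\in V(H)$ can be cospectral, hence strongly cospectral, with $u$.

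For part~(\ref{cone2}) the first-order trace is uninformative: with $\beta=0$ every diagonal entry of $\textbf{A}=\alpha I+\gamma A$ equals $\alpha$, so $\operatorname{tr}(\textbf{A}[u])=\operatorname{tr}(\textbf{A}[v])$ holds automatically. Instead I would exploit that deleting $u$ leaves $H$ while deleting $v$ leaves the cone $\{u\}\vee(H\backslash v)$, so that, after removing the invertible affine shift $\textbf{A}=\alpha I+\gamma A$, cospectrality of $u$ and $v$ would force $A(H)$ and $A(\{u\}\vee(H\backslash v))$ to share the same characteristic polynomial. Now $H$ is $d$-regular, and I would invoke that regularity is detected by the adjacency spectrum ($\textbf{1}$ being an eigenvector, equivalently the spectral radius equalling the average degree), so any graph cospectral to $H$ must itself be regular. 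In $\{u\}\vee(H\backslash v)$ the apex has degree $\delta(m-1)$; if $\operatorname{deg}(w)\neq\delta(m-1)$ for some $w$, this graph is not regular, hence not cospectral to $H$, and $u,v$ are not strongly cospectral. The ``in particular'' clause follows by a degree count in $\{u\}\vee(H\backslash v)$: when $H\backslash v$ is $d'$-regular each non-apex vertex has degree $d'+\delta$, which matches the apex degree $\delta(m-1)$ if and only if $d'=\delta(m-2)$, so $d'\neq\delta(m-2)$ yields non-regularity.

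The routine content is the degree and trace bookkeeping. The one genuinely structural step is in part~(\ref{cone2}): the argument rests on regularity (and its defining parameter) being a spectral invariant, which is cleanest for unweighted graphs and must be handled with care if $X\vee H$ carries edge weights, where ``constant row sum'' is not literally a spectral condition; the correct reading tracks the apex degree $\delta(m-1)$ appearing after the deletion. I would also flag that the opening sentence and part~(\ref{cone1}) concern only the apex $u$ versus the remaining vertices, as strong cospectrality \emph{within} $V(H)$ is inherited from $H$ and is not excluded by these hypotheses.
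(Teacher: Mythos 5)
Your proof follows essentially the same route as the paper: the opening claim and part (\ref{cone1}) by specializing the trace condition (\ref{wawa}) through Theorem \ref{dc2}(\ref{wawa1}), and part (\ref{cone2}) by comparing $\phi(A(H))$ with $\phi(A(X\vee(H\backslash v)))$ after deleting $u$ and $v$ and using that regularity of $H$ is detected by the adjacency spectrum. The two caveats you flag --- that the argument only rules out strong cospectrality between the apex and vertices of $H$, not between two vertices of $H$, and that the ``regularity is a spectral invariant'' step requires care once edge weights are allowed --- are apt, and apply equally to the paper's own proof.
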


\begin{proof}
A direct application of Theorem \ref{dc2}(\ref{wawa1}) to (\ref{wawa}) yields statement \ref{cone1}. To prove \ref{cone2}, let $\beta=0$ and without loss of generality, assume that $\alpha=0$. Note that $\phi_z(A(X\vee H))=\phi(A((X\vee H)\backslash z))$ for any $z\in V(X\vee H)$. Now, $\phi(A((X\vee H)\backslash v))=\phi(A(X\vee (H\backslash v)$ for $v\in V(H)$ while $\phi(A((X\vee H)\backslash u))=\phi(A(H))$. Observe that $\operatorname{deg}(u)=\delta(m-1)$ in $X\vee H\backslash{v}$. Thus, if $\operatorname{deg}(w)\neq \delta(m-1)$ for some $w\in V(X\vee H\backslash{v})$, then $\phi(A(X\vee (H\backslash v))\neq \phi(A(H))$ because $H$ is $d$-regular. By Proposition \ref{cosp}, we conclude that $u$ and $v$ not strongly cospectral with respect to $A(X\vee H)$. The latter statement in \ref{cone2} follows immediately.
\end{proof}

With the assumption in Corollary \ref{cone}, let us further suppose that $X\vee H$ is simple and unweighted. By Corollary \ref{cone}, whether $\beta=0$ or $\beta\neq 0$, any two vertices of $X\vee H$ do not exhibit strong cospectrality with respect to $\textbf{A}(X\vee H)$ whenever $d\neq m-1$. However, if $d=m-1$, then $X\vee H\cong K_{m+1}$, which also does not exhibit strong cospectrality with respect to $\textbf{A}(X\vee H)$ by Example \ref{km}. This yields the following result.

\begin{corollary}
\label{conesimpunw}
Any two vertices of a simple unweighted cone on a regular graph do not exhibit strong cospectrality with respect to $\textbf{A}(X\vee H)$.
\end{corollary}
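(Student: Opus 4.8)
The plan is to prove the corollary by a dichotomy on the regularity degree $d$ of $H$ relative to $m-1$, reducing each case to a result already in hand. Throughout, write $X\cong \textbf{O}_1(0)$ with apex $u$; since $X\vee H$ is simple and unweighted, every edge joining $u$ to a vertex of $H$ has weight $\delta=1$, and $X\vee H$ has $m+1$ vertices. The hypotheses of Corollary \ref{cone} (namely that $H$ is $d$-regular and that the apex edges share a common weight) are then in force, and the argument splits according to whether Corollary \ref{cone} is applicable.

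First I would treat the boundary case $d=m-1$, where the hypothesis $d\neq m-1$ of Corollary \ref{cone} fails. The only $(m-1)$-regular simple graph on $m$ vertices is $K_m$, so $X\vee H\cong K_1\vee K_m\cong K_{m+1}$. Provided $m+1\geq 3$, Example \ref{km} shows that no two vertices of $K_{m+1}$ are strongly cospectral with respect to $M$, hence with respect to $\textbf{A}$, which disposes of this case.

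For $d\neq m-1$ I would invoke Corollary \ref{cone} directly. When $\beta\neq 0$, the hypothesis $d\neq\delta(m-1)=m-1$ is exactly what Corollary \ref{cone}(\ref{cone1}) requires, so no two vertices of $X\vee H$ are strongly cospectral. When $\beta=0$ I would lean on the cospectrality criterion underlying Corollary \ref{cone}(\ref{cone2}): since $\phi_u(A(X\vee H),t)=\phi(A(H),t)$ while $\phi_w(A(X\vee H),t)=\phi(A(X\vee(H\backslash w)),t)$ for $w\in V(H)$, a comparison of $\operatorname{tr}(A^2)$ forces equality of these two characteristic polynomials only when $d=m-1$, so for $d\neq m-1$ the apex fails to be cospectral, and a fortiori strongly cospectral, with any vertex of $H$.

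The step I expect to be the main obstacle is guaranteeing that the conclusion covers \emph{every} pair of vertices, not only those involving the apex. For $\beta\neq 0$ this is subsumed in the statement of Corollary \ref{cone}(\ref{cone1}), but for $\beta=0$ the trace argument controls only the apex-versus-$H$ pairs, so one must separately handle strong cospectrality between two vertices lying in $V(H)$. This is the delicate point, because twin vertices of $H$ (say a pair of false twins) remain twins in the cone, so by Corollary \ref{strcospchar} one must verify whether such a pair can be strongly cospectral; settling this under the regularity hypothesis is the crux of the argument, and is where I would concentrate the remaining work before assembling the two cases.
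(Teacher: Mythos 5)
Your two-case skeleton is exactly the paper's: for $d\neq m-1$ it cites Corollary \ref{cone}, and for $d=m-1$ it observes $X\vee H\cong K_{m+1}$ and invokes Example \ref{km}. Your treatment of the apex-versus-$H$ pairs is sound, and your $\operatorname{tr}(A^2)$ comparison for $\beta=0$ (an edge count giving $md$ against $md+2(m-1-d)$) is in fact a cleaner route than the degree condition in Corollary \ref{cone}(\ref{cone2}). But the proof you submit is incomplete at precisely the point you flag: you never dispose of pairs of vertices both lying in $V(H)$, and for $\beta=0$ nothing you wrote constrains such pairs. Deferring the crux as ``remaining work'' leaves a genuine gap.

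Moreover, that gap cannot be closed, because for within-$H$ pairs the literal statement is false. Take $H=C_4$ with rim $a,b,c,d$ in cyclic order, so $X\vee H$ is the wheel $W_4$, a simple unweighted cone on a $2$-regular graph with $d=2\neq 3=m-1$. The vertices $a$ and $c$ are false twins in $W_4$; the $0$-eigenspace of $A(W_4)$ is spanned by the orthogonal vectors $\textbf{e}_a-\textbf{e}_c$ and $\textbf{e}_b-\textbf{e}_d$, and the latter vanishes on both $a$ and $c$, so Corollary \ref{strcospchar} makes $a$ and $c$ strongly cospectral with respect to $A$ (the analogous computation works for $L$, so leaning on the ``any two vertices'' phrasing of Corollary \ref{cone}(\ref{cone1}) does not rescue the $\beta\neq 0$ branch either: the proof of that corollary, via (\ref{wawa}) and Theorem \ref{dc2}(\ref{wawa1}), only controls pairs containing the apex). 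The paper's one-line derivation inherits this overreach. The statement that is actually true, and that your argument does establish once the $K_{m+1}$ case is folded in, is that the \emph{apex} of a simple unweighted cone on a regular graph is not strongly cospectral with any other vertex. You were right to identify the within-$H$ pairs as the obstacle; the resolution is to restrict the claim, not to search for a missing lemma.
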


With the assumption in Theorem \ref{dc2}(\ref{yes}), we further suppose that $\operatorname{deg}(w)-(A)_{w,w}$ is a constant $d$ for all $w\in V(H)$ so that $\pi$ is equitable. Making use of the entries of $\mathcal{M}$ in (\ref{eq}), we can write (\ref{M}) as
\begin{equation}
\label{scdoubcone}
\eta\left[(\beta+\gamma)(\omega-d)+\beta\left(\eta+(m-2)\delta+\omega-\frac{1}{m}\sum_{w\in H}(A)_{w,w}\right)-\gamma\eta\right]+\gamma\delta^2m=0.
\end{equation}
If $\eta=0$, then $\gamma\delta^2m=0$, which not true. Hence, by Theorem \ref{dc2}(\ref{yes}), the apexes of $X\vee H$ are strongly cospectral with respect to $\textbf{A}$. Now, suppose $\eta\neq 0$. If $X\vee H$ is simple, then $d=\operatorname{deg}(w)$ is constant for all $w\in V(H)$. In other words, $H$ is a weighted $d$-regular graph, and we can write (\ref{scdoubcone}) as
\begin{equation}
\label{scdoubcone2}
-d(\beta+\gamma)+\beta\left(\eta+(m-2)\delta\right) +\gamma\left(\frac{\delta^2m}{\eta}-\eta\right)=0.
\end{equation}
This yields the following corollary to Theorem \ref{dc2}(\ref{yes}).

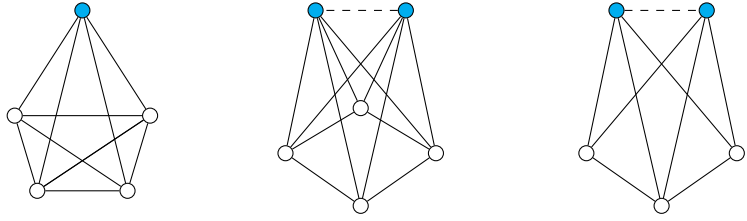
\begin{figure}[h!]
	\begin{center}
		\begin{tikzpicture}
		\tikzset{enclosed/.style={draw, circle, inner sep=0pt, minimum size=.2cm}}
	   
	   \node[enclosed] (u_1) at (-0.1,0.5) {};
		\node[enclosed] (u_2) at (0.2,-0.5) {};
		\node[enclosed] (u_3) at (1.7,0.5) {};
		\node[enclosed] (u_4) at (1.4,-0.5) {};
		\node[enclosed, fill=cyan] (u_5) at (0.8,1.9) {};
		
		\draw (u_1) --  (u_2);
		\draw (u_1) --  (u_3);
		\draw (u_1) --  (u_4);
		\draw (u_2) --  (u_3);
		\draw (u_2) --  (u_3);
		\draw (u_2) --  (u_4);
		\draw (u_3) --  (u_4);
		\draw (u_5) --  (u_1);
		\draw (u_5) --  (u_2);
		\draw (u_5) --  (u_3);
		\draw (u_5) --  (u_4);
		
		\node[enclosed] (v_1) at (3.5,0) {};
		\node[enclosed] (v_2) at (4.5,0.6) {};
		\node[enclosed] (v_3) at (5.5,0) {};
		\node[enclosed] (v_4) at (4.5,-0.7) {};
		\node[enclosed, fill=cyan] (v_5) at (3.9,1.9) {};
		\node[enclosed, fill=cyan] (v_6) at (5.1,1.9) {};
		
		\draw (v_1) --  (v_2);
		\draw (v_2) --  (v_3);
		\draw (v_3) --  (v_4);
		\draw (v_1) --  (v_4);
		\draw (v_5) --  (v_1);
		\draw (v_5) --  (v_2);
		\draw (v_5) --  (v_3);
		\draw (v_5) --  (v_4);
		\draw (v_6) --  (v_1);
		\draw (v_6) --  (v_2);
		\draw (v_6) --  (v_3);
		\draw (v_6) --  (v_4);
		\draw 
		[dashed] (v_6) --  (v_5);
		
		\node[enclosed] (w_1) at (7.5,0) {};
		\node[enclosed] (w_2) at (8.5,-0.7) {};
		\node[enclosed] (w_3) at (9.5,0) {};
		\node[enclosed, fill=cyan] (w_4) at (7.9,1.9) {};
		\node[enclosed, fill=cyan] (w_5) at (9.1,1.9) {};
		
		\draw (w_1) --  (w_2);
		\draw (w_3) --  (w_2);
		\draw (w_4) --  (w_1);
		\draw (w_4) --  (w_2);
		\draw (w_4) --  (w_3);
		\draw (w_5) --  (w_1);
		\draw (w_5) --  (w_2);
		\draw (w_5) --  (w_3);
		\draw [dashed] (w_4) --  (w_5);
		
		\end{tikzpicture}
	\end{center}
	\caption{A cone on $K_4$ (left), a double cone on $C_4$ (center) and a double cone on $P_3$ (right)}\label{DC}
\end{figure}

\begin{corollary}
\label{dcsimp}
With the assumption in Theorem \ref{dc2}(\ref{yes}), we further suppose that $\operatorname{deg}(w)-(A)_{w,w}$ is a constant $d$ for all $w\in V(H)$. 
\begin{enumerate}
\item If $\eta=0$, then the apexes of $X\vee H$ are strongly cospectral with respect to $\textbf{A}(X\vee H)$.
\item \label{dcsimpweight} Let $\eta\neq 0$ and $X\vee H$ be simple. The apexes of $X\vee H$ are strongly cospectral with respect to $\textbf{A}(X\vee H)$ if and only if (\ref{scdoubcone2}) does not hold. In particular, if $\beta\neq -\gamma$ and $X\vee H$ is simple and unweighted, then the apexes of $X\vee H$ are strongly cospectral with respect to $\textbf{A}(X\vee H)$ if and only if $d\neq m-1$.
\end{enumerate}
\end{corollary}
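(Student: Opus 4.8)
The entire corollary rests on Theorem~\ref{dc2}(\ref{yes}), which under the standing equitable hypothesis asserts that the apexes $u$ and $v$ are strongly cospectral with respect to $\textbf{A}(X\vee H)$ if and only if the entries of $\mathcal{M}$ fail to satisfy (\ref{M}). Since the discussion preceding the corollary already rewrites (\ref{M}) in the explicit scalar form (\ref{scdoubcone}) using the entries of $\mathcal{M}$ in (\ref{eq}), the plan is simply to specialize (\ref{scdoubcone}) in each of the two regimes and to decide when it can hold. I would also note at the outset that the added hypothesis $\operatorname{deg}(w)-(A)_{w,w}\equiv d$ is exactly what makes $\pi$ equitable, so that Theorem~\ref{dc2}(\ref{yes}) is applicable throughout.

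For part 1, I would set $\eta=0$ in (\ref{scdoubcone}). Every summand carrying the factor $\eta$ then vanishes, leaving the single relation $\gamma\delta^2 m=0$. This cannot hold: $\gamma\neq 0$ by the definition of the generalized adjacency matrix, $\delta\neq 0$ because the edges joining $X$ to $H$ have nonzero weight, and $m\geq 1$. Hence (\ref{scdoubcone}), and therefore (\ref{M}), is never satisfied, so Theorem~\ref{dc2}(\ref{yes}) forces $u$ and $v$ to be strongly cospectral. This disposes of the disconnected double cone case.

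For part 2, assume $\eta\neq 0$ and that $X\vee H$ is simple. Simplicity removes all loops, so $\omega=0$ and $(A)_{w,w}=0$ for every $w$; consequently the constant $d=\operatorname{deg}(w)-(A)_{w,w}$ reduces to $\operatorname{deg}(w)$ (so $H$ is weighted $d$-regular) and the loop-sum term $\frac{1}{m}\sum_{w\in H}(A)_{w,w}$ drops out of (\ref{scdoubcone}). Dividing the surviving relation by $\eta\neq 0$ yields exactly (\ref{scdoubcone2}), and Theorem~\ref{dc2}(\ref{yes}) then gives strong cospectrality precisely when (\ref{scdoubcone2}) fails. For the unweighted specialization I would substitute $\eta=\delta=1$ into (\ref{scdoubcone2}); the expression collapses to $-(\beta+\gamma)d+(\beta+\gamma)(m-1)=(\beta+\gamma)(m-1-d)$. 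Since $\beta\neq-\gamma$ gives $\beta+\gamma\neq 0$, this vanishes if and only if $d=m-1$, so the apexes are strongly cospectral exactly when $d\neq m-1$.

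The computations here are entirely routine once the reduction to (\ref{scdoubcone}) is in hand, so there is no genuine obstacle; the points demanding care are the non-vanishing checks ($\gamma\neq 0$, $\delta\neq 0$, $m\geq 1$, and $\beta+\gamma\neq 0$ in the final clause) and the observation that simplicity is precisely what collapses the loop-dependent terms and promotes $H$ to a $d$-regular graph. If anything is delicate, it is only the bookkeeping needed to confirm that (\ref{scdoubcone2}) is the faithful image of (\ref{scdoubcone}) after these substitutions, and that the clean factorization $(\beta+\gamma)(m-1-d)$ is correct, both of which are straightforward to verify directly.
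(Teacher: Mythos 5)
Your proposal is correct and follows essentially the same route as the paper: the paper likewise obtains the corollary by specializing the scalar form (\ref{scdoubcone}) of condition (\ref{M}) — noting that $\eta=0$ reduces it to the impossible $\gamma\delta^2m=0$, that simplicity collapses it to (\ref{scdoubcone2}), and that the unweighted case factors as $(\beta+\gamma)(m-1-d)$. Your explicit non-vanishing checks ($\gamma\neq 0$, $\delta\neq 0$, $m\geq 1$, $\beta+\gamma\neq 0$) are exactly the points the paper relies on implicitly.
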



Thus, the apexes of a simple connected double cone $X\vee H$ in Theorem \ref{dcsimp}(\ref{dcsimpweight}) are adjacency strongly cospectral if and only if $d\neq \frac{1}{\eta}\left(\delta^2m-\eta^2\right)$, while they are signless Laplacian strongly cospectral if and only if $d\neq \frac{1}{2}\delta\left[(m-2)+\frac{\delta m}{\eta}\right]$. Lastly, with the assumption in Theorem \ref{dc2}(\ref{yes}), we further let $\beta=-\gamma$. Then $\pi$ is almost equitable, and we can write (\ref{M}) as
\begin{equation}
\label{scdoubcone1}
\eta\left[2\eta+(m-2)\delta+\omega-\frac{1}{m}\sum_{w\in H}(A)_{w,w}\right]-\delta^2m=0,
\end{equation}
which yields another corollary to Theorem \ref{dc2}(\ref{yes}).

\begin{corollary}
\label{dcsimp1}
With the assumption in Theorem \ref{dc2}(\ref{yes}), we further let $\beta=-\gamma$.
\begin{enumerate}
\item If $\eta=0$, then the apexes of $X\vee H$ are strongly cospectral with respect to $\textbf{A}(X\vee H)$.
\item \label{dcsimpweight1} Assume $\eta\neq 0$. Then the apexes of $X\vee H$ are strongly cospectral with respect to $\textbf{A}(X\vee H)$ if and only if (\ref{scdoubcone1}) does not hold. In particular, if $X\vee H$ is simple, the apexes of $X\vee H$ are strongly cospectral with respect to $\textbf{A}(X\vee H)$ if and only if $\eta\neq \delta$ and $m\neq -\frac{2\eta}{\delta}$.
\end{enumerate}
\end{corollary}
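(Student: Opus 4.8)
The plan is to read off both parts of the corollary directly from Theorem \ref{dc2}(\ref{yes}), using the fact---established in the paragraph immediately preceding the statement---that under the hypothesis $\beta=-\gamma$ condition (\ref{M}) is equivalent to (\ref{scdoubcone1}). Since Theorem \ref{dc2}(\ref{yes}) asserts that the apexes $u$ and $v$ are strongly cospectral with respect to $\textbf{A}(X\vee H)$ if and only if (\ref{M}) fails, the whole argument reduces to deciding when (\ref{scdoubcone1}) holds in each stated regime. The equivalence (\ref{M})$\Leftrightarrow$(\ref{scdoubcone1}) itself comes from substituting the entries of $\mathcal{M}=\mathcal{M}(X\vee H/\pi)$ into (\ref{M}); I would compute these from (\ref{eq}) using the partition data $d_{1,2}=\eta$, $d_{1,3}=m\delta$, $d_{3,1}=\delta$ (and their symmetric counterparts) together with the loop weights $(A)_{u,u}=(A)_{v,v}=\omega$, specialized to $\beta=-\gamma$.

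For part 1, I would set $\eta=0$ in (\ref{scdoubcone1}). Every term carrying a factor of $\eta$ vanishes, leaving the equation $-\delta^2 m=0$. Because each edge joining $X$ to $H$ has nonzero weight $\delta$ and $m\geq 1$, this cannot hold; hence (\ref{scdoubcone1})---equivalently (\ref{M})---fails, and Theorem \ref{dc2}(\ref{yes}) yields strong cospectrality of the apexes. This is precisely the case $X\cong\textbf{O}_2(\omega)$, the disconnected double cone.

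For part 2 with $\eta\neq 0$, the first assertion is immediate from Theorem \ref{dc2}(\ref{yes}) combined with the equivalence above. For the simple case I would impose $\omega=0$ and $(A)_{w,w}=0$ for all $w\in V(H)$, so that (\ref{scdoubcone1}) collapses to the quadratic $2\eta^2+(m-2)\delta\eta-m\delta^2=0$. The key elementary step is the factorization $2\eta^2+(m-2)\delta\eta-m\delta^2=(2\eta+m\delta)(\eta-\delta)$, which shows (\ref{scdoubcone1}) holds exactly when $\eta=\delta$ or $m=-2\eta/\delta$. Negating, the apexes are strongly cospectral if and only if $\eta\neq\delta$ and $m\neq-2\eta/\delta$, as claimed.

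The only genuinely delicate point is the entry computation establishing (\ref{M})$\Leftrightarrow$(\ref{scdoubcone1}); everything afterward is elementary algebra. Two features keep this step clean rather than hard. First, since $\beta+\gamma=0$, the term $(\beta+\gamma)d_{j,j}$ in the diagonal of (\ref{eq}) disappears, so the (possibly non-constant) within-cell quantity $d_{3,3}$ never enters---this is exactly why an almost equitable $\pi$ suffices and no regularity of $H$ is required. Second, I must track the sign of the off-diagonal entry $(\mathcal{M})_{1,2}$ so that it appears as $\gamma\eta$, consistent with the eigenvalue $\theta$ of (\ref{eval}) carried by $\textbf{e}_u-\textbf{e}_v$; this matters because $(\mathcal{M})_{1,2}$ enters (\ref{M}) linearly, so its sign governs the final form of (\ref{scdoubcone1}). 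Once these two points are handled, the substitution into (\ref{M}) produces (\ref{scdoubcone1}) after dividing by the nonzero factor $\gamma^2$.
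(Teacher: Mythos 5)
Your proposal is correct and follows essentially the same route as the paper: the paper derives (\ref{scdoubcone1}) from (\ref{M}) under $\beta=-\gamma$ in the paragraph preceding the corollary, and the corollary then follows from Theorem \ref{dc2}(\ref{yes}) by exactly the algebra you describe, including the factorization $2\eta^2+(m-2)\delta\eta-m\delta^2=(2\eta+m\delta)(\eta-\delta)$ in the simple case. Your entry computation for $\mathcal{M}(X\vee H/\pi)$ and the observation that $(\beta+\gamma)d_{j,j}$ drops out (so only almost-equitability is needed) match the paper's setup.
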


Combining Corollaries \ref{dcsimp} and \ref{dcsimp1} yields the following result about simple unweighted double cones.

\begin{corollary}
\label{cor}
Let $X\vee H$ be a simple unweighted double cone on a graph $H$ with $m$ vertices.
\begin{enumerate}
\item \label{cor1} Assume $H$ is $d$-regular. If $X\vee H$ is a disconnected double cone, then its apexes are strongly cospectral with respect to $\textbf{A}(X\vee H)$. If $X\vee H$ is a connected double cone and $\beta\neq -\gamma$, then the apexes of $X\vee H$ are strongly cospectral with respect to $\textbf{A}(X\vee H)$ if and only if $d\neq m-1$ (i.e., $H\not\cong K_m$).
\item \label{cor3} Let $\beta=-\gamma$. If $X\vee H$ is a disconnected double cone, then its apexes are strongly cospectral with respect to $\textbf{A}(X\vee H)$. If $X\vee H$ is a connected double cone, then its apexes are not strongly cospectral with respect to $\textbf{A}(X\vee H)$.
\end{enumerate}
\end{corollary}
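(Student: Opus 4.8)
The plan is to obtain both assertions by specializing Corollaries \ref{dcsimp} and \ref{dcsimp1} to the simple unweighted situation. Here every edge carries weight one, so in the notation of Theorem \ref{dc2}(\ref{yes}) we have $\omega=0$ and $\delta=1$, while $\eta=0$ for a disconnected double cone (the apexes are nonadjacent) and $\eta=1$ for a connected double cone (the apexes are adjacent). In each configuration I will simply check the strong-cospectrality condition supplied by the appropriate corollary.

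First I would dispatch the disconnected double cones, where $\eta=0$. For statement \ref{cor1} we assume $H$ is $d$-regular, which is exactly the hypothesis ($\operatorname{deg}(w)-(A)_{w,w}$ constant) that renders the partition $\pi$ of Theorem \ref{dc2}(\ref{yes}) equitable, so Corollary \ref{dcsimp}(1) applies directly and the apexes are strongly cospectral with respect to $\textbf{A}$. For statement \ref{cor3} we instead have $\beta=-\gamma$, so $\pi$ is almost equitable and Corollary \ref{dcsimp1}(1) applies, again giving strong cospectrality. No computation beyond invoking these corollaries is needed in either case.

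Next I would treat the connected double cones, where $\eta=\delta=1$. For statement \ref{cor1} with $\beta\neq-\gamma$ and $H$ a $d$-regular graph, Corollary \ref{dcsimp}(\ref{dcsimpweight}) states that the apexes are strongly cospectral if and only if (\ref{scdoubcone2}) fails. Substituting $\eta=\delta=1$ collapses (\ref{scdoubcone2}) to $(\beta+\gamma)(m-1-d)=0$; since $\beta+\gamma\neq 0$, this equality holds precisely when $d=m-1$, and a $d$-regular graph on $m$ vertices is complete exactly when $d=m-1$. Hence the apexes are strongly cospectral if and only if $d\neq m-1$, i.e., $H\not\cong K_m$, as claimed. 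For statement \ref{cor3} with $\beta=-\gamma$, Corollary \ref{dcsimp1}(\ref{dcsimpweight1}) states that the apexes are strongly cospectral if and only if $\eta\neq\delta$ and $m\neq-2\eta/\delta$; but here $\eta=\delta$, so the first clause already fails and the apexes are not strongly cospectral.

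There is no genuine obstacle in this argument, since the corollary is a bookkeeping specialization of the two preceding ones. The only points that require care are correctly reading off the parameter values $\omega=0$, $\delta=1$, $\eta\in\{0,1\}$ for the two types of double cone, confirming that $d$-regularity of $H$ is precisely what makes $\pi$ equitable so that Corollary \ref{dcsimp} is available in statement \ref{cor1}, and carrying out the one-line reduction of (\ref{scdoubcone2}) to $(\beta+\gamma)(m-1-d)=0$ together with the observation $d=m-1\iff H\cong K_m$.
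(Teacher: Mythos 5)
Your proposal is correct and follows exactly the route the paper intends: the paper introduces Corollary \ref{cor} with the remark that it is obtained by combining Corollaries \ref{dcsimp} and \ref{dcsimp1}, and your parameter substitutions ($\omega=0$, $\delta=1$, $\eta\in\{0,1\}$), the reduction of (\ref{scdoubcone2}) to $(\beta+\gamma)(m-1-d)=0$, and the use of $\eta=\delta$ to kill the connected case under $\beta=-\gamma$ are precisely the intended bookkeeping.
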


The cone on $K_4$ does not exhibit adjacency, Laplacian, and signless Laplacian strong cospectrality by Corollary \ref{conesimpunw} while the apexes of the simple unweighted connected and disconnected double cones on $C_4$ are adjacency and signless Laplacian strongly cospectral by Corollary \ref{cor}(\ref{cor1}) (see Figure \ref{DC}). Moreover, Corollary \ref{cor}(\ref{cor3}) implies that the apexes of the simple unweighted disconnected double cone on $P_3$ (see Figure \ref{DC}) are Laplacian strongly cospectral, which is not the case if the apexes of this double cone are connected.


{\bf Acknowledgment.} H.M.\ is supported by the University of Manitoba's Faculty of Science and Faculty of Graduate Studies. Some of the work in this paper was completed as part of the author's MSc. thesis at the University of Manitoba. H.M.\ would like to thank the referees for the careful review and useful suggestions. H.M.\ would also like to thank Steve Kirkland and Sarah Plosker for the research guidance. H.M.\ also acknowledges the helpful comments from Chris Godsil and Gabriel Coutinho.


\end{document}